\documentclass{amsart}
\usepackage{amsmath,amsthm,amsfonts,amssymb,latexsym,mathrsfs,graphicx}

\usepackage{hyperref}

\usepackage{enumerate}
\usepackage[shortlabels]{enumitem}
\usepackage{color}

\usepackage{tikz}
\usetikzlibrary{decorations.pathreplacing}

\usepackage{ytableau}
\newtheorem{thm}{Theorem}[section]

\newtheorem{pro}[thm]{Proposition}

\newtheorem{rem}[thm]{Remark}
\newtheorem{defi}[thm]{Definition}
\newtheorem{lem}[thm]{Lemma}
\newtheorem{core}[thm]{Corollary}

\def \leq {\leqslant}
\def \geq {\geqslant}

\def\exp{{\sf exp}}
\def\Aut{{\sf Aut}}

\def\Cay{{\sf Cay}}

\let\oldproofname=\proofname
\renewcommand{\proofname}{\rm\bf{\oldproofname}}

\begin{document}

\title{Finite groups with quadratic splitting fields for all Cayley graphs}
	\author[M. Arezoomand]{Majid Arezoomand$^*$}
\author[A. Abdollahi]{Alireza Abdollahi}
\author[T. Feng]{Tao Feng}

	\address{Majid Arezoomand, Department of Mathematics, Faculty of Sciences, Shahid Rajaee Teacher Training University, Tehran 16785-163, Irann\newline
		and \newline
		School of Mathematics,\newline
		Institute for Research in Fundamental Science(IPM)
		P.O. Box:19395-5746, Tehran, Iran.}
	\email{ arezoomand@sru.ac.ir(Coressponding author)}
\address{Alireza Abdollahi, Department of Pure Mathematics, Faculty of Mathematics and Statistics,\newline
		 University of Isfahan, Isfahan 81746-73441, Iran}
\email{a.abdollahi@math.ui.ac.ir}

\address{Tao Feng, School of Mathematics and Statistics, Beijing Jiaotong University, Beijing 100044, PR China}
	\email{tfeng@bjtu.edu.cn}

\author[Z. Akhlaghi]{Zeinab Akhlaghi}
		\address{Zeinab Akhlaghi, Faculty of Math. and Computer Sci., \newline Amirkabir University of Technology (Tehran Polytechnic), 15914 Tehran, Iran.\newline
		and \newline
		School of Mathematics,\newline
		Institute for Research in Fundamental Science(IPM)
		P.O. Box:19395-5746, Tehran, Iran.}
	\email{z\_akhlaghi@aut.ac.ir}

\thanks{$^*$ Corresponding author\\
	Part of this work was carried out while the first author was visiting Beijing Jiaotong University. He expresses his sincere thanks to the 111 Project of China (Grant No. B16002) for financial support and to the School of Mathematics and Statistics, Beijing Jiaotong University, for their kind hospitality. The research of M. Arezoomand was in part supported by a grant from IPM (No. 1405050011). T. Feng is supported by the National Natural Science Foundation of China under Grant 12271023.  Also Z. Akhlaghi is  supported by a  grant from IPM (No. 1405320014).}

\subjclass[2000]{05C25, 05C50}
\keywords{\noindent Splitting filed, $2$-integral graph, Cayley graph.}
%	\title{Cayley $2$-integral groups}
%	\author{ 
%		Majid Arezoomand $^{a,d}$ \footnote{Email: arezoomand@sru.ac.ir (corresponding author)}, Alireza Abdollahi $^{c}$ \footnote{E-mail:
%		a.abdollahi@math.ui.ac.ir },  Tao Feng $^{e}$ \footnote{Email: tfeng@bjtu.edu.cn} and Zeinab Akhlaghi $^{b,d}$ \footnote{Email:z\_akhlaghi@aut.ac.ir} \\

%		{\small\em$^a$  }\\
%{\small\em$^b$ Faculty of Mathematics and Computer Science, Amirkabir University of Technology (Tehran Polytechnic), 15914 Tehran, Iran}\\
%	{\small\em$^c$ Department of Pure Mathematics, Faculty of Mathematics and Statistics,}\\ 
%		{\small\em University of Isfahan, Isfahan 81746-73441, Iran}\\
%		{\small\em$^d$School of Mathematics, Institute for Research in Fundamental Sciences (IPM),}\\
%		{\small\em Tehran 19395-5746, Iran}\\
		
%		{\small\em$^e$ School of Mathematics and Statistics, Beijing Jiaotong University, Beijing 100044, PR China }
%	}

%	\renewcommand{\Affilfont}{\small\it}
%	\renewcommand\Authands{ and }
	\date{}
	\maketitle
	
	\begin{abstract}
		For a graph $\Gamma$, the splitting field of $\Gamma$ is defined as the splitting field of the characteristic polynomial of $\Gamma$ over rationals. The algebraic degree of $\Gamma$ is defined by the extension degree of its splitting field over rationals. 
Let $k$ be a positive integer. We call a finite group $G$  \textit{Cayley $k$-integral} if, for every inverse-closed subset $S$ of $G$, the algebraic degree of the Cayley graph $\Cay(G,S)$ does not exceed $k$. We give a complete classification of all finite Cayley $2$-integral groups. It is shown that a finite abelian group is Cayley $2$-integral if and only if it is isomorphic to one of the following forms: $G \cong \mathbb{Z}_2^r \times \mathbb{Z}_5^s$, $\mathbb{Z}_2^r \times \mathbb{Z}_4^s \times \mathbb{Z}_8^t$, or $\mathbb{Z}_2^r \times \mathbb{Z}_3^s \times \mathbb{Z}_{12}^t$, where $r, s, t \geq 0$. Furthermore, we prove that the set of finite non-abelian Cayley $2$-integral groups consists of the infinite family $Q_8 \times \mathbb{Z}_2^n$, with $n \geq 0$, and $22$ specific groups.
	\end{abstract}

\section{Introduction and main result}

The interplay between algebraic structures and graph-theoretic properties has long been a fertile area of mathematical research. One of the most influential constructions linking these two disciplines is the concept of a Cayley graph, a graphical representation of finite groups. A \textit{Cayley graph} over a group \( G \) with respect to an inverse-closed subset \( S \) of \( G \), denoted by \( \Cay(G, S) \), is a graph with vertex set \( G \) such that $\{g,h\}$ forms an edge if and only if \( hg^{-1} \in S \). A Cayley graph $\Cay(G,S)$ is connected if and only if $G=\langle S\rangle$.

During the twentieth century, Cayley graphs became central objects in algebraic graph theory. They provide a natural way to encode the structure of a group into a graph, allowing methods from linear algebra, combinatorics, and representation theory to be applied to group-theoretic problems. The spectrum of a graph, namely the set of eigenvalues of its adjacency matrix, has proven particularly important because it captures structural information about connectivity, symmetry, expansion properties, and random walks \cite{LZ}.

Given a graph $\Gamma$, a root of  the characteristic polynomial of the adjacency matrix of $\Gamma$ is called an eigenvalue of $\Gamma$.  A graph is called \textit{integral} if all of its eigenvalues are integers. Integral graphs were first systematically studied by Harary and Schwenk in the 1970s \cite{HS}, and have since attracted considerable attention due to their rarity and rich algebraic structure. The study of integral Cayley graphs, which were proposed by Abdollahi and Vatandoost \cite{AV},  naturally combines spectral graph theory with finite group theory, leading to the question of determining which groups exclusively give rise to integral Cayley graphs. This inquiry initiated the theory of Cayley integral groups, which refers to finite groups for which every undirected Cayley graph is integral.

Significant progress was achieved through the work of Klotz and Sander, who classified all finite abelian Cayley integral groups \cite{KS}. Subsequently, Abdollahi and Jazaeri \cite{AJ}, and independently Ahmady, Bell, and Mohar \cite{Azhvan}, completed the classification in the non-abelian case, providing a complete description of all finite Cayley integral groups. These results revealed that the property of being Cayley integral is highly restrictive and is satisfied by only a small collection of finite groups.

%Motivated by these classification theorems, Estélyi and Kovács \cite{EK} introduced a broader framework by examining groups whose Cayley graphs are integral solely under bounded valency conditions. For a positive integer \( k \), a finite group \( G \) belongs to the class \( \mathcal{G}_k \) if every undirected Cayley graph \( \Cay(G,S) \) with a generating set \( S \) that satisfies \( |S| \leq k \) is integral. This generalization shifts the focus from the global requirement that all Cayley graphs be integral to the more flexible condition that only those with bounded degree possess integral spectra.

%The investigation into the class \( \mathcal{G}_k \) has unveiled several surprising phenomena. Specifically, it has been demonstrated that for \( k \geq 6 \), the class \( \mathcal{G}_k \) coincides with the class of Cayley integral groups. This finding indicates that the integrality of Cayley graphs with relatively small valency imposes significant algebraic constraints on the underlying group. Moreover, the exceptional cases \( k = 4 \) and \( k = 5 \) display additional families of groups not found in the classical classification, highlighting the subtle interactions between group structure and spectral properties.

M\"{o}nius, Steuding, and Stumpf \cite{MSS} introduced the concepts of splitting fields and the algebraic degrees of graphs to explore which graph properties prevent integral eigenvalues. For a graph \( \Gamma \), its \textit{splitting field} \( \text{SF}(\Gamma) \) is defined as the smallest field extension of the rational number field \( \mathbb{Q} \) that encompasses all the eigenvalues of the adjacency matrix of \( \Gamma \). The extension degree \( [\text{SF}(\Gamma) : \mathbb{Q}] \) is termed the \textit{algebraic degree} of \( \Gamma \) and is denoted by \( \text{deg}(\Gamma) \). Since eigenvalues of graphs are all algebraic integers, a graph \( \Gamma \) is considered integral if and only if \( \text{deg}(\Gamma) = 1 \). Recently, various studies have been conducted to determine the algebraic degrees of Cayley graphs and their generalizations \cite{HLM,LM,SM,WAF,WYF}. In \cite{AAFW}, a generalization of integral graphs is proposed, whereby a graph \( \Gamma \) is regarded as \textit{\( k \)-integral} if \( \text{deg}(\Gamma) = k \). In the same paper, the authors examine \( 2 \)-integral Cayley graphs over finite groups \( G \) with respect to Cayley sets that are unions of conjugacy classes of \( G \). Among other general results, they completely classify all finite abelian groups with a connected \( 2 \)-integral Cayley graph that has valency 2, 3, 4, and 5.

In this paper, we define Cayley \( k \)-integral groups as follows:
\begin{defi}
	A finite non-identity group $G$ is called \textrm{Cayley $k$-integral}, if for every inverse-closed subset $S$ of $G\setminus\{1\}$, the algebraic degree of $\Cay(G,S)$ is at most $k$.
\end{defi}

Clearly Cayley $1$-integral groups are exactly Cayley integral groups. These groups are classified:
\begin{thm}$($\cite[Theorem 1.2]{AJ} and \cite[Theorem 4.2]{Azhvan}$)$\label{azhv}
	The only Cayley integral groups are $\Bbb Z_2^n\times\Bbb Z_3^m$, $\Bbb Z_2^n\times\Bbb Z_4^m$, $Q_8\times\Bbb Z_2^n$, $S_3$ and $\textrm{Dic}_{12}$, where $m,n$ are arbitrary non-negative integers, $Q_8$ is the quaternion group of order $8$, and $\textrm{Dic}_{12}$ is the dicyclic group of order $12$.
\end{thm}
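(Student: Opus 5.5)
The statement is quoted from \cite{AJ} and \cite{Azhvan}. If we had to reprove it, the plan is a sufficiency check followed by a necessity argument that reduces to a finite list of minimal groups to exclude. Throughout we use the standard description of the spectrum. For any $G$, the spectrum of $\Cay(G,S)$ is the union, over the irreducible complex representations $\rho$ of $G$, of the eigenvalues of $\rho(S)=\sum_{s\in S}\rho(s)$. For abelian $G$ this reduces to the character sums $\sum_{s\in S}\chi(s)$.

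\textbf{Sufficiency.} For abelian $G$, we split $S$ into pairs $\{g,g^{-1}\}$ and involutions. The character sum is then a sum of terms $2\,\mathrm{Re}\,\chi(g)=2\cos(2\pi k/o(g))$ and terms $\chi(g)=\pm1$.
\begin{itemize}
\item Each such term is rational whenever every element order lies in $\{1,2,3,4,6\}$.
\item This holds for $\Bbb Z_2^n\times\Bbb Z_3^m$ (exponent dividing $6$) and for $\Bbb Z_2^n\times\Bbb Z_4^m$ (exponent dividing $4$).
\end{itemize}
For $S_3$, $\textrm{Dic}_{12}$ and $Q_8\times\Bbb Z_2^n$, the only non-linear irreducible representations have degree $2$. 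Here we would compute $\rho(S)$ explicitly for each inverse-closed $S$. Its trace and determinant come out rational, and its discriminant comes out a perfect square. For $Q_8$ this is because $\rho(x)+\rho(x^{-1})$ is a scalar for each $x$, and a similar direct matrix computation handles the other two groups.

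\textbf{Necessity.} The class of Cayley integral groups is closed under subgroups and quotients.
\begin{itemize}
\item \emph{Subgroups:} $\Cay(H,S)$ is a union of components of $\Cay(G,S)$.
\item \emph{Quotients:} take $S$ to be a union of cosets of $N$ (minus the identity, suitably adjusted). The spectrum of $\Cay(G,S)$ then consists of $|N|$ times the eigenvalues of $\Cay(G/N,\bar S)$ together with integers.
\end{itemize}
Applying subgroup closure to cyclic subgroups forces every element order into $\{1,2,3,4,6\}$. Indeed, for $\Bbb Z_m$ the graph with $S=\{g,g^{-1}\}$ has eigenvalue $2\cos(2\pi/m)$, which is rational only for $m\in\{1,2,3,4,6\}$.

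Consequently $|G|=2^a3^b$, and $G$ is solvable by Burnside. For the abelian case, elements of order $12$ are excluded, and so are $\Bbb Z_3$ and $\Bbb Z_4$ occurring together. This leaves exactly the two abelian families.

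For the Sylow subgroups, we would classify the Cayley integral $2$-groups and $3$-groups by excluding small subgroups with explicit $S$.
\begin{itemize}
\item For $D_8=\langle a,b\rangle$, take $S=\{a,a^{-1},b\}$. The $2$-dimensional representation gives eigenvalues $\pm\sqrt{2}$.
\item For $3^{1+2}$ of exponent $3$, the $3$-dimensional representations give irrational eigenvalues for a suitable $S$.
\end{itemize}
Then we would verify that the only $2$-groups all of whose subgroups and quotients avoid the forbidden configurations are $\Bbb Z_2^n\times\Bbb Z_4^m$ and $Q_8\times\Bbb Z_2^n$. The latter uses the Hamiltonian-type argument that every non-normal subgroup leads to a $D_8$ or similar section.

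\textbf{Main obstacle.} The hardest step is the mixed $\{2,3\}$ non-abelian case. We would take a minimal counterexample, all of whose proper subgroups and proper quotients are on the list. We would then use a normal Sylow or Hall structure, coming from solvability and the restricted element orders, to reduce to semidirect products $P\rtimes Q$ of small rank. The groups that must then be excluded include $A_4$, $D_{12}\cong S_3\times\Bbb Z_2$, $S_3\times\Bbb Z_3$, $\Bbb Z_3\rtimes\Bbb Z_8$ and $\Bbb Z_3\rtimes Q_8$. For each we would exhibit an explicit $S$ whose $\rho(S)$ has irrational eigenvalues, for instance $\sqrt{3}$ or $\sqrt{5}$ in a degree-$2$ or degree-$3$ representation. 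This leaves exactly $S_3$ and $\textrm{Dic}_{12}$ among the non-nilpotent groups. The bookkeeping that this list of minimal exclusions is complete is where we expect most of the work.
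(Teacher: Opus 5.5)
The paper does not prove this theorem; it quotes it from \cite{AJ} and \cite{Azhvan}, so there is no internal argument to match. Your outline rebuilds it with the same tools the paper later uses for the $2$-integral classification: spectra of $\rho(S)$, closure under subgroups and quotients as in Lemma~\ref{sbg-qoutient}, cyclic subgroups forcing element orders into $\{1,2,3,4,6\}$, Burnside, and minimal counterexamples. The sufficiency part and the abelian part are correct.

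However, the one explicit obstruction you give for $2$-groups is wrong. Take $D_8=\langle a,b\rangle$ and $S=\{a,a^{-1},b\}$. In the $2$-dimensional representation $\rho(a)^2=\rho(a^2)=-I$, so $\rho(a)+\rho(a^{-1})=0$ and $\rho(S)=\rho(b)$ has eigenvalues $\pm1$. The linear characters give $2\chi(a)+\chi(b)\in\{\pm1,\pm3\}$. In fact $\Cay(D_8,\{a^{\pm1},b\})$ is the $3$-cube, which is integral. Your $2$-group step rests on excluding $D_8$, so you need a correct witness, and the right one uses two involutions. If $x,y$ are involutions with $o(xy)=n$, then $\Cay(\langle x,y\rangle,\{x,y\})$ is a $2n$-cycle, so Cayley integrality forces $n\leq 3$. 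For $D_8$, $S=\{b,ab\}$ gives the $8$-cycle, with eigenvalues $\pm\sqrt2$; this is $S_1$ in Lemma~\ref{dihedral}.

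The remaining steps are announced rather than proved. The ``Hamiltonian-type argument'' can be made precise using $D_8$ alone:
\begin{enumerate}
\item Let $G$ be a smallest non-Dedekind $2$-group of exponent dividing $4$ with no section isomorphic to $D_8$. Then all proper sections of $G$ are Dedekind.
\item If every involution is central and $\langle x\rangle$ is not normal, then $G/\langle x^2\rangle$ is Dedekind, which gives $\langle x\rangle\unlhd G$, a contradiction.
\item Otherwise let $x$ be an involution with $x^g\neq x$. Minimality forces $G=\langle x,g\rangle$. For every central involution $n$ it also gives $\langle x,n\rangle\unlhd G$, hence $x^g=xn$. So $n=[x,g]$ is the unique central involution, and $g^2$ is central.
\item Either $g^2=1$, and then $G=\langle x,g\rangle\cong D_8$; or $g^2=n$, so $(xg)^2=1$ and $G=\langle x,xg\rangle\cong D_8$. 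Both contradict the hypothesis.
\end{enumerate}
Hence such groups are Dedekind, so they are $\Bbb Z_2^n\times\Bbb Z_4^m$ or $Q_8\times\Bbb Z_2^n$.

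In the mixed case your list of minimal exclusions is off. $\Bbb Z_3\rtimes\Bbb Z_8$ and $\Bbb Z_3\rtimes Q_8$ need no separate treatment, since they already contain elements of order $8$ or $12$. What is missing is $G_{18,4}=\langle a,b,c\mid a^2=b^3=c^3=(ab)^2=(ac)^2=[b,c]=1\rangle$, all of whose proper sections are Cayley integral. It is what rules out $\Bbb Z_3^k\rtimes\Bbb Z_2$ with the inversion action for $k\geq2$, and, via the quotient by $\langle a^2\rangle$, $G_{36,7}$. It is excluded by $S=\{a,ab,ac\}$:
\begin{enumerate}
\item Modulo $\langle c\rangle$, the set $S$ becomes the multiset $\{\bar a,\bar a,\bar a\bar b\}$ in $S_3$.
\item For the $2$-dimensional representation $\sigma$ of $S_3$, the matrix $2\sigma(\bar a)+\sigma(\bar a\bar b)$ has trace $0$ and determinant $-3$.
\item Hence it has eigenvalues $\pm\sqrt3$, which are eigenvalues of $\Cay(G_{18,4},S)$.
\end{enumerate}
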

In this paper, we completely classify all finite Cayley $2$-integral groups:

\begin{thm}\label{main}
	A finite group $G$ is Cayley $2$-integral if and only if $G$ is one of the following groups

\begin{itemize}
	\item[(1)] $\Bbb Z_2^r\times\Bbb Z_5^s$, $r,s\geq 0$,
	\item[(2)] $\Bbb Z_2^r\times\Bbb Z_4^s\times \Bbb Z_8^t$, $r,s,t\geq 0$,
	\item[(3)] $\Bbb Z_2^r\times\Bbb Z_3^s\times \Bbb Z_{12}^t$, $r,s,t\geq 0$,
	\item [(4)] $Q_8\times\Bbb Z_2^n$, $n\geq 0$,
	\item[(5)] $D_{2n}=\langle a,b\mid a^n=b^2=(ab)^2=1\rangle$, $n=3,4,5,6$,
		\item[(6)] $Dic_{4n}=\langle a,b\mid a^{2n}=1, b^2=a^n, bab^{-1}=a^{-1}\rangle$, $n=3,5$,
	%\item[(5)] $\langle a,b\mid a^4=1, b^2=a^2, a^b=a^{-1}\rangle\cong Q_8$,
	\item[(7)] $\langle a,b,c\mid a^4=b^2=c^2=1,ab=ba,bc=cb,cac=ab\rangle\cong(\Bbb Z_4\times\Bbb Z_2)\rtimes\Bbb Z_2$,
	\item[(8)] $\langle a,b\mid a^4=b^4=1, bab^{-1}=a^3\rangle\cong\Bbb Z_4\rtimes\Bbb Z_4$,
	\item[(9)] $\langle a,b\mid a^8=b^2=1, bab=a^5\rangle\cong\Bbb Z_8\rtimes\Bbb Z_2$,
	\item[(10)]$\langle a,b\mid a^8=1,a^4=b^2,a^b=a^{-1}\rangle\cong Q_{16}$,
	\item[(11)] $\langle a,b,c\mid a^4=b^2=c^2=(ab)^2=1, ac=ca, bc=cb\rangle\cong D_8\times\Bbb Z_2$,
	%\item[(11)] $\langle a,b,c\mid a^4=c^2=1, b^2=a^2, a^b=a^{-1}, ac=ca,bc=cb\rangle\cong Q_8\times\Bbb Z_2$
	\item[(12)] $\langle a,b,c\mid a^4=b^2=1,a^2=c^2, bab^{-1}=a^{-1}, bc=cb,ac=ca\rangle\cong (\Bbb Z_4\times\Bbb Z_2)\rtimes\Bbb Z_2$,
	\item[(13)] $G_{32,50}=\langle a,b,c,d\mid a^2=b^4=d^2=c^2b^2=cc^{b^{-1}}=(ab)^2=dd^b=a^ca=dd^c=a^db^2a=1\rangle\cong (Q_8\times\Bbb Z_2)\rtimes\Bbb Z_2$,
	\item[(14)] $G_{32,35}=\langle a,b,c\mid b^4=c^4=1, a^2=b^{-2}, ab=b^{-1}a,ac=c^{-1}a,bc=cb\rangle\cong\Bbb Z_4\rtimes Q_8$,
	\item[(15)] $G_{32,26}=\langle a,b,c\mid  b^4=a^4=c^4=1, ab=b^{-1}a, ca=ac, cb=bc\rangle\cong\Bbb Z_4\times Q_8$,
	\item[(16)] $G_{32,23}=\langle a,b,c\mid c^2=b^4=a^4=b^ab=cc^a= cc^b=a^2b^{-1}a^2b=(b^{-1}a^{-2}b^{-1})^2=1 \rangle\cong (\Bbb Z_4\rtimes\Bbb Z_4)\times\Bbb Z_2$,
	\item[(17)] $A_4=\langle a,b\mid   b^2=a^3=(ba)^3=1\rangle$,
	\item[(18)] $G_{18,4}=\langle a,b,c\mid  a^2=b^3=c^3=(ab)^2=(ac)^2=c^{-1}c^b=1 \rangle\cong (\Bbb Z_3\times \Bbb Z_3)\rtimes \Bbb Z_2$,
	\item[(19)] $G_{24,4}=\langle a,b,c\mid  c^3=b^2a^2=baba^{-1}=a^4=c^ac=c^{-1}c^b=(a^{-1}c)^2a^{-2}=1 \rangle \cong\Bbb Z_3\rtimes Q_8$,
	\item[(20)] $G_{24,11}=\langle a,b,c\mid  c^3=b^2a^2=baba^{-1}=a^4=c^{-1}c^a=c^{-1}c^b=1\rangle\cong Q_8 \times\Bbb Z_3 $,
	\item[(21)] $G_{24,7}=\langle a,b,c\mid b^2=c^3=a^4=c^ac=bb^a=b^cb=1 \rangle\cong\Bbb Z_2\times(\Bbb Z_3\rtimes \Bbb Z_4)$,
	\item[(22)] $G_{36,7}=\langle a,b,c\mid  b^3=c^3=a^4=b^ab=c^ac= c^{-1}c^b=1 \rangle \cong(\Bbb Z_3\times \Bbb Z_3)\rtimes \Bbb Z_4$.
\end{itemize}
	\end{thm}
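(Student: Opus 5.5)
Two facts drive the whole classification, and I will set them up first.

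First, the property is closed under subgroups. For $H\le G$ and an inverse-closed $S\subseteq H\setminus\{1\}$, the graph $\Cay(G,S)$ is a disjoint union of $[G:H]$ copies of $\Cay(H,S)$. So the two graphs have the same splitting field, and every subgroup of a Cayley $2$-integral group is again Cayley $2$-integral.

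Second, quotients give eigenvalues. If $N\trianglelefteq G$ and $\chi$ is a character of $G/N$, take $S$ to be the full preimage of an inverse-closed set in $G/N$, with $1$ removed. Then the spectrum of $\Cay(G/N,\bar S)$ sits inside that of $\Cay(G,S)$, up to an explicit shift by $|N|$ terms. Hence quotients by normal subgroups are also Cayley $2$-integral.

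For the eigenvalues themselves I use representation theory. The eigenvalues of $\Cay(G,S)$ are the eigenvalues of $\rho(S)=\sum_{s\in S}\rho(s)$ as $\rho$ runs over the irreducible representations. For abelian $G$ they are the values $\chi(S)$, which lie in $\mathbb{Q}(\zeta_{\exp G})$.

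\textbf{Abelian case.} By subgroup closure, every cyclic subgroup $\mathbb{Z}_n$ of $G$ must be Cayley $2$-integral. Take $S=\{g,g^{-1}\}$ with $g$ a generator. The splitting field is the real subfield $\mathbb{Q}(\zeta_n)^+$, of degree $\varphi(n)/2$. So $\varphi(n)\le 4$, which forces $n\in\{1,2,3,4,5,6,8,10,12\}$.

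I rule out $n=10$ using the set $S=\{g,g^{-1},g^5\}$. The eigenvalues $2\cos(2\pi k/10)+(-1)^k$ generate $\mathbb{Q}(\sqrt5)$, so this test does not kill it, and I will need a better choice of $S$. The plan is to exhibit an $S$ whose eigenvalues generate an extension containing both $\sqrt5$ and a second independent quadratic. If that fails for $\mathbb{Z}_{10}$ itself, I will instead use a subset of $\mathbb{Z}_2\times\mathbb{Z}_5$ that mixes characters, arranged so the splitting field is all of $\mathbb{Q}(\zeta_5)$, of degree $4$.

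Conversely, for an abelian group the splitting field lies in the fixed field of $\{\pm1\}$ acting on $\mathbb{Q}(\zeta_e)$, where $e=\exp G$. This is because $\chi(S)$ is real and is invariant under $\sigma_{-1}$. A finer look shows that $\sigma_a$ permutes the eigenvalues, so the splitting field is the fixed field of the subgroup of $(\mathbb{Z}/e)^*$ that stabilises every orbit-sum. This yields the following constraints:
\begin{itemize}
\item[(i)] mixing $5$ with $3$ or $4$ pushes the degree above $2$;
\item[(ii)] mixing $8$ with $3$ does the same;
\item[(iii)] exponents $\{2,4,8\}$ give $\mathbb{Q}(\sqrt2)$, exponents $\{2,3,12\}$ give $\mathbb{Q}(\sqrt3)$, and exponents $\{2,5\}$ give $\mathbb{Q}(\sqrt5)$.
\end{itemize}
The exclusions in (i) and (ii) are made concrete with small sets in $\mathbb{Z}_{15}$, $\mathbb{Z}_{20}$, $\mathbb{Z}_{24}$, $\mathbb{Z}_{40}$ and in $\mathbb{Z}_8\times\mathbb{Z}_3$, $\mathbb{Z}_4\times\mathbb{Z}_5$. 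Together these give the three abelian families.

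\textbf{Non-abelian case.} I use the induction structure supplied by the two closure properties: every proper subgroup and every proper quotient must already appear in the list. From the abelian result, element orders lie in $\{1,2,3,4,5,6,8,12\}$, with forbidden combinations. Sylow subgroups are $2$-, $3$- or $5$-groups of restricted exponent, and $|G|$ is divisible only by $2,3,5$. Primes $3$ and $5$ cannot both divide $|G|$, since otherwise an abelian subgroup of order $15$ or a $\mathbb{Z}_5\times\mathbb{Z}_3$ section would appear. This should bound $|G|$, in the spirit of the proofs of Theorem~\ref{azhv}: each minimal non-abelian subgroup must be one of the small groups listed, and maximal subgroups must belong to the known families.

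Concretely, the cases are $5$-groups together with $\{2,5\}$-groups, giving $D_{10}$ and $Dic_{20}$, and $\{2,3\}$-groups, giving the remaining listed groups. For each candidate up to order $32$ or $36$ I compute the splitting fields $\mathbb{Q}(\text{eigenvalues of }\rho(S))$ over the irreducible $\rho$ of degree at most $2$. For degree-$2$ representations the eigenvalues are $(\mathrm{tr}\pm\sqrt{\Delta})/2$, so the danger is a square root over $\mathbb{Q}(\sqrt d)$.

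\textbf{Main obstacle.} The hard step is the non-abelian sufficiency and the completeness bound. It is not enough to show that the group algebra entries lie in a quadratic field; every $\rho(S)$ must have all its eigenvalues generating a common quadratic field, and the discriminants $\Delta$ for $2$-dimensional $\rho$ can introduce new square roots. I would handle this by explicit parametrisation of $\rho(S)$ for each listed group, with a computer check (GAP) over all inverse-closed $S$. For completeness, I would show that any group of order at least $64$, or any extension beyond the list, contains a forbidden subgroup or quotient, such as $D_{16}$, $\mathbb{Z}_8\times\mathbb{Z}_4$, or $\mathbb{Z}_3\rtimes\mathbb{Z}_8$, exhibiting an explicit $S$ of degree $4$ in each case.
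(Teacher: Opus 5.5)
Your abelian step fails at $n=10$. $\mathbb{Z}_{10}\cong\mathbb{Z}_2\times\mathbb{Z}_5$ is in family (1) with $r=s=1$, so no set $S$ can rule it out. Your own fixed-field remark shows why. The adjacency matrix is symmetric, so every eigenvalue $\chi(S)$ of an undirected Cayley graph on an abelian group of exponent $10$ lies in $\mathbb{Q}(\zeta_{10})\cap\mathbb{R}=\mathbb{Q}(\sqrt5)$. This is the paper's Lemma \ref{exp}: $\deg=\varphi(n)/m\le 2$, since $S^{-1}=S$ forces $m\ge 2$. Your item (iii) already asserts exactly this for $\mathbb{Z}_2\times\mathbb{Z}_5$, and your fallback of mixing characters to reach all of $\mathbb{Q}(\zeta_5)$ is impossible. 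The mistake propagates. The element-order set $\{1,2,3,4,5,6,8,12\}$ you carry into the non-abelian case is wrong; Corollary \ref{2,3,5} allows $10$. Using it would wrongly eliminate $\mathbb{Z}_2^r\times\mathbb{Z}_5^s$ with $r,s\ge 1$, and also $Dic_{20}$, whose generator $a$ has order $10$ and which you yourself list as an answer.

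In the non-abelian part, your reason why $3$ and $5$ cannot both divide $|G|$ is false. $A_5$ and the Frobenius groups $\mathbb{Z}_5^2\rtimes\mathbb{Z}_3$ and $\mathbb{Z}_3^4\rtimes\mathbb{Z}_5$ have order divisible by $15$, but no element of order $15$ and no section isomorphic to $\mathbb{Z}_{15}$. So subgroup/quotient closure plus the cyclic classification says nothing about them. The paper needs real work here:
\begin{itemize}
\item solvability, via Herzog's list of simple groups with three prime divisors (every $\{2,3,5\}$-candidate contains $A_5$, which is killed by an explicit $S$, Lemma \ref{solv});
\item Lemma \ref{3,5}, where a normal elementary abelian Sylow subgroup with a fixed-point-free action yields a Frobenius subgroup of order $3^4\cdot 5$ or $5^2\cdot 3$, eliminated by explicit characteristic polynomials;
\item Hall subgroups (Corollary \ref{key}).
\end{itemize}
Element orders also cannot detect the exponent-$p$ Heisenberg groups of order $27$ and $125$, which need Lemma \ref{elab}.

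Finally, your completeness plan (``this should bound $|G|$'', ``any group of order at least $64$ contains a forbidden subgroup'') cannot work. The answer contains the infinite family $Q_8\times\mathbb{Z}_2^n$, for example $Q_8\times\mathbb{Z}_2^3$ of order $64$. What is missing is an inductive structural argument:
\begin{itemize}
\item For $2$-groups, the paper uses the complete lists at orders $16$, $32$, $64$. It then applies Lemma \ref{Dedekind}: take a central involution $x$, get $G/\langle x\rangle\cong Q_8\times\mathbb{Z}_2^{n-4}$ by induction, and use two maximal subgroups from which $\langle x\rangle$ splits off.
\item For non-abelian $\{2,3\}$-groups, it uses Lemma \ref{akhlaghi1} ($A_4$ or a normal Sylow $3$-subgroup). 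It then applies Fitting's Lemma to $P\rtimes\langle x\rangle$ for a central involution $x$ of the Sylow $2$-subgroup, and Corollaries \ref{3k2} and \ref{36main} (orders $2\cdot 3^k$ and $4\cdot 3^k$) to control $P$.
\end{itemize}
Without such arguments, ``every proper subgroup and quotient is already on the list'' is a heuristic, not a proof.
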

	
\section{Preliminaries and notation}
In this paper, graphs are finite, undirected, loop-free and without multiple edges. Also the groups are finite. Given a group $G\neq 1$ and an inverse-closed subset $S$ of $G\setminus\{1\}$, the Cayley graph
$\Cay(G,S)$ has $|G:\langle S\rangle|$ components each of which is isomorphic to  $\Cay(\langle S\rangle, S)$. In particular, $\Cay(G,S)$ is connected if and only if  $G=\langle S\rangle$. By \cite[Theorem 2.6]{Cvet}, the following result is clear.
\begin{lem}\label{regular}
	Let $\Gamma$ be a regular graph with $\deg(\Gamma)=k$. Then $\deg(\Gamma^c)=k$, where $\Gamma^c$ is the complement of $\Gamma$. 
\end{lem}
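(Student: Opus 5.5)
The plan is to express the spectrum of $\Gamma^c$ directly in terms of the spectrum of $\Gamma$, which is what \cite[Theorem 2.6]{Cvet} provides. Let $\Gamma$ be $d$-regular on $n$ vertices with adjacency matrix $A$. Then $\Gamma^c$ has adjacency matrix $J-I-A$, where $J$ is the all-ones matrix. Since $\Gamma$ is regular, $A$ commutes with $J$, and the all-ones vector $\mathbf{1}$ is an eigenvector of $A$ with eigenvalue $d$. Because $A$ is symmetric, we can choose an orthogonal eigenbasis of $A$ containing $\mathbf{1}$; every other vector in this basis is orthogonal to $\mathbf{1}$ and is therefore killed by $J$. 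Consequently, if the spectrum of $\Gamma$ is $d=\lambda_1,\lambda_2,\dots,\lambda_n$, then the spectrum of $\Gamma^c$ is
\[
n-1-d,\; -1-\lambda_2,\;\dots,\; -1-\lambda_n .
\]
This is exactly the content of \cite[Theorem 2.6]{Cvet}.

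From this description we compare the splitting fields. Both $d$ and $n-1-d$ are rational integers. Each remaining eigenvalue of $\Gamma^c$ has the form $-1-\lambda_i$, so it lies in $\mathbb{Q}(\lambda_2,\dots,\lambda_n)$, and conversely $\lambda_i=-1-(-1-\lambda_i)$ lies in the field generated by the eigenvalues of $\Gamma^c$. Hence
\[
\mathrm{SF}(\Gamma)=\mathbb{Q}(\lambda_1,\dots,\lambda_n)=\mathbb{Q}(\lambda_2,\dots,\lambda_n)=\mathrm{SF}(\Gamma^c),
\]
and therefore $\deg(\Gamma^c)=\deg(\Gamma)=k$.

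There is no real obstacle here. The only point that needs care is why $J$ annihilates the other eigenvectors of $A$: this needs both the symmetry of $A$ (so that the eigenbasis can be taken orthogonal) and the regularity of $\Gamma$ (so that $\mathbf{1}$ is an eigenvector of $A$ in the first place). Regularity also guarantees that the degree $d$ is an integer eigenvalue and contributes nothing to the field. When $d$ has multiplicity greater than one, the remaining eigenvectors for $d$ are chosen inside $\mathbf{1}^\perp$, so the argument is unchanged.
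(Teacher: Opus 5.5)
Your proposal is correct and takes essentially the paper's route: the paper just cites \cite[Theorem 2.6]{Cvet} for the spectrum $n-1-d,\,-1-\lambda_2,\dots,-1-\lambda_n$ of the complement of a $d$-regular graph. You re-derive that spectrum yourself and then note that the rational shift leaves the splitting field unchanged, which is exactly the step the paper leaves as ``clear''.
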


By the following corollary, to study the Cayley $k$-integral groups, it  suffices to study their connected Cayley graphs.
\begin{core}\label{connected}
	A group $G$ is Cayley $k$-integral if and only if every connected Cayley graph of $G$ has algebraic degree at most $k$.
\end{core}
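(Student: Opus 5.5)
The forward direction is immediate from the definition, since a connected Cayley graph $\Cay(G,S)$ is in particular a Cayley graph of $G$. For the converse, I will assume every connected Cayley graph of $G$ has algebraic degree at most $k$ and take an arbitrary inverse-closed $S\subseteq G\setminus\{1\}$. The plan is to show $\deg(\Cay(G,S))\leq k$ by passing to the complement whenever $\Cay(G,S)$ is disconnected, and then invoking Lemma~\ref{regular}.

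First I record that complements of Cayley graphs are Cayley graphs. Put $\Gamma=\Cay(G,S)$ and $S'=G\setminus(S\cup\{1\})$. Then $S'$ is inverse-closed and omits $1$, and $\Gamma^c=\Cay(G,S')$. Moreover $\Gamma$ is $|S|$-regular, so Lemma~\ref{regular} gives $\deg(\Gamma^c)=\deg(\Gamma)$.

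If $\Gamma$ is connected, the hypothesis applies to it directly. Otherwise $H=\langle S\rangle$ is a proper subgroup of $G$, and I will show $\Gamma^c$ is connected, i.e. $\langle S'\rangle=G$. Every element of $G\setminus H$ lies outside $S\cup\{1\}\subseteq H$, so $G\setminus H\subseteq S'$. The subgroup generated by $G\setminus H$ is all of $G$, because for $h\in H$ and any fixed $x\notin H$ we have $xh\notin H$, hence $h=x^{-1}(xh)\in\langle G\setminus H\rangle$. Therefore $\Gamma^c$ is a connected Cayley graph of $G$, so $\deg(\Gamma)=\deg(\Gamma^c)\leq k$ by hypothesis.

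One degenerate case needs checking: $\Gamma^c$ must be a legitimate Cayley graph, which is fine since $G\neq1$ by definition. If $S=\emptyset$, then $\Gamma$ is edgeless with all eigenvalues $0$, so its degree is $1\leq k$ anyway. The only real step is the generation argument $\langle G\setminus H\rangle=G$ for a proper subgroup $H$, and it is elementary, so I expect no genuine obstacle.
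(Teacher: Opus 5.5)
Your proof is correct and takes the same route as the paper: when $\Cay(G,S)$ is disconnected, pass to the complement $\Cay(G,G\setminus(S\cup\{1\}))$ and apply Lemma~\ref{regular}. The paper uses without comment that this complement is connected, which is the standard fact that the complement of a disconnected graph is connected; your argument that $\langle G\setminus H\rangle=G$ for a proper subgroup $H$ proves it explicitly.
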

\begin{proof}
	One direction is obvious. Conversely, towards a contradiction, suppose that every connected Cayley graph of $G$ has algebraic degree at most $k$, but there exists a subset $S$ of $G$ such that $\Gamma:=\Cay(G,S)$ has algebraic degree greater than $k$. Then $\Cay(G,S)$ is disconnectd. Hence $\Gamma^c=\Cay(G,G\setminus( S\cup\{1\}))$ has algebraic degree at most $k$, which contradicts Lemma \ref{regular}. 
\end{proof} 

The proof of the following result is similar to \cite[Lemma 4.3]{Azhvan}, but we give it for completeness.
\begin{lem}\label{sbg-qoutient}
	Let $G$ be a Cayley $k$-integral group. Then every subgroup and every quotient group of $G$ is Cayley $k$-integral. 
\end{lem}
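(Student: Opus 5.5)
We treat subgroups and quotients separately. In both cases we realize the spectrum of a Cayley graph of the smaller group inside the spectrum of a suitable Cayley graph of $G$. Then the splitting field of the former is a subfield of the splitting field of the latter, and so its degree over $\mathbb{Q}$ is at most $k$.

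\emph{Subgroups.} Let $H\leq G$ and let $S$ be an inverse-closed subset of $H\setminus\{1\}$. The same set $S$ is an inverse-closed subset of $G\setminus\{1\}$. By the remark preceding Lemma \ref{regular}, $\Cay(G,S)$ is the disjoint union of $|G:\langle S\rangle|$ copies of $\Cay(\langle S\rangle,S)$. Likewise $\Cay(H,S)$ is the disjoint union of $|H:\langle S\rangle|$ copies of the same graph. Hence $\Cay(H,S)$ and $\Cay(G,S)$ have the same set of distinct eigenvalues, namely those of $\Cay(\langle S\rangle,S)$, and therefore the same splitting field. So $\deg\Cay(H,S)=\deg\Cay(G,S)\leq k$.

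\emph{Quotients.} Let $N\trianglelefteq G$ and let $\bar S$ be an inverse-closed subset of $G/N\setminus\{N\}$. Take $S=\pi^{-1}(\bar S)$, the full preimage under the natural map $\pi\colon G\to G/N$. This set is inverse-closed, and it avoids $1$ because $\bar S$ does not contain the identity coset. The key step is to identify $\Cay(G,S)$ with a lexicographic (blow-up) product: two elements $g,h$ are adjacent exactly when $hg^{-1}\in S$, i.e. when $\pi(h)\pi(g)^{-1}\in\bar S$. Thus $\Cay(G,S)\cong\Cay(G/N,\bar S)[\overline{K_{|N|}}]$, and its adjacency matrix is $A(\Cay(G/N,\bar S))\otimes J_{|N|}$. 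The eigenvalues of $J_{|N|}$ are $|N|$ and $0$, so the spectrum of $\Cay(G,S)$ consists of $|N|\lambda$ for each eigenvalue $\lambda$ of $\Cay(G/N,\bar S)$, together with $0$. Since $|N|$ is a nonzero rational, $\mathbb{Q}(|N|\lambda)=\mathbb{Q}(\lambda)$. Hence $\mathrm{SF}(\Cay(G/N,\bar S))=\mathrm{SF}(\Cay(G,S))$, which has degree at most $k$.

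Neither part has a real obstacle. The only point needing care is checking that the adjacency matrix is the Kronecker product $A\otimes J$, which holds once the vertices are ordered by cosets of $N$. The argument also needs no connectivity assumption, so Corollary \ref{connected} is not required.
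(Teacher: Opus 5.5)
Your proof is correct and follows essentially the same route as the paper. For subgroups, the paper notes directly that $\Cay(G,S)$ is a disjoint union of $|G:H|$ copies of $\Cay(H,S)$; your detour through $\Cay(\langle S\rangle,S)$ is equivalent. For quotients, the paper uses exactly your identification of the adjacency matrix of $\Cay(G,\pi^{-1}(\bar S))$ with $A_{G/N}\otimes J_{|N|}$, together with the scaling $\lambda\mapsto |N|\lambda$.
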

\begin{proof}
Let $H\leq G$ and $S$ be an inverse-closed subset of $H\setminus\{1\}$. Then $\Cay(G,S)$ is the disjoint union of  $|G:H|$ copeis of $\Cay(H,S)$. This implies that $H$ is also Cayley $k$-integral.

Now let $N\unlhd G$ and $\Gamma=\Cay(G/N,S)$ for some inverse-closed subset $S\neq N$ of $G/N$. Let $\pi:G\rightarrow G/N$ be the canonical epimorphism and $T:=\pi^{-1}(S)$. Then $A_G=A_{G/N}\otimes J_n$, where $A_G$ and $A_{G/N}$ are adjacency matrices of $\Cay(G,T)$ and $\Cay(G/N,S)$, respectively, $n=|N|$ and $J_n$ is the all ones $n\times n$ matrix. Since eigenvalues of $J_n$ are $0$ with multiplicity $n-1$ and $n$ with multiplicity $1$, the eigenvalues of $\Cay(G,T)$ are $0$ with multiplicity $(n-1)|G|/|N|$, and $n\lambda_i$,  with multiplicity $m_i$, where distinct eigenvalues of $\Cay(G/N,S)$ are $\lambda_1,\ldots,\lambda_t$ and $m_i$ is the multiplicity of $\lambda_i$. Hence $\textrm{SF}(\Cay(G,T))=\Bbb Q(n\lambda_1,\ldots,n\lambda_t)=\Bbb Q(\lambda_1,\ldots,\lambda_t)=\textrm{SF}(\Cay(G/N,S))$, since $n$ is an integer. This means that $G/N$ is Cayley $k$-integral.
\end{proof}

Recall that given a positive non-square integer $m$, the field $\Bbb Q(\sqrt{m})=\{q_1+q_2\sqrt{m}\mid q_1,q_2\in\Bbb Q\}$ is a field extension of $\Bbb Q$ of degree $2$ (a $\Bbb Q$-vector space over $\Bbb Q$ of dimension $2$). Thus given a graph $\Gamma$, $\deg(\Gamma)=2$ if and only if $\textrm{SF}(\Gamma)=\Bbb Q(\sqrt{m})$ for some fixed positive non-square integer $m$. Hence a finite group $G$
is Cayley $2$-integral if and only if for every $S=S^{-1}\subseteq G\setminus\{1\}$, each eigenvalue of $\Cay(G,S)$ lies in $\Bbb Q(\sqrt{m})$ for some fixed positive non-square integer $m$. 

By a result of Diaconis and Shahshahani \cite{DS}, see also \cite[Corollary 7]{AT}, one can compute the eigenvalues of a Cayley graph over $G$ by using  irreducible representations of $G$. We briefly recall some basic definitions of representation theory of groups for the convenience of the reader.  Let $G$ be a finite group. A {\em representation} of $G$ is a homomorphism $\rho:G\rightarrow GL(V)$ for some finite dimensional vector space $V$. Usually, we write $\rho_g$ for $\rho(g)$ and $\rho_g(v)$ for the action of $\rho_g$ on $v\in V$. The dimension of $V$ is called the {\em degree} of $\rho$.  Two representations $\varphi:G\rightarrow GL(V)$ and $\psi:G\rightarrow GL(W)$ are said to be {\em equivalent} if there exists an isomorphism $\tau:V\rightarrow W$ such that $\psi_g= \tau\varphi_g\tau^{-1}$ for all $g\in G$. For a representation $\rho:G\rightarrow GL(V)$, a subspace $W\leq  V$ is said to be {\em $G$-invariant} if for all $g\in G$ and $w\in W$, one has $\rho_g(w)\in W$. The representation
$\rho$ is said to be {\em irreducible} if the only $G$-invariant subspaces of $V$ are $\{0\}$ and $V$. The reader is referred to \cite{JL} for more details on representations of groups.

\begin{rem}\label{rep}
	Let $\textrm{Irr}(G)=\{\rho_1,\ldots,\rho_n\}$ be the set of all inequivalent irreducible complex representations of $G$. If $\lambda$ is an eigenvalue of $\Cay(G,S)$, then $\lambda$ is an eigenvalue of $\rho_i(S):=\sum_{s\in S}\rho_i(s)$, for some $1\leq i\leq n$, and conversely, each eigenvalue of $\rho_i(S)$ appears as an eigenvalue of $\Cay(G,S)$, \cite[Corollary 7]{AT}. 
\end{rem}

Our notations are standard and mainly taken from \cite{Robinson}, but for the reader's convenience we recall some of them as follows:
\begin{itemize}
	\item $\exp(G)$: the exponent of $G$.
	\item $\langle a\rangle$: the cyclic group generated by $a$.
	\item $\Bbb Z_n$: the additive group of integers modulo $n$.
	\item $C_H(K)$: the set of elements of $H$ commute with all elements of $K$.
	\item $Z(G)$: the center of $G$.
	\item $H\times K$: the direct product of $H$ and $K$.
	\item $H\rtimes K$: the semidirect product of $H$ by $K$.
	\item $D_{2k}$: the dihedral group of order $2k$.
	\item $Q_8$: the quaternion group of order $8$.
	\item $S_n$: the symmetric group of degree $n$.
	\item $A_n$: the alternating group of degree $n$.
	\item $\Phi(G)$: the Frattini subgroup of $G$.
	\item $[H,K]$: the group generated by all $[h,k]:=h^{-1}k^{-1}hk$, where $h\in H$ and $k\in K$.
	\item $G'$: the derived subgroup $[G,G]$ of $G$.
	\item $G_{n,r}$: the small group with IdGroup $[n,r]$ based on GAP library.
	\item $x^y$: the element $y^{-1}xy$.
	\item $x^{\pm k}\in S$: means $x^k, x^{-k}\in S$, where $k\geq 1$.
\end{itemize}

\section{Finite abelian Cayley $2$-integral groups}
\begin{lem}\label{exp}
	Let $G$ be a finite abelian group, and $\exp(G)\in\{2,3,4,5,6,8,10,12\}$. Then $G$ is Cayley $2$-integral.
\end{lem}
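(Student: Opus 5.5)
For an abelian group $G$ of exponent $e$, I would use Remark~\ref{rep}. All irreducible representations of $G$ are linear characters $\chi$. Each such $\chi$ takes values in the $e$-th roots of unity, so for every inverse-closed $S\subseteq G\setminus\{1\}$ the eigenvalues of $\Cay(G,S)$ are the numbers $\chi(S)=\sum_{s\in S}\chi(s)$. Because $S=S^{-1}$, each eigenvalue is real: $\overline{\chi(S)}=\chi(S^{-1})=\chi(S)$. It also lies in $\Bbb Q(\zeta_e)$, where $\zeta_e=e^{2\pi i/e}$. Hence every eigenvalue lies in the maximal real subfield $\Bbb Q(\zeta_e)\cap\Bbb R=\Bbb Q(\zeta_e+\zeta_e^{-1})$, and so $\textrm{SF}(\Cay(G,S))\subseteq \Bbb Q(\zeta_e+\zeta_e^{-1})$.

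The key step is to note that $[\Bbb Q(\zeta_e+\zeta_e^{-1}):\Bbb Q]=\varphi(e)/2$ for $e\geq 3$, and that this is at most $2$ exactly when $\varphi(e)\leq 4$. Checking each value in the list:
\begin{itemize}
\item for $e\in\{2,3,4,6\}$ the real subfield is $\Bbb Q$ itself;
\item for $e=5,10$ it is $\Bbb Q(\sqrt5)$, since $\zeta_5+\zeta_5^{-1}=(-1+\sqrt5)/2$ and $\Bbb Q(\zeta_{10})=\Bbb Q(\zeta_5)$;
\item for $e=8$ it is $\Bbb Q(\sqrt2)$, since $\zeta_8+\zeta_8^{-1}=\sqrt2$;
\item for $e=12$ it is $\Bbb Q(\sqrt3)$, since $\zeta_{12}+\zeta_{12}^{-1}=\sqrt3$.
\end{itemize}
In every case the splitting field is a subfield of a field of degree at most $2$ over $\Bbb Q$, so its degree is at most $2$. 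This holds for every $S$, so $G$ is Cayley $2$-integral. Corollary~\ref{connected} is not even needed here.

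I would also justify briefly that the splitting field is the field generated by the eigenvalues. This follows because the characteristic polynomial factors over $\Bbb Q(\zeta_e)$ into linear factors $x-\chi(S)$.

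The only step needing care, rather than a real obstacle, is identifying the real subfield of $\Bbb Q(\zeta_e)$. I would handle it via the fixed field of complex conjugation in the Galois group $(\Bbb Z/e\Bbb Z)^\times$, which has index $2$ when $e\geq 3$. Alternatively, I would compute the explicit values $2\cos(2\pi k/e)$ for the listed $e$, which directly shows that every $\chi(S)$, being a $\Bbb Z$-combination of such cosines, lies in $\Bbb Q$, $\Bbb Q(\sqrt2)$, $\Bbb Q(\sqrt3)$ or $\Bbb Q(\sqrt5)$.
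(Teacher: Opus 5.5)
Your proof is correct, and it rests on the same mechanism as the paper's proof, which relies on a citation. The paper sets $n=\exp(G)$ and notes that the multiplier group $\{k\in\Bbb Z_n^*\mid S^k=S\}$ has order $m\ge 2$, because it contains $-1$ when $S=S^{-1}$. It then invokes \cite[Corollary 3.10]{WAF}, which gives the exact value $\deg(\Gamma)=\varphi(n)/m$, and treats the case $n=2$ separately by citing \cite{Azhvan}. You instead observe directly that the multiplier $-1$ acts on $\Bbb Q(\zeta_e)$ as complex conjugation. Hence the real eigenvalues $\chi(S)$ lie in its fixed field $\Bbb Q(\zeta_e+\zeta_e^{-1})$, which has degree $\varphi(e)/2\le 2$. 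Your route is elementary and self-contained, and it handles $e=2$ uniformly. It also shows that a single field ($\Bbb Q$, $\Bbb Q(\sqrt2)$, $\Bbb Q(\sqrt3)$ or $\Bbb Q(\sqrt5)$) contains the spectrum of every Cayley graph of $G$ simultaneously. The cited formula gives more, namely the exact degree for each individual $S$, but that precision is not needed for this lemma.
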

\begin{proof}
	Let $n=\exp(G)$. If $n=2$, then $G$ is an elementary abelian $2$-group and so is Cayley $2$-integral \cite[Theorem 4.2]{Azhvan}. Hence we may assume that $n\geq 3$. Then $\varphi(n)=2$ or $4$, where $\phi$ is the Euler function. Let $S$ be an inverse-closed subset of $G$ not including the identity element of $G$ and $\Gamma=\Cay(G,S)$. Since $n>2$ and $S$ is inverse-closed, $m:=|\{k\in\Bbb Z_n^*\mid S^k=S\}|\geq 2$. Hence, by \cite[Corollary 3.10]{WAF}, we have  $\deg(\Gamma)=\frac{\varphi(n)}{m}\leq 2$, which means $G$ is Cayley $2$-integral.
\end{proof}

%\begin{core}\label{ab}
%	For any $r,s,t\geq 0$, the  following abelian groups are Cayley $2$-integral:
%	\begin{itemize}
	%%	\item $\Bbb Z_2^r\times\Bbb Z_3^s$
	%%	\item $\Bbb Z_2^r\times\Bbb Z_4^s$
	%%	\item $\Bbb Z_5^k$, $k\geq 1$
	%%	\item $\Bbb Z_8^k$, $k\geq 1$
	%%	\item $\Bbb Z_{10}^k$, $k\geq 1$
	%%	\item $\Bbb Z_{12}^k$, $k\geq 1$
%		\item $\Bbb Z_2^r\times\Bbb Z_5^s$%\times\Bbb Z_{10}^t$
%		\item $\Bbb Z_2^r\times\Bbb Z_4^s\times \Bbb Z_8^t$ 
%		\item $\Bbb Z_2^r\times\Bbb Z_3^s\times \Bbb Z_4^t$%\times\Bbb Z_6^m\times \Bbb Z_{12}^n$.
%	\end{itemize}
%\end{core}

\begin{core}\label{order}
	Let $G\neq 1$ be a finite cyclic group. Then $G$ is  Cayley $2$-integral if and only if $G\cong\Bbb Z_n$, where $n\in\{2,3,4,5,6,8,10,12\}$.
\end{core}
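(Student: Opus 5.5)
The plan is to prove the two directions separately. Sufficiency is immediate from Lemma \ref{exp}: for $n\in\{2,3,4,5,6,8,10,12\}$ the group $\Bbb Z_n$ is abelian of exponent $n$, so it is Cayley $2$-integral.

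For necessity, let $G=\Bbb Z_n$ be Cayley $2$-integral. By Lemma \ref{sbg-qoutient}, every subgroup of $\Bbb Z_n$, and hence every $\Bbb Z_d$ with $d\mid n$, is also Cayley $2$-integral. It therefore suffices to exhibit, for certain small "forbidden" orders $d$, a Cayley graph of $\Bbb Z_d$ of algebraic degree greater than $2$, and then to check that every $n$ outside the list has such a divisor.

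The natural test graph is the cycle $C_d=\Cay(\Bbb Z_d,\{\pm1\})$. Its eigenvalues are $2\cos(2\pi j/d)$ for $0\le j<d$. Its splitting field contains $\Bbb Q(\cos(2\pi/d))$, which has degree $\varphi(d)/2$ for $d\geq 3$; in fact it equals this field, since every $2\cos(2\pi j/d)$ lies in it. Thus $\deg(C_d)=\varphi(d)/2$, which exceeds $2$ exactly when $\varphi(d)>4$.

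It remains to show that every $n\notin\{1,2,3,4,5,6,8,10,12\}$ has a divisor $d$ with $\varphi(d)>4$. Let $p$ be a prime divisor of $n$.
\begin{itemize}
\item If $p\geq 7$, then $\varphi(p)\geq 6$, so take $d=p$.
\item Otherwise $n=2^a3^b5^c$. If $a\geq 4$, then $\varphi(16)=8$. If $b\geq 2$, then $\varphi(9)=6$. If $c\geq 2$, then $\varphi(25)=20$.
\item If $3\cdot5\mid n$, then $\varphi(15)=8$.
\end{itemize}
This leaves $n\mid 40$ or $n\mid 24$. The divisors of $40$ not in the list are $20$ and $40$, and $\varphi(20)=8$ settles both. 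The divisors of $24$ not in the list are $24$ itself, and $\varphi(24)=8$. The case analysis is routine; the only step needing any care is the computation $\deg(C_d)=\varphi(d)/2$. For that step, the inclusion of the splitting field in the real cyclotomic subfield together with the containment of $2\cos(2\pi/d)$ gives equality of fields, so the degree is the standard $[\Bbb Q(\zeta_d)^+:\Bbb Q]$.
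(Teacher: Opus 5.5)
Your proof is correct and is essentially the paper's argument: both use the cycle $\Cay(\Bbb Z_n,\{g^{\pm1}\})$ as the test graph. The difference is that you compute its algebraic degree $\varphi(n)/2$ directly from the real cyclotomic field, whereas the paper cites the classifications of integral cycles \cite{AV} and $2$-integral cycles \cite{AAFW}. The reduction to subgroups $\Bbb Z_d$ is harmless but not needed: your degree formula applied to $\Bbb Z_n$ itself gives $\varphi(n)\le 4$, and your case analysis then amounts to the standard determination of the $n$ with $\varphi(n)\le 4$.
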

\begin{proof}
	Let $G=\langle g\rangle\cong\Bbb Z_n$ be Cayley $2$-integral and $n>2$. Then $\Cay(G,\{g,g^{-1}\})$ is integral or $2$-integral. In the first case, $n=3,4,6$ \cite[Lemma 2.7]{AV} and in the latter case $n=5,8,10,12$ \cite[Theorem 5.1]{AAFW}. Thus $n\in\{2,3,4,5,6,8,10,12\}$. The converse direction follows from Lemma \ref{exp}.
\end{proof}

\begin{core}\label{2,3,5}
	If $G$ is a finite Cayley $2$-integral group, then $G$ is a $\{2,3,5\}$-group. In particular, the order of every non-identity element of $G$ is $2,3,4,5,6,8,10$ or $12$.
\end{core}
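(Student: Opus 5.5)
The plan is to reduce everything to cyclic subgroups and then apply Corollary~\ref{order}. Let $G$ be a finite Cayley $2$-integral group and let $g\in G$ be a non-identity element. By Lemma~\ref{sbg-qoutient}, the cyclic subgroup $\langle g\rangle$ is again Cayley $2$-integral. Since $\langle g\rangle\neq 1$, Corollary~\ref{order} applies and gives $\langle g\rangle\cong\Bbb Z_n$ with $n\in\{2,3,4,5,6,8,10,12\}$. Because the order of $g$ is exactly $n$, this settles the second assertion of Corollary~\ref{2,3,5}: every non-identity element of $G$ has order $2,3,4,5,6,8,10$ or $12$.

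For the first assertion, suppose a prime $p$ divides $|G|$. By Cauchy's theorem, $G$ contains an element of order $p$. By the previous paragraph, $p$ must lie in $\{2,3,4,5,6,8,10,12\}$, and the only primes in this set are $2,3,5$. Hence $|G|$ is divisible only by primes in $\{2,3,5\}$, that is, $G$ is a $\{2,3,5\}$-group.

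No step here is a real obstacle. The substantive input is the cyclic classification in Corollary~\ref{order}, which rests on the integral and $2$-integral results for the cycle $\Cay(\Bbb Z_n,\{g,g^{-1}\})$ cited from \cite{AV} and \cite{AAFW}. The only point to check is that Lemma~\ref{sbg-qoutient} applies to the subgroup $\langle g\rangle$, and it does, since that lemma covers arbitrary subgroups.
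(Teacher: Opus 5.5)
Your proposal is correct and is the same argument as the paper's one-line proof. You pass to the cyclic subgroup $\langle g\rangle$ via Lemma~\ref{sbg-qoutient} and apply Corollary~\ref{order}; you also make explicit the Cauchy's theorem step from element orders to the prime divisors of $|G|$, which the paper leaves implicit.
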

\begin{proof}
	It is a direct consequence of Lemma \ref{sbg-qoutient} and Corollary \ref{order}.
\end{proof}

\begin{pro}\label{baseab}
The only finite abelian Cayley $2$-integral groups are 
\begin{itemize}
	\item[(1)] $\Bbb Z_2^r\times\Bbb Z_5^s$,
	\item[(2)] $\Bbb Z_2^r\times\Bbb Z_4^s\times \Bbb Z_8^t$,
	\item[(3)] $\Bbb Z_2^r\times\Bbb Z_3^s\times \Bbb Z_{12}^t$,
\end{itemize}
where $r,s,t$ are arbitrary non-negative integers.
\end{pro}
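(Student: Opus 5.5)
The plan is to prove the two directions separately, using Lemma \ref{exp} for sufficiency and Corollaries \ref{order} and \ref{2,3,5} together with Lemma \ref{sbg-qoutient} for necessity.

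\emph{Sufficiency.} Every group in (1)--(3) has exponent dividing $10$, $8$ or $12$.
\begin{itemize}
\item Groups in (1) have exponent in $\{1,2,5,10\}$.
\item Groups in (2) have exponent in $\{1,2,4,8\}$.
\item Groups in (3) have exponent in $\{1,2,3,6,12\}$.
\end{itemize}
The trivial group is excluded by the convention of the definition. Every other group in the list is abelian with exponent in $\{2,3,4,5,6,8,10,12\}$, so Lemma \ref{exp} applies directly and shows it is Cayley $2$-integral.

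\emph{Necessity, exponent bound.} Let $G$ be a finite abelian Cayley $2$-integral group. By Lemma \ref{sbg-qoutient}, every cyclic subgroup of $G$ is Cayley $2$-integral. Choose an element of maximal order; in an abelian group its order equals $\exp(G)$. Corollary \ref{order} then gives $\exp(G)\in\{2,3,4,5,6,8,10,12\}$. In particular no element has order $15$, $20$ or $24$, so $3$ and $5$ cannot both divide $|G|$, and $5$ together with $4$, or $3$ together with $8$, cannot occur.

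\emph{Necessity, matching to the families.} Decompose $G$ into its Sylow subgroups and split according to $\exp(G)$.
\begin{itemize}
\item If $\exp(G)\in\{5,10\}$, the $5$-part is elementary abelian and the $2$-part is elementary abelian, giving (1).
\item If $\exp(G)\in\{2,4,8\}$, $G$ is a $2$-group of exponent at most $8$, hence $\Bbb Z_2^r\times\Bbb Z_4^s\times\Bbb Z_8^t$, giving (2).
\item If $\exp(G)\in\{3,6,12\}$, then $G\cong\Bbb Z_2^r\times\Bbb Z_4^a\times\Bbb Z_3^b$. When $a\le b$, combining each $\Bbb Z_4$ with a $\Bbb Z_3$ yields $\Bbb Z_2^r\times\Bbb Z_3^{b-a}\times\Bbb Z_{12}^{a}$, which is of form (3).
\end{itemize}

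\emph{Main obstacle: the case $a>b$.} This is the step I expect to be hardest, and I am not sure it can be closed. Here the group, for example $\Bbb Z_4^2\times\Bbb Z_3$, is not literally of the shape $\Bbb Z_2^r\times\Bbb Z_3^s\times\Bbb Z_{12}^t$. To place it in (3) one would need a Cayley set on $\Bbb Z_4^2\times\Bbb Z_3$ of algebraic degree greater than $2$.

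However, such a set cannot exist. This group has exponent $12$, so Lemma \ref{exp} shows it is Cayley $2$-integral. Hence I would first check this example carefully against \cite[Corollary 3.10]{WAF}, on which Lemma \ref{exp} rests. If that check confirms Lemma \ref{exp}, family (3) as worded does not cover all the groups the argument produces. The proof of necessity could then only be completed by reading (3) as the class of all abelian $\{2,3\}$-groups of exponent dividing $12$.
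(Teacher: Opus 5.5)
Your route is the paper's route: bound $\exp(G)$ using cyclic subgroups, Lemma \ref{sbg-qoutient} and Corollary \ref{order}, sort the groups by exponent, and get sufficiency from Lemma \ref{exp}. The obstacle you flag is real, and you can drop the hedging. The statement is false as worded. The paper's proof passes over exactly this point when it asserts that $n_t\in\{12,6,3,2\}$ forces $G$ to be of form (3).

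Take invariant factors $4\mid 12$, i.e. $G=\Bbb Z_4\times\Bbb Z_{12}\cong\Bbb Z_4^2\times\Bbb Z_3$.
\begin{itemize}
\item[(a)] $G$ is Cayley $2$-integral. For every inverse-closed $S$, each eigenvalue $\chi(S)=\sum_{s\in S}\chi(s)$ lies in $\Bbb Q(\zeta_{12})$. It is real because $\chi(s^{-1})=\overline{\chi(s)}$. Hence it lies in $\Bbb Q(\zeta_{12})\cap\Bbb R=\Bbb Q(\sqrt{3})$, and you need no appeal to \cite{WAF}.
\item[(b)] $G$ is not of form (3). A group $\Bbb Z_2^r\times\Bbb Z_3^s\times\Bbb Z_{12}^t$ has Sylow $2$-subgroup $\Bbb Z_2^r\times\Bbb Z_4^t$ and Sylow $3$-subgroup $\Bbb Z_3^{s+t}$. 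So its number of cyclic factors of order $4$ never exceeds its $3$-rank, whereas $G$ has two factors of order $4$ and $3$-rank one.
\end{itemize}

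The correct family (3) is $\Bbb Z_2^r\times\Bbb Z_3^s\times\Bbb Z_4^t$, i.e. all abelian $\{2,3\}$-groups of exponent dividing $12$. The same correction is needed in item (3) of Theorem \ref{main} and in the abstract. With that reading your argument is a complete proof. The case $a>b$ then needs no special treatment, since $\Bbb Z_2^r\times\Bbb Z_4^a\times\Bbb Z_3^b$ is directly of the corrected form.
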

\begin{proof}
	By Fundamental Theorem of Finite Abelian Groups, we have $G\cong\Bbb Z_{n_1}\times\cdots\times\Bbb Z_{n_t}$, where $t\geq 1$ and $n_i$s are positive integers greater than $1$ such that $n_1|n_2|\cdots|n_t$.
	Let $G$ be a Cayley $2$-integral group. Then, by Lemma \ref{sbg-qoutient} and Corollary \ref{order}, $n_t\in\{2,3,4,5,6,8,10,12\}$. If $n_t\in\{12,6,3,2\}$, $\{10,5\}$ or $\{8,4\}$ then $G$ is of the form $(3)$, $(1)$ or $(2)$, respectively, for suitable non-negative integers $r,s,t$. The converse direction is clear by Lemma \ref{exp}. This completes the proof.
\end{proof}
\section{Cayley $2$-integrality of some small non-abelian groups}

In this section, we provide a list of some small non-abelian Cayley $2$-integral groups and  non-Cayley $2$-integral groups that will be used in the next section. For a small group $G$, one can check, by a simple program in GAP,  whether $G$ is Cayley $2$-integral by considering all possible inverse-closed generating sets of $G$. However, we give some GAP-free proofs in this paper, by using the following facts:

 It is well-known that $\Cay(G,S)\cong\Cay(G,S^\alpha)$ for all $\alpha\in\Aut(G)$. Thus we have the following remark.
\begin{rem}\label{orbit}
	Let $G$ be a group, $n\in\Bbb N$ and $X_n=\{S\mid 1\notin S=S^{-1}\subseteq G, \langle S\rangle=G,|S|=n\}$. Then $\Aut(G)$ acts on $X_n$ naturally. Let $X_n^{1},\ldots,X_n^{m}$ be all distinct orbits of this action, $x_n^{i}$ be a representative of $X_n^{i}$ for $i=1,\ldots,m$ and $\Gamma_n^i=\Cay(G,x_n^i)$. Then $G$ is a Cayley $2$-integral group if and only if $\deg(\Gamma_n^i)\leq 2$ for all possible $n$ and $i\in\{1,\ldots,m\}$. Also note that we may assume that $n\leq \frac{|G|}{2}$ by Lemma \ref{regular}.
\end{rem} 
\begin{rem}
	 By Remark \ref{rep}, a group $G$ is a Cayley $2$-integral group if and only if there exists a fixed positive non-square integer $m$ such that for every inverse-closed subset $S$ of $G\setminus \{1\}$ and every $\rho\in\textrm{Irr}(G)$, the eigenvalues of $\rho(S)$ lies in $\Bbb Q(\sqrt{m})$. We use this fact, in the next section without referring it.
\end{rem}

\begin{rem}
	Let $\rho$ be an irreducible representation of a group $G$ of degree $d$. By \cite[Proposition 9.14]{JL}, if $z\in Z(G)$ is an involution, then $\rho(z)=I_d$ or $-I_d$, where $I_d$ is the $d\times d$ identity matrix.  If $S$ contains $z$ then the splitting field of   $\Cay(G, S)$ is the same as the splitting field  of $\Cay(G, S\setminus\{z\})$, by \cite[Corollary 7]{AT}. 
	We use this fact  without further  referring.
\end{rem}

\subsection{Some non-abelian Cayley $2$-integral groups $G$ with $|G|\leq 36$}

By Theorem \ref{azhv}, the non-abelian groups $S_3$ and $Dic_{12}$ are Cayley integral and so Cayley $2$-integral. The following lemma is well-known. We use it in the next result.
\begin{lem}\label{aut}
	Let $D_{2n}=\langle a,b\mid
	a^n=b^2=(ab)^2=1\rangle$, $1\leq l\leq n-1$,
	$0\leq s\leq n-1$ and $(l,n)=1$. Then the automorphisms of $D_{2n}$
	are of the form
	\begin{eqnarray*}
		\sigma_{l,s}:&&D_{2n}\rightarrow D_{2n}\\
		&&a^i\mapsto a^{il}\\
		&&a^ib\mapsto a^{il+s}b.
	\end{eqnarray*}
	Furthermore,
	$\Aut(\langle a\rangle)=\{\sigma_l:\langle a\rangle\rightarrow\langle a\rangle \mid (a^{i})^{\sigma_l}=a^{il}, (l,n)=1\}$.
\end{lem}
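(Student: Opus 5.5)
The plan is to prove the two assertions of Lemma \ref{aut} separately: first for the cyclic subgroup $\langle a\rangle$, then for $D_{2n}$ itself. In each case I will show that every automorphism is determined by the images of the generators, and that every admissible choice of images really gives an automorphism. Throughout I assume $n\geq 3$, which is the situation in which the lemma is used. For $n\leq 2$ the group $D_{2n}$ is abelian and the description fails (for instance $\Aut(D_4)\cong S_3$), so this case has to be excluded or treated separately.

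For the cyclic part, note that an endomorphism of $\langle a\rangle\cong\Bbb Z_n$ is determined by the image of $a$. That image must be some $a^l$, and the map $a^i\mapsto a^{il}$ is always a well-defined homomorphism. It is bijective exactly when $a^l$ generates $\langle a\rangle$, that is, when $(l,n)=1$. This gives $\Aut(\langle a\rangle)=\{\sigma_l\mid (l,n)=1\}$.

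For $D_{2n}$, the first step is to observe that $\langle a\rangle$ is characteristic. Indeed, every element of the form $a^ib$ is an involution, because $(a^ib)^2=a^ib a^i b=a^ia^{-i}=1$ using $bab=a^{-1}$. Since $n\geq 3$, the element $a$ has order $n\geq 3$, and an automorphism preserves element orders, so $a^\sigma$ cannot be one of the involutions $a^ib$; hence $a^\sigma\in\langle a\rangle$. As $\langle a\rangle$ is generated by $a$, it follows that $\sigma$ maps $\langle a\rangle$ into itself, and by finiteness onto itself.

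Next I will determine the images of the generators. Since $\sigma$ restricts to an automorphism of $\langle a\rangle$, the cyclic part gives $a^\sigma=a^l$ with $(l,n)=1$. The coset $D_{2n}\setminus\langle a\rangle$ must then also be preserved, so $b^\sigma=a^sb$ for some $0\leq s\leq n-1$. Because $\sigma$ is a homomorphism, these two images force $(a^i)^\sigma=a^{il}$ and $(a^ib)^\sigma=a^{il}a^sb=a^{il+s}b$, which is exactly $\sigma_{l,s}$.

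Conversely, for every $l$ with $(l,n)=1$ and every $s$, I will check that $a\mapsto a^l$, $b\mapsto a^sb$ respects the defining relations of $D_{2n}$. Clearly $(a^l)^n=1$, and $(a^sb)^2=1$ by the involution computation above. For the last relation, $a^la^sb=a^{l+s}b$ is again of the form $a^jb$, so $(a^la^sb)^2=1$. By von Dyck's theorem this assignment extends to an endomorphism of $D_{2n}$. Its image contains $a^l$, hence all of $\langle a\rangle$, and also contains $a^sb$, hence $b$; so it is surjective, and therefore bijective since $D_{2n}$ is finite. The only step needing any care is the characteristic-subgroup observation, which is precisely where the hypothesis $n\geq 3$ is used.
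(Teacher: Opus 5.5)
Your proof is correct and complete. The paper gives no proof of this lemma; it only calls it well known. Your argument is the standard one: $\langle a\rangle$ is characteristic because every element $a^ib$ outside it is an involution while $a$ has order $n\geq 3$, which forces $a\mapsto a^l$ with $(l,n)=1$ and $b\mapsto a^sb$, and von Dyck's theorem together with surjectivity gives the converse.

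Your restriction to $n\geq 3$ is a genuine correction to the statement as printed. For $n=2$, the group $D_4\cong\mathbb{Z}_2^2$ has $6$ automorphisms, while the formula yields only $2$. The restriction costs nothing here, since the paper applies the lemma only to dihedral groups with $n\geq 3$.
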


\begin{lem}\label{dihedral}
	The dihedral group $D_{2k}$, $k\geq 3$, is Cayley $2$-integral for  $k=3,4,5,6$.
\end{lem}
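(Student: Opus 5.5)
The plan is to use Remark \ref{rep} together with the explicit irreducible representations of $D_{2k}$. For $k=3$ there is nothing to do, since $D_6\cong S_3$ is Cayley integral by Theorem \ref{azhv}. For $k\in\{4,5,6\}$ we must find a single quadratic field that contains the eigenvalues of $\rho(S)$ for every inverse-closed $S\subseteq D_{2k}\setminus\{1\}$ and every $\rho\in\textrm{Irr}(D_{2k})$. We aim for $\Bbb Q(\sqrt2)$ when $k=4$, $\Bbb Q(\sqrt5)$ when $k=5$, and $\Bbb Q(\sqrt3)$ when $k=6$.

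\textbf{Linear characters.} These take values $\pm1$, so they contribute only integer eigenvalues.

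\textbf{Two-dimensional representations.} These are $\rho_j$ for $1\le j<k/2$, given by
\[
\rho_j(a)=\begin{pmatrix}\omega^j&0\\0&\omega^{-j}\end{pmatrix},\qquad \rho_j(b)=\begin{pmatrix}0&1\\1&0\end{pmatrix},\qquad \omega=e^{2\pi i/k}.
\]
Write $S=A\cup Bb$, where $A\subseteq\langle a\rangle$ is inverse-closed and $Bb$ is a set of reflections (all of which are involutions, so every reflection subset is allowed). Then
\[
\rho_j(S)=\begin{pmatrix}\alpha&\beta\\ \bar\beta&\alpha\end{pmatrix},\qquad \alpha=\sum_{a^i\in A}\omega^{ij},\qquad \beta=\sum_{a^ib\in Bb}\omega^{ij}.
\]
Here $\alpha$ is real because $A=A^{-1}$. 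The eigenvalues are $\alpha\pm|\beta|$, so it suffices to check two things:
\begin{enumerate}[(i)]
\item $\alpha$ lies in the target field;
\item $|\beta|^2=\beta\bar\beta$ is a rational number whose square root lies in the target field.
\end{enumerate}
Since $\beta\bar\beta$ is an algebraic integer in $\Bbb Q(\omega)^+$, we get $\alpha,\beta\bar\beta\in\Bbb Q(\omega+\omega^{-1})$. This field is $\Bbb Q(\sqrt2)$, $\Bbb Q(\sqrt5)$ or $\Bbb Q$ for $k=4$ (after adjusting), $5$, $6$ respectively. For $k=4$ we have $\omega=i$ and $\alpha\in\Bbb Z$. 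For $k=6$, $\alpha\in\Bbb Z$ since $\omega+\omega^{-1}=1$.

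\textbf{Main obstacle: the square root $\sqrt{\beta\bar\beta}$.} Note that $\beta\bar\beta$ need not be rational for $k=5$, so we need $\sqrt{\beta\bar\beta}$ to lie in the target field, not just $\beta\bar\beta$. I would handle the three cases as follows.
\begin{itemize}
\item \emph{$k=4$.} Only $j=1$ occurs and $\beta=x+yi$ with $x,y\in\Bbb Z$. I would use the reflection-class constraint: $b,a^2b$ contribute $\pm1$ and $ab,a^3b$ contribute $\pm i$. This gives $x,y\in\{-2,\dots,2\}$, hence $|\beta|^2\in\{0,1,2,4,5,8\}$. The value $5$ would force $\sqrt5$, so I would need to recheck carefully. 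I expect $x$ and $y$ to range over $\{-1,0,1\}$ plus $\pm2$ only with $0$ in the other coordinate, in which case $|\beta|^2\in\{0,1,2,4\}$ and everything lies in $\Bbb Q(\sqrt2)$.
\item \emph{$k=6$.} Here $\beta\in\Bbb Z[\omega]$ is a sum of distinct sixth roots of unity (for $j=1$) or cube roots of unity (for $j=2$). Then $|\beta|^2=x^2-xy+y^2$. I would enumerate the subsets up to the automorphisms of Lemma \ref{aut} (these rotate and scale $B$ and do not change $|\beta|$), and verify that $|\beta|^2\in\{0,1,3,4\}$. That puts the eigenvalues in $\Bbb Q(\sqrt3)$.
\item \emph{$k=5$.} Using the automorphisms $\sigma_{l,s}$ of Lemma \ref{aut}, I would reduce $B$ to a few orbit representatives: sizes $0,1,2$ (consecutive or not), and complements. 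For each representative I would check that $|\beta|^2$ is $0$, $1$, $(3\pm\sqrt5)/2$ (i.e. $|1+\omega|^2$ or $|1+\omega^2|^2$, whose square roots $(1\pm\sqrt5)/2$ up to sign lie in $\Bbb Q(\sqrt5)$), or the complementary values. For $|B|\ge3$ I would use $\sum_i\omega^{ij}=0$, which gives $\beta=-\beta'$ with $B'$ the complement. Since the numbers $2\cos(2\pi j/5)$ lie in $\Bbb Q(\sqrt5)$, condition (i) holds for $k=5$ as well.
\end{itemize}

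This finite case check is where I expect the work to lie. It is also where the precise claim, that the norm is a square in the target field, must be confirmed.
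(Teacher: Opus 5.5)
\textbf{Verdict: correct, but by a different route from the paper.}

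\textbf{How the paper argues.} The paper restricts to connected Cayley graphs. It uses the action of $\Aut(D_{2k})$ from Lemma \ref{aut} to list orbit representatives of generating inverse-closed sets of each size $n\le k$. For $k=4$ these are the six sets $S_1,\dots,S_6$, and the paper checks the algebraic degree of each resulting graph directly. The cases $k=5,6$ are left as ``similar''.

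\textbf{How you argue, and what it buys.} You compute the spectrum of every Cayley graph on $D_{2k}$ in closed form, namely $\alpha\pm|\beta|$ from the two-dimensional irreducibles. This gives three advantages:
\begin{enumerate}[(i)]
\item No reduction to connected graphs and no orbit enumeration is needed.
\item All three values of $k$ reduce to one short computation of $|\beta|$ for subset sums of roots of unity.
\item You get a single field for all $S$ at once: $\Bbb Q(\sqrt2)$, $\Bbb Q(\sqrt5)$ and $\Bbb Q(\sqrt3)$ respectively.
\end{enumerate}
The same formula also explains the failures of $D_{16}$, $D_{20}$ and $D_{24}$ in Table \ref{Table23}. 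With $S=\{b,ba\}$ one gets $|\beta|=|1+e^{2\pi i/k}|=2\cos(\pi/k)$, which has degree $4$ for $k=8,10,12$. The paper's enumeration, by contrast, is a finite check that is easy to mechanize, but it does not scale and does not show why the bound holds. I checked your remaining values. For $k=6$ one has $|\beta|^2\in\{0,1,3,4\}$ for both $j=1,2$. For $k=5$ one has $|\beta|\in\{0,1,(\sqrt5\pm1)/2\}$, using sizes $\le2$ together with $\beta_B=-\beta_{B^c}$.

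\textbf{Points to tidy before writing this up.}
\begin{enumerate}[(i)]
\item Your hedging for $k=4$ is unnecessary and partly wrong. Each reflection occurs at most once in $S$. Hence $x=[b\in S]-[a^2b\in S]$ and $y=[ab\in S]-[a^3b\in S]$ both lie in $\{-1,0,1\}$. So $|\beta|^2\in\{0,1,2\}$, and neither $x=\pm2$ nor the values $4,5,8$ ever occur.
\item For $k=4$, $\Bbb Q(\omega+\omega^{-1})=\Bbb Q$, not $\Bbb Q(\sqrt2)$. The $\sqrt2$ enters only through $|\beta|$.
\item Condition (ii) as first stated is false for $k=5$, because $\beta\bar\beta$ is not rational there. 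What you actually verify, and what is needed, is that $|\beta|$ itself lies in the target field.
\item For $k=6$, $j=2$, the number $\beta$ is a sum of cube roots of unity with multiplicities up to $2$, not a sum of distinct ones. The norms are nevertheless still in $\{0,1,3,4\}$, since $|c_0+c_1\zeta+c_2\zeta^2|^2=\tfrac12\sum_{i<l}(c_i-c_l)^2$ with $c_i\in\{0,1,2\}$.
\end{enumerate}
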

\begin{proof}
	Let	$k\in\{3,4,5,6\}$. Since $D_{6}\cong S_3$ is a Cayley integral group,  we may assume that $k\in\{4,5,6\}$. Let  $S=S^{-1}\subseteq D_{2k}\setminus\{1\}$  and  $\Gamma=\Cay(D_{2k},S)$. We may assume that $D_{2k}=\langle S\rangle$. We use Remark \ref{orbit}, and give all the possible representatives of the action of $\Aut(D_{2k})$ on $X_n$'s, $1\leq n\leq k$. For example, we do this for $k=4$. One can prove the remaining cases similarly. Let $k=4$. Then $\Aut(D_{2k})$ has one orbit on $X_2$, two orbits on $X_3$ and three orbits on $X_4$, with representatives
	\begin{eqnarray*}
		&&S_1=\{ab,b\}, S_2=\{ab,a^2b,a^3b\}, S_3=\{a^2,a^2b,a^3b\}, S_4=\{b,ab,a^2b,a^3b\},\\
		&& S_5=\{a^3b,a^2b,a,a^3\}, S_6=\{a^3b,a^2b,ab,a^2\}.
	\end{eqnarray*}
	
	Let $\Gamma_i=\Cay(G,S_i)$. Then $\deg(\Gamma_{2i-1})=2$ and $\deg(\Gamma_{2i})=1$ for $i\in\{1,2,3\}$, which means $D_{8}$ is Cayley $2$-integral.
\end{proof}
\begin{lem}\label{q8z3}
	$Q_8\times\Bbb Z_3$ is Cayley $2$-integral.
\end{lem}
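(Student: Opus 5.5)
We use the representation-theoretic criterion of Remark \ref{rep}. Write $G=Q_8\times\Bbb Z_3$ with $Q_8=\langle i,j\rangle$ and $\Bbb Z_3=\langle c\rangle$. Every irreducible representation of $G$ has the form $\rho\otimes\chi$, where $\rho\in\textrm{Irr}(Q_8)$ and $\chi$ is a linear character of $\Bbb Z_3$ with $\chi(c)\in\{1,\omega,\omega^2\}$ and $\omega=e^{2\pi i/3}$. Our goal is to show that every eigenvalue of $(\rho\otimes\chi)(S)$ lies in the single field $\Bbb Q(\sqrt{-3})$, for every inverse-closed $S\subseteq G\setminus\{1\}$. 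Since all eigenvalues are real, they then lie in $\Bbb Q(\sqrt{-3})\cap\Bbb R=\Bbb Q$. So in fact we expect $G$ to be Cayley integral up to the $\omega$-contributions, and the splitting field to be $\Bbb Q$ or $\Bbb Q(\sqrt{-3})$. In either case the degree is at most $2$.

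\textbf{Linear characters.} For a linear $\rho$, the value $(\rho\otimes\chi)(S)=\sum_{s\in S}\rho(s)\chi(s)$ is a sum of sixth roots of unity, hence lies in $\Bbb Q(\omega)=\Bbb Q(\sqrt{-3})$. It is also real because $S=S^{-1}$, so it is rational.

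\textbf{Faithful $2$-dimensional representation.} Let $\rho$ be the faithful representation of $Q_8$ and write $z=-1$. Decompose $S=\bigcup_{k=0}^{2}S_kc^k$ with $S_k\subseteq Q_8$ and $S_2=S_1^{-1}$. Then
\[
M:=(\rho\otimes\chi)(S)=\rho(S_0)+\omega^{\epsilon}\rho(S_1)+\omega^{-\epsilon}\rho(S_1^{-1}),\qquad \epsilon\in\{0,1,2\}.
\]
Realize $\rho$ inside the quaternions, with $\rho(\pm1)=\pm1$, $\rho(\pm i)=\pm\mathbf i$, and so on. Then $\rho(T)$ for any $T\subseteq Q_8$ is a quaternion $a_0+a_1\mathbf i+a_2\mathbf j+a_3\mathbf k$ with integer coefficients, and $\rho(T^{-1})$ is its conjugate. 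Hence
\[
M=\alpha+\beta\mathbf i+\gamma\mathbf j+\delta\mathbf k,
\]
where $\alpha$ is real and $\beta,\gamma,\delta$ have the form $x+\omega^{\epsilon}y-\omega^{-\epsilon}y$. For $\epsilon\neq0$ this equals $x\pm y\sqrt{-3}$. The coefficient $\alpha$ collects $\rho(S_0)$'s scalar part (real, since $S_0=S_0^{-1}$ kills the imaginary parts) and $2\mathrm{Re}(\omega^\epsilon)\cdot(\text{scalar part of }\rho(S_1))$, both rational.

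So $M-\alpha$ is a pure quaternion with coefficients $u_\ell\in\Bbb Z[\sqrt{-3}]$, and its eigenvalues are $\pm\sqrt{-(u_1^2+u_2^2+u_3^2)}$. Since $M$ is Hermitian, these are real. The main obstacle is to show that this square root always lies in $\Bbb Q$, or at worst in a fixed quadratic field, rather than producing $\sqrt{m}$ with $m$ depending on $S$.

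To handle this, write $u_\ell=x_\ell+y_\ell\sqrt{-3}$ with $x_\ell$ purely imaginary coefficients coming from $S_0$ and $S_1$. Using that $S_0$ is inverse-closed, the pure-quaternion part of $\rho(S_0)$ is $0$ (each of $\pm\mathbf i$ appears together with its inverse). Thus $x_\ell$ comes only from $\rho(S_1)-\rho(S_1)^*$ combined with $\mathrm{Re}\,\omega$, and $y_\ell$ from the $\sqrt{-3}$ part. Carefully, the pure part of $M$ is $\sqrt{-3}$ times an integer pure quaternion $v$. The eigenvalues are then $\pm\sqrt{3(v_1^2+v_2^2+v_3^2)}$, where $v_\ell\in\{-2,\dots,2\}$ are the signed counts of $\pm\mathbf i,\pm\mathbf j,\pm\mathbf k$ in $S_1$.

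It then remains to check the finitely many values of $3(v_1^2+v_2^2+v_3^2)$ with $|v_\ell|\le2$. We need $\sqrt{3N}$, with $N=v_1^2+v_2^2+v_3^2$, always to lie in a fixed field, which forces every nonzero $N$ to be $3$ times a square or $3\cdot$(a fixed square class). I expect this to be the delicate point. If it fails, we must instead exploit a constraint that $|S_1\cap\{x,x^{-1}\}|$-type counts impose; a direct enumeration of $\rho(S_1)$ up to $\Aut(Q_8)$ (Remark \ref{orbit}) is the fallback. This is a finite, routine case check.
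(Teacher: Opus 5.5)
Your treatment of the faithful constituent is the paper's computation in quaternion form: the paper uses the anticommuting matrices $\rho_5(x),\rho_5(y),\rho_5(xy)$ where you use $\mathbf{i},\mathbf{j},\mathbf{k}$. You correctly reach eigenvalues $\alpha\pm\sqrt{3N}$ with $N=v_1^2+v_2^2+v_3^2$. However, the proposal has a genuine gap at the end, because it misreads what has to be proved.

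Cayley $2$-integrality requires $\deg\Cay(G,S)\le 2$ for each inverse-closed $S$ separately. It does not require a quadratic field independent of $S$, and certainly not that all eigenvalues lie in $\mathbb{Q}(\sqrt{-3})\cap\mathbb{R}=\mathbb{Q}$. Your opening goal is false:
\begin{itemize}
\item For $S=\{(i,c),(-i,c^2)\}$ the graph is two $12$-cycles, so $\sqrt3$ is an eigenvalue.
\item Consistently with this, $G$ is not in the list of Theorem \ref{azhv}.
\item A graph's splitting field is real, so it can never be $\mathbb{Q}(\sqrt{-3})$.
\end{itemize}

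The ``delicate point'' you flag cannot be established either, because it is false. $S_1=\{i\}$ gives $N=1$ and the field $\mathbb{Q}(\sqrt3)$, while $S_1=\{i,j\}$ gives $N=2$ and the field $\mathbb{Q}(\sqrt6)$. No enumeration up to $\Aut(G)$ will remove that. Also, each $v_\ell$ is the indicator of a unit lying in $S_1$ minus the indicator of its negative lying in $S_1$. Hence $v_\ell\in\{-1,0,1\}$ and $N\in\{0,1,2,3\}$, not values up to $12$.

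The missing step is short. Fix $S$.
\begin{itemize}
\item The linear constituents and the constituent with $\epsilon=0$ give rational eigenvalues, as you showed.
\item For $\epsilon=1$ and $\epsilon=2$ you get $M=\alpha\pm\sqrt{-3}\,v$ with the same rational $\alpha$ and the same integer pure quaternion $v$. Only the sign of $\omega^{\epsilon}-\omega^{-\epsilon}=\pm\sqrt{-3}$ changes.
\item So both constituents have eigenvalues $\alpha\pm\sqrt{3N(S)}$.
\end{itemize}
Hence $\mathrm{SF}(\Cay(G,S))=\mathbb{Q}(\sqrt{3N(S)})$ has degree at most $2$ for every $S$. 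This is exactly how the paper concludes; the field depends on $S$, and that is all the definition asks.
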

\begin{proof}
	We know that $H:=\langle x,y\mid x^4=1, x^2=y^2, y^{-1}xy=x^{-1}\rangle\cong Q_8$ has $5$ inequivalent irreducible representations $\rho_1,\ldots,\rho_5$, and exactly one of them, say $\rho_5$ has degree $2$ and the others are all linear. More precisely, 
	\begin{eqnarray*}
		&&\rho_1:x\mapsto 1,~~y\mapsto 1\\
		&&\rho_2: x\mapsto -1,~~y\mapsto 1\\
		&&\rho_3: x\mapsto 1,~~y\mapsto -1\\
		&&\rho_4: x\mapsto -1,~~y\mapsto -1
	\end{eqnarray*}
	and 
	\begin{eqnarray*}
		\rho_5:x\mapsto\begin{bmatrix}
			0&-1\\
			1&0
		\end{bmatrix},~~y\mapsto\begin{bmatrix}
			\textbf{i}&0\\
			0&-\textbf{i}
		\end{bmatrix},
	\end{eqnarray*}
	where $\textbf{i}=\sqrt{-1}$. Furthermore, the conjugacy classes of $H$ are $\{1\}$, $\{x,x^{-1}\}$, $\{y,y^{-1}\}$, $\{x^2\}$ and $\{xy,xy^3\}$. Also $K:=\langle a\rangle\cong \Bbb Z_3$ has three inequivalent irreducible represenations $\psi_0,\psi_1,\psi_2$, where $\psi_l(a^j)=e^{\frac{2\pi \textbf{i}lj}{3}}$   for $l=0,1,2$. Let $G=H\times K$. Then the inequivalent irreducible representations of $G$ are
	$\rho_i\otimes\psi_j$, where $i=1,\ldots,5$ and $j=0,1,2$ and $(\rho_i\otimes\psi_j)(h,k)=\rho_i(h)\psi_j(k)$ for all $(h,k)\in G$.
	
	Let $S$ be an inverse-closed subset of $G$, $(1,1)\notin S$ and $\Gamma=\Cay(G,S)$. Since $(1,1)\notin S$ and $S=S^{-1}$, there exists a positive integer $m$ such that $S=(\bigcup_{l=1}^m S_l)\cup T$, where $T=\varnothing$ or $\{(x^2,1)\}$, and $S_l=\{(z_l,w_l),(z_l^{-1},w_l^{-1})\}$ is a $2$-element set for each $l$.  Let $\lambda$ be an eigenvalue of $\Gamma$. Then, $\lambda$ is an eigenvalue of $B_{i,j}:=(\rho_i\otimes\psi_j)(S)=\sum_{s\in S}(\rho_i\otimes\psi_j)(s)$ for some $i\in\{1,\ldots,5\}$ and $j\in\{0,1,2\}$. Since for each $1\leq i\leq 4$, $\rho_i(x^2)=1$ and $\rho_5(x^2)=-I_{2\times 2}$, we may assume that $T=\varnothing$, or equivalently $S=\bigcup_{l=1}^m S_l$. Let $B_{i,j,l}=(\rho_i\otimes\psi_j)(S_l)$. Then $B_{i,j}=\sum_{l=1}^mB_{i,j,l}$. If $i\neq 5$ then $B_{i,j,l}(S_l)=\rho_i(z_l)(\psi_j(w_l)+\psi_j(w_l^{-1}))=[b]$ is a $1\times 1$ matrix with entry $b\in\{-2,-1,1,2\}$. Hence, if $i\neq 5$ then for each $j$, $B_{i,j}$ is a $1\times 1$ matrix with an integer entry and so in this case $-|S|\leq \lambda\leq |S|$ is an integer. Now we assume that $i=5$ and consider $B_{5,j}$ where $j=0,1,2$. We have
	$B_{5,0,l}=\rho_5(z_l)+\rho_5(z_l^{-1})$. By considering all possibilities for $z_l$, we have $B_{5,0,l}=kI_2$, where $k\in\{-2,0,2\}$ and $I_2$ is the $2\times 2$ identity matrix. This means that if $(i,j)=(5,0)$ then $\lambda$ is an integer. Also $B_{5,1,l}=\rho_5(z_l)\psi_1(w_l)+\rho_5(z_l^{-1})\psi_1(w_l^{-1})$ is equal to one of the matrices $\pm I_2$, $O_2$, $\pm \textbf{i}\sqrt{3}\rho_5(x)$, $\pm \textbf{i}\sqrt{3}\rho_5(y)$ or $\pm \textbf{i}\sqrt{3}\rho_5(xy)$, where $O_2$ is the all zero $2\times 2$ matrix. So there exist $\alpha,\beta,\gamma,\delta\in\Bbb Z$ such that
	\begin{eqnarray*}
		B_{5,1}&=&\alpha I_2+\textbf{i}\beta\sqrt{3}\rho_5(x)+\textbf{i}\gamma\sqrt{3}\rho_5(y)+\textbf{i}\delta\sqrt{3}\rho_5(xy)\\
		&=&\alpha I_2+\sqrt{3}\begin{bmatrix}
			-\gamma&-\beta\textbf{i}-\delta\\
			\beta\textbf{i}-\delta&\gamma
		\end{bmatrix}.
	\end{eqnarray*}
	Since the right-hand matrix in the above equality has eigenvalues $\pm\sqrt{\beta^2+\gamma^2+\delta^2}$, the eigenvalues of $B_{5,1}$ are
	\[\alpha\pm \sqrt{3(\beta^2+\gamma^2+\delta^2)}.\] 
	Since $\psi_2(a^{i})=\psi_1(a^{-i})$, for $i=0,1,2$, we have $B_{5,2,l}=\rho_5(z_l)\psi_1(w_l^{-1})+\rho_5(z_l^{-1})\psi_1(w_l)$. If $z_l\neq 1,x^2$ then $\rho_5(z_l)=-\rho_5(z_l^{-1})$ and $B_{5,2,l}=-B_{5,1,l}$. If $z_l=1$ then $\rho_5(z_l)=\rho_5(z_l^{-1})=I_2$ and $B_{5,2,l}=B_{5,1,l}$. If $z_l=x^2$ then $\rho_5(z_l)=\rho_5(z_l^{-1})=-I_2$ and again $B_{5,2,l}=B_{5,1,l}$. This means that the splitting field of $B_{5,2}$ and $B_{5,1}$ are same, and so $\Bbb {SF}(\Gamma)=\Bbb Q(\sqrt{3(\beta^2+\gamma^2+\delta^2)})$. This proves that $Q_8\times\Bbb Z_3$ is Cayley $2$-integral as desired.
\end{proof}

\begin{lem}\label{32yes}
	The following non-abelian groups of order 32 are Cayley $2$-integral:
	\begin{itemize}
		\item[(1)] $G_{32,50}=\langle a,b,c,d\mid a^2=b^4=d^2=c^2b^2=cc^{b^{-1}}=(ab)^2=dd^b=a^ca=dd^c=a^db^2a=1\rangle$,
		\item[(2)] $G_{32,35}=\langle a,b,c\mid b^4=c^4=1, a^2=b^{-2}, ab=b^{-1}a,ac=c^{-1}a,bc=cb\rangle$,
		\item[(3)] $G_{32,26}=\langle a,b,c\mid  b^4=a^4=c^4=1, ab=b^{-1}a, ca=ac, cb=bc\rangle\cong\Bbb Z_4\times Q_8$,
		\item[(4)] $G_{32,23}=\langle a,b,c\mid c^2=b^4=a^4=b^ab=cc^a= cc^b=a^2b^{-1}a^2b=(b^{-1}a^{-2}b^{-1})^2=1 \rangle$.
		%	\item[(5)] ????
	\end{itemize}
\end{lem}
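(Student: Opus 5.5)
We follow the representation-theoretic strategy of Lemma \ref{q8z3}. Each of the four groups $G$ has order $32$, so every irreducible representation has degree $1$ or $2$. The linear representations factor through $G/G'$, which in each case is an elementary abelian or exponent-$4$ abelian $2$-group. Hence for a linear $\chi$ and inverse-closed $S$, the value $\chi(S)=\sum_{s\in S}\chi(s)$ is a sum of terms $\chi(s)+\chi(s^{-1})\in\{-2,0,2\}$ together with real values $\pm1$ from involutions. All linear contributions are therefore integers, and only the degree-$2$ representations $\rho$ need attention. By Remark \ref{rep}, it then suffices to show that there is one fixed non-square $m$ (we expect $m=2$) such that the eigenvalues of $\rho(S)$ lie in $\Bbb Q(\sqrt m)$ for every degree-$2$ irreducible $\rho$ and every $S$.

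For each group, we first list the degree-$2$ irreducible representations explicitly, using the presentations given in the statement. Each group is a central product or extension built from $Q_8$ or $D_8$ with a $\Bbb Z_4$ or $\Bbb Z_2$ factor. Consequently each such $\rho$ has image generated by matrices of the form $\zeta M$, where $M$ lies in the image of the standard $2$-dimensional representation of $Q_8$ or $D_8$ (Pauli-type matrices $I_2$, $\rho_5(x)$, $\rho_5(y)$, $\rho_5(xy)$ up to the appropriate scalars) and $\zeta$ is a fourth root of unity.

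Next we decompose $S$ into pairs $\{g,g^{-1}\}$ plus central involutions. Central involutions only shift $\rho(S)$ by $\pm I_2$, and non-central involutions contribute a single traceless Hermitian Pauli-type matrix. For each pair we compute $\rho(g)+\rho(g^{-1})$. Since $\rho(g)$ is unitary of order dividing $4$ or $8$, this sum is either a scalar in $\{0,\pm2\}$ times $I_2$, or $\zeta M+\bar\zeta M^{*}$. In the latter case we get an integer combination of the Hermitian Pauli matrices $\sigma_1,\sigma_2,\sigma_3$, possibly with coefficients in $\Bbb Z[\sqrt2]$ coming from elements of order $8$ in the image. The result is
\[
\rho(S)=\alpha I_2+\beta\sigma_1+\gamma\sigma_2+\delta\sigma_3 .
\]
Its eigenvalues are $\alpha\pm\sqrt{\beta^2+\gamma^2+\delta^2}$. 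As in Lemma \ref{q8z3}, we then have to check that the radicand is always a rational square times a fixed squarefree $m$, and this is where the real work lies.

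The main obstacle is exactly this uniformity: showing that the $\sigma_i$-coefficients are all integers, or all integer multiples of a common $\sqrt m$ with $\alpha$ rational, so that no mixed terms such as $(a+b\sqrt2)^2$ force a degree-$4$ field. We would handle it group by group. First, we check whether $\exp(G)=4$; in that case $\rho(g)+\rho(g^{-1})$ has integer Pauli coefficients, and the eigenvalues lie in $\Bbb Q(\sqrt{N})$ for the integer $N=\beta^2+\gamma^2+\delta^2$, which depends on $S$. Because $m$ must be fixed, this alone is insufficient. We would therefore show further that for each $\rho$ only two of the three Pauli directions can occur with nonzero coefficient, or that the relevant coefficients are forced to share parity in a way that makes $N$ a square or twice a square. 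If this fails, we fall back on Remark \ref{orbit} and enumerate $\Aut(G)$-orbit representatives of generating inverse-closed sets.
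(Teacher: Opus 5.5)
Your plan has two genuine gaps. First, the opening claim that every irreducible representation of these groups has degree $1$ or $2$ is false for $G_{32,50}$. That group is the extraspecial group $2^{1+4}_-$. Here $Z(G)=G'=\Phi(G)=\langle b^2\rangle$, and each non-central $g$ has exactly the two conjugates $g,gb^2$. So there are $2+30/2=17$ classes, $16$ linear characters and a single faithful irreducible $\varphi$ of degree $4$ (since $16+4^2=32$). For the only representation of $G_{32,50}$ that matters, $\rho(S)$ is therefore $4\times 4$, and your formula $\alpha\pm\sqrt{\beta^2+\gamma^2+\delta^2}$ is unavailable.

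What works instead is the following. Every element of order $4$ squares to $b^2$ and $\varphi(b^2)=-I_4$, so $\varphi(g)+\varphi(g^{-1})=0$ and $\varphi(x)+\varphi(xb^2)=0$. Thus $\varphi(S)=\epsilon I_4+\sum_{i\in I}\pm\varphi(x_i)$, where $\epsilon\in\{0,-1\}$ and the $x_i$ are non-central involutions from distinct cosets of $Z(G)$, with $|I|\le 5$. Two such $x_i,x_j$ never commute, since otherwise $\langle x_i,x_j,b^2\rangle\cong\Bbb Z_2^3$, whereas $2^{1+4}_-$ has $2$-rank $2$. Hence $x_ix_j=x_jx_ib^2$, and $\varphi(x_i),\varphi(x_j)$ anticommute. (With the matrices in the paper's proof, $\varphi(a),\varphi(d),\varphi(ab),\varphi(abc),\varphi(acd)$ are, up to sign, $\sigma_1\otimes I_2$, $\sigma_3\otimes I_2$, $\sigma_2\otimes\sigma_2$, $\sigma_2\otimes\sigma_1$, $\sigma_2\otimes\sigma_3$.) Therefore $(\varphi(S)-\epsilon I_4)^2=|I|\,I_4$ and the eigenvalues are $\epsilon\pm\sqrt{|I|}$. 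This recovers exactly the paper's list $(x^2-k)^2$, $1\le k\le 5$.

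Second, the target you reduce to is false. You ask for a single $m$ (you expect $m=2$) that works for every $S$. The definition only requires $\deg(\Cay(G,S))\le 2$ for each $S$ separately, and no uniform $m$ exists. The computation above already gives $\Bbb Q(\sqrt2)$, $\Bbb Q(\sqrt3)$, $\Bbb Q(\sqrt5)$ for $|I|=2,3,5$. Similarly, take $Q_8\times\langle c\rangle\cong\Bbb Z_4\times Q_8$, with $x,y,\rho_5$ as in the proof of Lemma \ref{q8z3} and $\psi(c)=\textbf{i}$. For $S=\{(xc)^{\pm1},(yc)^{\pm1}\}$ one gets $(\rho_5\otimes\psi)(S)=2\sigma_2-2\sigma_3$, with eigenvalues $\pm\sqrt8$. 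After adjoining $(xyc)^{\pm1}$ it becomes $-2\sigma_1+2\sigma_2-2\sigma_3$, with eigenvalues $\pm\sqrt{12}$. So forcing $N=\beta^2+\gamma^2+\delta^2$ to be a square or twice a square cannot succeed.

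What must be proved has the quantifiers the other way round: for each fixed $S$, the radicands produced by the different nonlinear irreducible representations must lie in one square class. Your plan never addresses this cross-representation compatibility. Yet this is exactly where groups fail: in Case 3 of Lemma \ref{nilpotent}, a single $S$ yields $\sqrt{15}$ and $\sqrt3$ in two representations. It is also what the paper's case-by-case lists certify, e.g.\ $(x^2-4x-4)(x^2+4x-4)$ for $G_{32,35}$, two different quadratics with the same discriminant. (Incidentally, all four groups have exponent $4$, so the $\Bbb Z[\sqrt2]$ coefficients you worry about never arise.)
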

\begin{proof}
	$(1)$ Let $S$ be an inverse-closed subset of $G=G_{32,50}$ and $1\notin S$. Then
	$S$ is a union of the following sets

	$$\{ a \}, \{ b^2 \}, \{ d \}, \{ ab \}, \{ ab^2 \},\{ db^2 \},	\{ abc \},\{ ab^3 \},\{ acd \}, \{ abcb^2 \}, \{ acdb^2 \},$$
	$$ 
	\{ b, b^3\}, \{ c, cb^2 \},\{ ac, acb^2 \},
	\{ ad, adb^2 \}, \{ bc, bcb^2 \}, \{ bd, bdb^2 \},$$
	$$\{ cd, cdb^2 \}, \{ abd, abdb^2 \},   \{ bcd, bcdb^2 \},
	\{ abcd, abcdb^2 \}. $$
	By GAP, we see that $G$ has exactly $17$ irreducible representations, one of them, say $\varphi$, has degree $4$ and the others have degrees $1$ and are integer-valued. We claim that the eigenvalues of $\varphi(S)$ are in $\Bbb Q[\sqrt{m}]$ for some fixed positive integer $m$, which  proves $\Cay(G,S)$ is Cayley $2$-integral.  By GAP, we have
	\begin{equation*}
		\varphi(a)=\begin{bmatrix}
			0& 0&1&0 \\
			0&0&0&1\\
			1&0&0&0\\
			0&1&0&0
		\end{bmatrix},~~	\varphi(b)=\begin{bmatrix}
			0& 1&0&0 \\
			-1&0&0&0\\
			0&0&0&-1\\
			0&0&1&0
		\end{bmatrix},
	\end{equation*}
	\begin{equation*}
		\varphi(c)=\begin{bmatrix}
			-\textbf{i}& 0&0&0 \\
			0&\textbf{i}&0&0\\
			0&0&-\textbf{i}&0\\
			0&0&0&\textbf{i}
		\end{bmatrix},~~	\varphi(d)=\begin{bmatrix}
			-1& 0&0&0 \\
			0&-1&0&0\\
			0&0&1&0\\
			0&0&0&1
		\end{bmatrix},
	\end{equation*}
	where $\textbf{i}=\sqrt{-1}$. Then $\varphi(b^2)=-I$, where $I$ is the $4\times 4$ identity matrix. Hence if $X$ is a set of size $2$ chosen from the above sets, then $\varphi(X)=O$, where $O$ is the $4\times 4$ zero matrix. Let $S=S_1\cup S_2$, where $S_j$ is a union of the above sets of size $j$, $j=1,2$. Then $\varphi(S)=\varphi(S_1)+\varphi(S_2)=\varphi(S_1)+O=\varphi(S_1)$. Since $\varphi(b^2)=-I$, we may assume that $b^2\notin S_1$. Hence $S_1$ is a union of the sets 
	$$\{ a \},  \{ ab^2 \},\{ d \}, \{ db^2 \}, \{ ab \}, \{ ab^3 \},	\{ abc \}, \{ abcb^2 \}, \{ acd \},  \{ acdb^2 \}.$$
	Since $\varphi(x)+\varphi(xb^2)=O$ for any $x\in G$, we may assume that $S_1$ has at most $5$ elements and if $x\in S_1$ then $xb^2\notin S_1$. By considering all possibilities for $S$ and by a tedious computation, one can see that the characteristic polynomial of $\varphi(S)$ is one of the following
	\[ (x-1)^2(x+1)^2,~ (x^2-2)^2,~(x^2-3)^2, (x-2)^2(x+2)^2,~(x^2-5)^2,\]
	which proves our claim and completes the proof.
	
	$(2)$. One can check that $Z(G)=\{1,a^2,c^2,a^2c^2\}$.  Furthermore, the only non-identity elements of $G$ of order $2$ are $a^2,c^2,a^2c^2$.  Let $S$ be an inverse-closed subset of $G=G_{32,35}$ and $1\notin S$. Then
	$S$ is a union of the following sets
	$$\{a^2\},~\{c^2\},~\{a^2c^2\},$$
	$$
	\{a, a^3\},
	\{b, ba^2\},
	\{ c, c^3\},
	\{ab, aba^2\},
	\{ac, aca^2\},
	\{ ac^2, a^3c^2\},
	\{ bc, bca^2c^2\},$$
	$$
	\{bc^2, ba^2c^2\},
	\{ca^2, ca^2c^2\},
	\{abc, abca^2\},
	\{abc^2, aba^2c^2\},
	\{ac^3, ac^3a^2\},
	\{bca^2, bc^3\},
	\{abc^3, abc^3a^2\}.
	$$
	
	Clearly, if $\{x_1,x_2\}$ is a set of the above list, then $x_2=x_1y$, where $y\in\{a^2,c^2,a^2c^2\}$. Let $\rho$ be an irreducible representation of $G$ of degree $d$, and $S=X\cup Y$, where $X$ and $Y$ are a union of the sets of size $2$ and size $1$, chosen from the above list,  respectively. Then $\rho(S)=\rho(X)+\rho(Y)$. Since the elements of $Y$ are involutions, $\rho(Y)$ is an integer multiple of  $I$, where $I$ is the $d\times d$ identity matrix. So it is enough to find the eigenvalues of $\rho(X)$. By considering all possibilities for $X$ and all irreducible representations of $G$, one can see that, the characteristic polynomial of $\Cay(G,X)$ is of the form $f_1(x)f_2(x)$ where $f_1(x)$ is a product of factors of degree $1$, and either $f_2(x)=1$ or $f_2(x)$ is one of the followings:
	\[(x^2-8)^4,~(x^2-20)^2,~(x^2-32)^2,(x^2+4x-16)^2,~(x^2-4x-16)^2,\]
	\[(x^2-4x-28)^2,(x^2+4x-28)^2,~(x^2-4x-4)(x^2+4x-4).\]
	This implies that $G$ is Cayley $2$-integral.
	
	$(3)$ One can see that $Z(G)=\{1,c,b^2,a^2,cb^2,ca^2,b^2a^2,cb^2a^2\}$.
	Let $S$ be an inverse-closed subset of $G=G_{32,26}$ and $1\notin S$. Then
	$S$ is a union of the following sets
	\[\{a^2\},\{b^2\},\{a^2b^2\},\]
	\[\  \{a, a^3\}, \{b, b^3\} ,\{ c, cb^2a^2 \},\{ ab, aba^2 \},\{ ac, acb^2 \},
	\{ ab^2, ab^2a^2 \},\{ bc, bca^2 \},\]
	\[\{ ba^2, b^3a^2 \},\{ cb^2, ca^2 \},\{ abc, abcb^2 \},
	\{ ab^3, ab^3a^2 \},\{ a^3c, a^3cb^2 \},\{ b^3c, b^3ca^2 \},\{ a^3bc, a^3bcb^2\}.
	\]
	Clearly, if $\{x_1,x_2\}$ is a set of the above list, then $x_2=x_1y$, where $y\in\{a^2,b^2,a^2b^2\}$. Let $\rho$ be an irreducible representation of $G$ of degree $d$, and $S=X\cup Y$, where $X$ and $Y$ are a union of the sets of size $2$ and size $1$, chosen from the above list,  respectively. Then $\rho(S)=\rho(X)+\rho(Y)$. Since the elements of $Y$ are involutions, $\rho(Y)$ is an integer multiple of $I$, where $I$ is the $d\times d$ identity matrix. So it is enough to find the eigenvalues of $\rho(X)$. By considering all possibilities for $X$ and all irreducible representations of $G$, one can see that, the characteristic polynomial of $\Cay(G,X)$ is of the form $f_1(x)f_2(x)$ where $f_1(x)$ is a product of factors of degree $1$, and either $f_2(x)=1$ or $f_2(x)$ is one of the followings:
	\[(x^2-8)^4,~(x^2-12)^4.\]
	This implies that $G$ is Cayley $2$-integral.
	
	$(4)$
	One can see that $Z(G)=\{1,c,b^2,a^2,cb^2,ca^2,b^2a^2,cb^2a^2\}$.
	Let $S$ be an inverse-closed subset of $G=G_{32,23}$ and $1\notin S$. Then
	$S$ is a union of the following sets
	\[\{c\}, \{b^2\}, \{a^2\}, \{cb^2\}, \{ca^2\}, \{b^2a^2\}, \{cb^2a^2\},\]
	\[ \{ a, a^3 \}, \{ b, b^3 \}, \{ ab, aba^2 \}, \{ ac, aca^2 \}, \{ ab^2, ab^2a^2 \}, \{ bc, bcb^2 \},\]
	\[\{ ba^2, b^3a^2 \}, \{ abc, abca^2 \}, \{ ab^3, ab^3a^2 \},
	\{ acb^2, acb^2a^2 \}, \{ bca^2, bcb^2a^2 \}, \{ abcb^2, abcb^2a^2 \}.
	\]
	Clearly, if $\{x_1,x_2\}$ is a set of the above list, then $x_2=x_1y$, where $y\in\{a^2,b^2,a^2b^2\}$. Let $\rho$ be an irreducible representation of $G$ of degree $d$, and $S=X\cup Y$, where $X$ and $Y$ are a union of the sets of size $2$ and size $1$, chosen from the above list,  respectively. Then $\rho(S)=\rho(X)+\rho(Y)$. Since the elements of $Y$ are involutions, $\rho(Y)$ is an integer multiple of $I$, where $I$ is the $d\times d$ identity matrix. So it is enough to find the eigenvalues of $\rho(X)$. By considering all possibilities for $X$ and all irreducible representations of $G$, one can see that, the characteristic polynomial of $\Cay(G,X)$ is of the form $f_1(x)f_2(x)$ where $f_1(x)$ is a product of factors of degree $1$, and either $f_2(x)=1$ or $f_2(x)$ is one of the followings:
	\[(x^2-8)^4,~(x^2-20)^2,~(x^2-32)^2.\]
	This implies that $G$ is Cayley $2$-integral.
\end{proof}

\begin{lem}\label{remains}
	The following groups are Cayley $2$-integral:
	\begin{itemize}
	
		\item[(1)] $G_{12,3}=\langle a,b\mid   b^2=a^3=(ba)^3=1 \rangle\cong A_4$,
		\item[(2)] $\langle a,b,c\mid a^4=b^2=c^2=1,ab=ba,bc=cb,cac=ab\rangle\cong(\Bbb Z_4\times\Bbb Z_2)\rtimes\Bbb Z_2$,
	\item[(3)] $\langle a,b\mid a^4=b^4=1, bab^{-1}=a^3\rangle\cong\Bbb Z_4\rtimes\Bbb Z_4$,
	\item[(4)] $\langle a,b\mid a^8=b^2=1, bab=a^5\rangle\cong\Bbb Z_8\rtimes\Bbb Z_2$,
	\item[(5)]$\langle a,b\mid a^8=1,a^4=b^2,a^b=a^{-1}\rangle\cong Q_{16}$,
	\item[(6)] $\langle a,b,c\mid a^4=b^2=c^2=(ab)^2=1, ac=ca, bc=cb\rangle\cong D_8\times\Bbb Z_2$,
	%\item[(11)] $\langle a,b,c\mid a^4=c^2=1, b^2=a^2, a^b=a^{-1}, ac=ca,bc=cb\rangle\cong Q_8\times\Bbb Z_2$
	\item[(7)] $\langle a,b,c\mid a^4=b^2=1,a^2=c^2, bab^{-1}=a^{-1}, bc=cb,ac=ca\rangle\cong (\Bbb Z_4\times\Bbb Z_2)\rtimes\Bbb Z_2$,
		\item[(8)] $G_{18,4}=\langle a,b,c\mid  a^2=b^3=c^3=(ab)^2=(ac)^2=c^{-1}c^b=1 \rangle\cong (\Bbb Z_3\times \Bbb Z_3)\rtimes \Bbb Z_2$,
			\item[(9)] $Dic_{20}=\langle a,b\mid a^{10}=1, b^2=a^5, bab^{-1}=a^{-1}\rangle$,
		\item[(10)] $G_{24,4}=\langle a,b,c\mid  c^3=b^2a^2=baba^{-1}=a^4=c^ac=c^{-1}c^b=(a^{-1}c)^2a^{-2}=1 \rangle \cong\Bbb Z_3\rtimes Q_8$,
		\item[(11)] $G_{24,7}=\langle a,b,c\mid b^2=c^3=a^4=c^ac=bb^a=b^cb=1 \rangle\cong\Bbb Z_2\times(\Bbb Z_3\rtimes \Bbb Z_4)$,
		\item[(12)] $G_{36,7}=\langle a,b,c\mid  b^3=c^3=a^4=b^ab=c^ac= c^{-1}c^b=1 \rangle \cong(\Bbb Z_3\times \Bbb Z_3)\rtimes \Bbb Z_4$.

		%\item[(5)] $\langle a,b\mid a^4=1, b^2=a^2, a^b=a^{-1}\rangle\cong Q_8$,
	
	\end{itemize}
\end{lem}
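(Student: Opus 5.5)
The plan is to run each of the twelve groups through the same machinery used in Lemmas \ref{q8z3} and \ref{32yes}. By Remark \ref{rep}, it suffices to show that for a fixed non-square $m$ (depending on $S$), the eigenvalues of $\rho(S)$ lie in $\Bbb Q(\sqrt{m})$ for every $\rho\in\textrm{Irr}(G)$. Linear characters need only brief treatment. For a group of exponent $e$, a linear character $\chi$ gives $\chi(S)$ as a sum of terms $\chi(s)+\chi(s^{-1})$, which are real cyclotomic integers of conductor dividing $e$. Every group in the list has exponent in $\{2,3,4,6,8,10,12\}$, so these values lie in $\Bbb Q$, $\Bbb Q(\sqrt2)$, $\Bbb Q(\sqrt5)$ or $\Bbb Q(\sqrt3)$, according to whether elements of order $8$, $5$ (or $10$) or $12$ occur. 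For most groups listed, $G/G'$ has exponent at most $4$ or $6$, so the linear part is actually integral.

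For the higher-degree representations, I will work group by group in two steps.
\begin{enumerate}[(i)]
\item Write explicit matrices for the irreducible representations of degree at least $2$.
\item Decompose $S$ into the basic pieces $\{x,x^{-1}\}$ together with the involutions, exactly as in Lemma \ref{32yes}. Central involutions act as $\pm I$, so they can be discarded.
\end{enumerate}
The key observation, as in Lemma \ref{q8z3}, is that for each piece $\rho(x)+\rho(x^{-1})$ is a scalar multiple, with rational or $\sqrt{d}$-type coefficient, of one of a small family of matrices. Summing these gives $\rho(S)=\alpha I+\sqrt{d}\,M$, where $d$ is fixed by the group and $M$ is a traceless integral combination whose square is scalar. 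For the $2$-groups (2)--(7), the degree-$2$ representations factor through $D_8$, $Q_8$, $D_{16}$, $Q_{16}$ or $SD$-type quotients, and Lemma \ref{dihedral} with $k=4,8$-type computations applies. For the dihedral-like groups $G_{18,4}$, $G_{24,4}$, $G_{24,7}$, $G_{36,7}$ and $Dic_{20}$, the degree-$2$ representations are induced from the normal abelian subgroup of index $2$ or $4$. This gives matrices whose diagonal entries are real cyclotomic sums and whose off-diagonal entries come from the complement, and the characteristic polynomial is a quadratic whose discriminant I will compute directly. For $A_4$, the single degree-$3$ representation has $\rho(S)$ a sum of permutation-signed matrices coming from $V_4$ and $3$-cycles. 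I will reduce $S\cap V_4$ to a diagonal integer matrix and the $3$-cycle part to a circulant-type matrix, then check the cubic characteristic polynomial has a rational root.

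Where a uniform argument is awkward, I will use Remark \ref{orbit}: list the $\Aut(G)$-orbit representatives of inverse-closed generating sets of size at most $|G|/2$, and check the characteristic polynomials of the representing matrices directly, as the paper does for $D_8$ in Lemma \ref{dihedral}.

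The main obstacle is the uniformity of $m$ across different irreducible representations for the same $S$. Each $\rho(S)$ may individually have quadratic eigenvalues, but the square roots from different $\rho$ must generate the same field. I will handle this by showing that Galois-conjugate representations (for instance $\rho$ and $\rho\otimes\chi$, or $\rho$ and its image under $\zeta\mapsto\zeta^{-1}$) give matrices $B$ and $\pm B$ or $\sigma(B)$, as in the $B_{5,1}$ versus $B_{5,2}$ comparison in Lemma \ref{q8z3}. The representations left over are rational-valued, so they contribute eigenvalues in one field. For $A_4$ and $G_{36,7}$, where this is least obvious, I will fall back on the orbit enumeration.
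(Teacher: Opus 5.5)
Your overall route, computing $\rho(S)$ for every irreducible $\rho$ with $\Aut(G)$-orbit enumeration as a fallback, is the same finite verification the paper has in mind. The gap is in the step you correctly call the crux: your mechanism for uniformity of $m$ does not work.

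\textbf{The uniformity argument fails.} A rational-valued irreducible $\rho$ only tells you that $\rho(S)$ has a rational characteristic polynomial. For $\deg\rho=2$, its eigenvalues therefore lie in \emph{some} quadratic field, which may depend on $\rho$ and $S$. Distinct rational-valued irreducibles are never Galois conjugate. Twisting $\rho$ by a linear character is not a Galois conjugation, and it does not send $\rho(S)$ to $\pm\rho(S)$ or $\sigma(\rho(S))$.

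Item (6) already shows this. Write $S=S_0\cup S_1c$ with $S_0,S_1\subseteq\langle a,b\rangle\cong D_8$. The two degree-$2$ irreducibles $\rho\otimes 1$ and $\rho\otimes\mu$ give $\rho(S_0)\pm\rho(S_1)$. These equal $\alpha I+\beta X+\gamma Y$ and $\alpha' I+\beta' X+\gamma' Y$, where $X=\rho(b)$ and $Y=\rho(ab)$ anticommute and all coefficients are integers. The eigenvalues are $\alpha\pm\sqrt{\beta^2+\gamma^2}$ and $\alpha'\pm\sqrt{\beta'^2+\gamma'^2}$. These lie in one quadratic field only because $\beta\equiv\beta'$ and $\gamma\equiv\gamma' \pmod 2$, and all four coefficients have absolute value at most $2$. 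Together these force both sums into $\{2\}$, $\{1,5\}$ or $\{0,4,8\}$, according to the parities.

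$G_{18,4}$ is subtler. Its four degree-$2$ irreducibles $\rho_\lambda$, one for each order-$3$ subgroup $\ker\lambda$ of $P=\langle b,c\rangle$, are all rational and pairwise non-conjugate. Put $T=\{p\in P: pa\in S\}$ and $z_\lambda=\sum_{p\in T}\lambda(p)$; the eigenvalues are $\alpha_\lambda\pm|z_\lambda|$. Let $u,v,w\le 3$ count the elements of $T$ in the three cosets of $\ker\lambda$. Then $|z_\lambda|^2=|T|^2-3(uv+vw+wu)$, which lies in $\{0,1,4,7\}$ if $3\nmid |T|$ and in $\{0,3,9\}$ if $3\mid |T|$. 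Both $\sqrt3$ and $\sqrt7$ occur for suitable $S$, so this counting argument, not Galois conjugacy, is what makes $G_{18,4}$ Cayley $2$-integral. The same kind of argument is needed for $G_{24,7}\cong\Bbb Z_2\times Dic_{12}$ and for the irreducibles of $G_{36,7}$ that are trivial on $a^2$.

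So, as written, you have a valid uniformity argument only where you fall back on full enumeration, and you declare that fallback only for $A_4$ and $G_{36,7}$.

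\textbf{Smaller errors in the shortcuts.}
\begin{itemize}
\item $Dic_{20}$ has exponent $20$. It has no element of order $20$, so bound the linear-character values via element orders or via $G/G'\cong\Bbb Z_4$ instead.
\item The degree-$2$ irreducibles of $\Bbb Z_8\rtimes\Bbb Z_2$ and of item (7) are faithful, so they do not factor through $D_8$ or $Q_8$.
\item Appealing to $D_{16}$ or $QD_{16}$ is backwards. Neither is Cayley $2$-integral (Table \ref{Table23}, rows 23--24), and Lemma \ref{dihedral} does not cover $k=8$.
\item Pushing $S$ to a quotient produces a multiset, to which Lemma \ref{dihedral} does not apply.
\item For $\Bbb Z_8\rtimes\Bbb Z_2$ the actual shape is $\rho(S)=\alpha I+\sqrt2(\beta P_1+\gamma P_2)+\delta P_3$ with pairwise anticommuting $P_i$. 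The eigenvalues are $\alpha\pm\sqrt{2\beta^2+2\gamma^2+\delta^2}$, so $\sqrt2$, $\sqrt3$ and $\sqrt5$ all occur; this is not of the form $\alpha I+\sqrt d\,M$ with $M$ integral. Here Galois conjugacy does give uniformity, because the two degree-$2$ irreducibles are conjugate under $\zeta_8\mapsto\zeta_8^3$.
\end{itemize}
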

\begin{proof}
	By a similar argument in Lemmas \ref{dihedral}, \ref{q8z3} or \ref{32yes}, one can see that the above groups are Cayley $2$-integral.
\end{proof}

Combining Theorem \ref{azhv} and Lemmas \ref{dihedral}-\ref{remains}, we get the following result:
\begin{core}\label{coremain}
All of the groups $(5)$-$(22)$ given in Theorem \ref{main} are Cayley $2$-integral.
\end{core}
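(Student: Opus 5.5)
The corollary is a bookkeeping statement: I will match each of the groups $(5)$--$(22)$ of Theorem \ref{main} with an earlier result that already establishes its Cayley $2$-integrality. Every Cayley integral group has algebraic degree $1\le 2$ for all its Cayley graphs, so it is Cayley $2$-integral. Hence Theorem \ref{azhv} covers $Q_8\times\Bbb Z_2^n$ as well as $S_3\cong D_6$ and $Dic_{12}$. The case $n=0$ of item $(4)$ is not in the range $(5)$--$(22)$ anyway.

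I then go through the list item by item.
\begin{itemize}
\item Item $(5)$: $D_{2n}$ with $n=3,4,5,6$ is Lemma \ref{dihedral}.
\item Item $(6)$: $Dic_{12}$ is covered by Theorem \ref{azhv}, and $Dic_{20}$ is Lemma \ref{remains}$(9)$.
\item Items $(7)$--$(12)$: these are, word for word, Lemma \ref{remains}$(2)$--$(7)$.
\item Items $(13)$--$(16)$: the groups $G_{32,50},G_{32,35},G_{32,26},G_{32,23}$ are Lemma \ref{32yes}$(1)$--$(4)$.
\item Item $(17)$: $A_4$ is Lemma \ref{remains}$(1)$.
\item Item $(18)$: $G_{18,4}$ is Lemma \ref{remains}$(8)$.
\item Item $(19)$: $G_{24,4}$ is Lemma \ref{remains}$(10)$.
\item Item $(20)$: $G_{24,11}\cong Q_8\times\Bbb Z_3$ is Lemma \ref{q8z3}.
\item Item $(21)$: $G_{24,7}$ is Lemma \ref{remains}$(11)$.
\item Item $(22)$: $G_{36,7}$ is Lemma \ref{remains}$(12)$.
\end{itemize}

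The only real care needed is to check that the presentations in Theorem \ref{main} coincide with, or define groups isomorphic to, those in the lemmas. In particular, the presentation of $G_{24,11}$ in item $(20)$ must be recognized as $Q_8\times\Bbb Z_3$. In that presentation $a,b$ generate $Q_8$ and $c$ is central of order $3$, so this is immediate.

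There is no genuine obstacle here: the substantive work is in the cited lemmas, some of which, such as Lemma \ref{remains}, rest on finite case checks. The corollary simply collects them.
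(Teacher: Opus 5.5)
Your proposal is correct and is exactly the paper's argument: the paper obtains this corollary by combining Theorem~\ref{azhv} (for $S_3$ and $Dic_{12}$) with Lemmas~\ref{dihedral}, \ref{q8z3}, \ref{32yes} and \ref{remains}, and your item-by-item matching of $(5)$--$(22)$ to these results is accurate. Your check that the presentation in item $(20)$ defines $Q_8\times\Bbb Z_3$ is a detail the paper leaves implicit.
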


\subsection{Some small non-abelian groups which are not Cayley $2$-integral}
In Table \ref{Table23} we give some small non-abelian groups which are not Cayley $2$-integral.

{\footnotesize
	\begin{table}
		\centering
		\begin{tabular}{|c|c|c|c|}
			%	\toprule
			\hline
			$r$& a presentation of $G$ &elements of S & $\deg(\Gamma)$ \\ \hline
			$2$& $\langle a,b\mid b^{4}=a^{4}=[b^2,a]=[b^3,a^2]=(ba^{3})^4=1\rangle$&  $ a^{\pm 1}, (ba)^{\pm 1}, (a^2ba)^{\pm 1}, b^{\pm 1}$ &$4$\\
			%	$2$&  $b^{4}, a^{4}$& $ b^{2}a^{3}b^{2}a, aba^{2}b^{3}a, (ba^{3})^4$  & $a, a^{3}, b, ba, a^2ba, b^{3}, a^{3}b^{3}a^{2}, a^{3}b^{3}$ &$4$\\
			$4$&$ \langle a,b\mid b^4=a^8=[b^2,a]=a^7(a^5)^b=a(a^3)^{b^3}=1\rangle$&  $ a^{\pm 1}, (a^5b)^{\pm 1}, (a^7b)^{\pm 1}, b^{\pm 1}$ &$4$ \\
			%	$4$&$ b^4, a^8$& $ b^{2}a^{7}b^{2}a, a^{7}b^{3}a^{5}b, aba^3b^{3} $  &$ a, a^{7}, b, a^5b, a^7b, b^{3}, b^{3}a, b^{3}a^{3} $ &$4$ \\
			$5$&$\langle a,b\mid b^2=a^8=[a^6,b]=(ba)^2(ba^{7})^2 =1\rangle$&  $(ba)^{\pm 1}, a^{\pm 1}, (a^2b)^{\pm 1}, a^{\pm 3}$&$4$\\
			%	$5$&$b^2, a^8$&$  aba^{6}ba, (ba)^2(ba^{7})^2 $& $a, ba, a^{7}, a^{7}b, a^2b, ba^{6}, a^3, a^{5}$&4\\
			$6$&$\langle a,b\mid b^2=a^4=(ba^2)^4=(a^3b)^4=[b,a^3]^2=1\rangle$&   $ (a^3ba^2)^{\pm 1}, a^{\pm 1}, (ab)^{\pm 1}, (aba)^{\pm 1}$& $4$\\
			%	$6$&$b^2, a^4$& $ (baba^3)^2, (a^3b)^4, a^2(ba^2)^3b$  &$a, a^3ba^2, a^{3}, ba^3, a^3ba^3, ab, aba, a^{2}ba$& $4$\\
			$7$&$\langle a,b\mid b^2=a^8=(aba)^2=[a,b]^2=1 \rangle $& $a^7ba, b, a^6(ab)^2, a^{\pm 1}, a^6b, (ab)^{\pm 2}$& $6$\\
			%	$7$&$b^2, a^8$& $  (aba)^2, (a^7bab)^2 $ & $a^7ba, b, a^6(ab)^2, a, a^7, a^6b, (ab)^2, (ba^7)^2$& $6$\\
			$8$& $\langle a,b\mid b^4=a^8=a^4b^2=b^{a^{6}}b=(ab)^3ab^{3}=1\rangle$&  $ (a^6b)^{\pm 1}, b^{\pm 1}, a^{\pm 2}, a^{\pm 1}$&$4$\\
			%	$8$& $b^4, a^8$&  $a^4b^2, a^{6}b^{3}a^{6}b, (ab)^3ab^{3}$ &$a, b, a^2, b^{3}a^{2}, a^6b, b^{3}, a^{6}, a^{7}$&$4$\\
			$9$&$ \langle a,b\mid b^2=a^4=[b,a^2]= (ba^{3})^8=1\rangle$& $ b, (ba)^{\pm 1}, a^{\pm 1}, a^2, a^2b, aba $&$8$\\
			%	$9$&$ b^2, a^4$& $ a^{3}ba^2ba^{3}, (ba^{3})^8$ &$ b, a, ba, a^{3}, a^2, a^2b, a^{3}b, aba $&$8$\\
			$10$&$\langle a,b\mid b^4=a^4=[b^{2},a^{3}]=[b,a^2]=ab(ab^3)^3=1\rangle$& $b^{\pm 1}, (ab)^{\pm 1}, (aba)^{\pm 1}, (ab)^{\pm 2}$&$4$\\
			%	$10$&$b^4, a^4$&$  b^2ab^{2}a^{3}, a^{3}b^{3}a^2ba^{3}, a^{3}b^{3}a^{3}b(ab^3)^2 $ & $b, b^3, ab, b^3a^3, aba, a^3b^3a^3, (ab)^2, (b^3a^3)^2$&$4$\\
			$11$&$\langle a,b\mid b^2=a^8= (ab)^4=(a^2)^ba^6=1\rangle$ &   $a^{\pm 1}, (ba)^{\pm 1}, (a^2b)^{\pm 1}, a^{\pm 3} $ & $4$\\
			%	$11$&$b^2, a^8$ & $ a^7ba^2ba^7, (ab)^4 $ & $a, ba, a^7, a^2b, ba^6, a^7b, a^3, a^5 $ & $4$\\
			$12$&$\langle a,b\mid b^4=a^8=a^{7}bab=1\rangle$& $ a^{\pm 2}, a^{\pm 1}, a^{\pm 3}, (ba)^{\pm 1}$&$4$\\
			%	$12$&$b^4,a^8$& $ a^{7}bab $ & $a^2, a^{6}, a, a^{7}, a^3, a^{5}, ba, a^{7}b^{3} $&$4$\\
			$13$&$\langle a,b\mid a^4=b^8=[a^{2},b]=b^{7}(b^3)^{a^{3}}= b(b^{5})^a=1\rangle$& $ a^{\pm 1}, b^{\pm 1}, (ab)^{\pm 1}, (a^3b)^{\pm 1}$&$8$\\
			%	$13$&$a^4,b^8$&$ a^{2}b^{7}a^{2}b, b^{7}ab^3a^{3}, ba^{3}b^{5}a$ &$ a, a^{3}, b, b^{7}, ab, a^3b, b^{7}a, b^{7}a^{3}$&$8$\\
			$14$&$\langle a.b\mid  a^4=b^8= a^{3}bab=1 \rangle$& $ a^{\pm 1}, b^{\pm 1}, (ab)^{\pm 1}, (b^3a)^{\pm 1}$&$4$\\
			%	$14$&$ a^4, b^8 $&$ a^{3}bab$ &$ a, a^{3}, b, b^{7}, ab, b^{7}a^{3}, b^3a, a^{3}b^{5}$&$4$\\
			$15$&$\langle a,b\mid a^8=b^8=b^ab^5=(ab^{7})^2a^2=1\rangle$& $ (a^2ba)^{\pm 1},  a^{\pm 1}, a^{\pm 3}, a^{\pm 2}$&$4$\\
			%	$15$&$a^8,b^8$&$ (ab^{7})^2a^2, b^{7}a^{7}bab^{6} $ &$a, a^2, a^2ba, a^{7}b^{7}a^{6}, a^3, a^{7}, a^{5}, a^{6}$&$4$\\
			$22$&$\langle a,b,c\mid  b^2=c^2=a^4=(cb)^2=(a^{2}b)^2=[c,a]=(ba^{3})^2(ba)^2=1\rangle$& $ b, a^{\pm 1}, a^2b, abac$&$4$\\		
			%	$22$&$ b^2, c^2, a^4$&$ ca^{3}ca, (cb)^2, (a^{2}b)^2, (ba^{3})^2(ba)^2$&$ b, a, a^3, a^2b, abac$&$4$\\
			$24$&$\langle a,b,c\mid b^2=c^4=a^4=[c,a]=[c,b]=(a^{2}b)^2=c^2[b,a]=1\rangle$& $a^{\pm 1},b,(ab)^{\pm 1},(ac)^{\pm 1},a^2b,(abc^3)^{\pm 1}$&$4$\\					
			%	$24$&$b^2, c^4, a^4$&$ c^{3}a^{3}ca, c^{3}bcb, (a^{2}b)^2, c^2ba^{3}ba$ &$a,b,ab,ac,a^3,a^2b,abc^3,a^3bc^2,a^3c^3,a^3bc^3$&$4$\\
			$25$&$\langle a,b,c\mid b^2=a^4=c^4=c^2a^2=(ca^{3})^2=(ba^{3})^4=[c,b]=[b,a^{3}]^2=1\rangle$& $ b, (bc)^{\pm 1}, ac$&$4$\\				
			%	$25$&$b^2, a^4, c^4$&$ c^2a^2, (ca^{3})^2, c^{3}bcb, (ba^{3})^4, (baba^{3})^2$ &$ b, bc, a^2bc, ac$&$4$\\
			$27$&$\langle a,b,c\mid a^2=b^2=c^2=[c,b]=(bc^a)^2=(ba)^4=(ca)^4=(acb)^4=1\rangle$& $ b, c, a$&$4$\\		
			%	$27$&$a^2, b^2, c^2$&$ (cb)^2, (baca)^2, (ba)^4, (ca)^4, c(acb)^3ab$ &$ b, c, a$&$4$\\
			$28$&$\langle a,b,c\mid a^2=c^2=b^4=[c,b]=(ab)^2=(ca)^4=1\rangle$&$ c, a, ab $&$8$\\		
			%	$28$&$ a^2, c^2, b^4$&$ cb^{3}cb, (ab)^2, (ca)^4 $ &$ c, a, ab $&$8$\\
			$29$&$\langle a,b,c\mid  c^2=a^4=b^4=[c,b]=(ca^{3})^4=ba^2b=a^{3}(a^{3})^b=[c,a^{3}]^2=1\rangle$& $c, a^{\pm 1}, (bc)^{\pm 1}$&$4$\\		
			%	$29$&$ c^2, a^4, b^4$&$a^{3}b^{3}a^{3}b, ba^2b, cb^{3}cb, (ca^{3})^4, (caca^{3})^2$ &$c, a, a^3, bc, a^2bc $&$4$\\
			$30$&$\langle a,b,c\mid  a^2=b^2=c^4=c^2(ab)^2=(ac)^4=[c,b]=1\rangle$&$ a, b, (bac)^{\pm 1}$&$4$\\		
			%	$30$&$ a^2, b^2, c^4$&$ c^3bcb, c^2(ab)^2, (ac)^4$ &$ a, b, bac, bca$&$4$\\
			$31$&$\langle a,b,c\mid a^2=b^4=c^4=(c^{2}a)^2=(b^{2}a)^2=[c,b]=[c^{3},a]b^{2}=(cab^3)^2=1\rangle$& $a, b^{\pm 1}, bab^2c$&$4$\\
			%	$31$&$a^2, b^4, c^4$&$ c^{3}b^{3}cb, (c^{2}a)^2, (b^{2}a)^2, cac^{3}ab^{2}, cacb^{3}ab^{3}$  &$a, b, b^3, bab^2c$&$4$\\
			$32$&$\langle a,b,c\mid a^4=b^4=c^4=c^2a^2=[c,b]=(a^{3})^ba^{3}=(ac)^2b^2=1\rangle$& $ a^{\pm 1}, (a^3b)^{\pm 1}, b^{\pm 1}, c^{\pm 1}, (ac)^{\pm 1} $&$4$\\
			%	$32$&$a^4,b^4,c^4$&$ c^2a^2, b^{3}a^{3}ba^{3}, c^{3}b^{3}cb, ab^{3}c^{3}a^{3}c^{3}b^{3} $ &$ a, a^{3}, b, a^3b, b^{3}a, b^{3}, c, c^{3}, ac, c^{3}a^{3} $&$4$\\
			$33$& $\langle a,b,c\mid a^2=c^4=b^4=[b,c^{3}]=b^{3}(c^2b)^a=c(b^{3}cb^{3})^a=1\rangle$&$a, (ac)^{\pm 1}, ac^2, c^{\pm 1}, b^{\pm 1}, (ab^3c^3)^{\pm 1}$	&$4$\\
			$34$& $\langle a,b,c\mid a^2=b^4=c^4=(ab)^2=(ac)^2=[c,b]=1\rangle$& $a, ab, ba, abaca$&$4$\\
			$37$&$\langle a,b,c\mid a^8=b^2=c^2=[c,a]=(cb)^2=(a^7b)^2a^6=1\rangle$ & $b, a^5b, a^7b, bc $&$4$\\
			$38$&$\langle a,b,c\mid a^8=b^2=c^2=[b,a]=[c,a]=(cb)^4=(cb)^2a^4=1\rangle$ &$ c, a^{\pm 1}, (ab)^{\pm 1}$&$4$\\
			$41$&$\langle a,b,c\mid a^4=b^4=c^2=b^2a^2=[c,a]=[c,b]=(a^{3}b^{3})^3ab^{3}=1\rangle$ &$  a^{\pm 1}, b^{\pm 1}, (a^3b)^{\pm 1}, (bc)^{\pm 1}$&$4$\\
			$42$&$\langle a,b,c\mid a^2=b^2=c^4=[c,a]=[c,b]=c^2(ab)^4=1\rangle$&$a, b, (ab)^2c $&$8$\\
			$43$&$\langle a,b,c\mid a^2=b^2=c^2=(cb)^2=(b^ac)^2=(ba)^3ba^c=1\rangle$ &$ b, bc, a $&$4$\\
			$44$&$\langle a,b,c\mid a^2=c^2=b^4=[c,b]=b^2(ca)^2=ab(ab^{3})^3=1\rangle$ &$c, a, b^{\pm 1}$&$4$\\
			$46$&$\langle a,b,c,d\mid a^2=b^2=d^2=c^2=(ca)^2=(da)^2=(cb)^2=(db)^2=(dc)^2=(ba)^4=1\rangle$ &$ babc, babd, abacd, babcd$&$4$\\
			$48$&$\langle a,b,c,d\mid  a^2=b^2=d^2=c^4=[d,c]=(da)^2=[c,a]=(db)^2=[c,b]=(cba)^2=1\rangle$ &$ a, b, abc, babd$&$4$\\
			$49$&$\langle a,b,c,d\mid a^2=b^2=d^2=c^2=(ca)^2=(db)^2=(dc)^2=(bca)^2=(da)^4=a^ba^d=1\rangle$&$c, d, ac, bad $&$4$\\	\hline			
			%												\bottomrule
		\end{tabular}
		\vspace{0.1cm}
		\caption{Some non-abelian groups $G=G_{32,r}$ of order $32$ which $\Gamma=\Cay(G,S)$ are not $2$-integral}\label{table:2}
		
	\end{table}
}

	{\footnotesize
	\begin{table}
		\centering
		\begin{tabular}{|c|c|c|c|}
			%	\toprule
			\hline
			$No.$&$G$&elements of $S$& $\deg(\Gamma)$ \\ \hline
			$1$& $G_{24,12}=\langle (1234),(12)\rangle\cong S_{4}$&$(34), (23), (1243)^{\pm 1}$&$4$\\
			$2$&$G_{24,13}=\langle a,b,c\mid a^3=b^2=c^2=[a,c]=[b,c]=1, aba=ba^2b\rangle\cong A_4\times\Bbb Z_2$&$ (ac)^{\pm 1}, b^ac, b$&$12$\\
			$3$&$G_{36,11}=\langle a,b,c\mid a^3=b^2=c^3=[a,c]=[b,c]=1, aba=ba^2b\rangle\cong A_4\times\Bbb Z_3$&$a^{\pm 1}, b, a^2bc, bac$&$4$\\
			$4$& $G_{48,50}=\langle b,c,d,e\rangle\rtimes\langle a\rangle\cong\Bbb Z_2^4\rtimes\Bbb Z_3; b^a=c, c^a=bc,d^a=e,e^a=de$&$b,a^{\pm 1},(ae)^{\pm 1}$&$4$\\
			$5$& $G_{48,3}=\langle a,b,c\mid a^3=b^4=c^4=[b,c]=1, b^a=c, c^a=(cb)^{-1}\rangle\cong \Bbb Z_4^2\rtimes\Bbb Z_3$&$ c^2,a^{\pm 1}, b^{\pm 1}$&$12$\\
			$6$& $G_{24,1}=\langle a,b\mid a^8=b^3=1, b^a=b^2\rangle\cong\Bbb Z_3\rtimes\Bbb Z_8$&$(a^2b)^{\pm 1},a^{\pm 1}$&$4$\\
			%	$7$&$G_{50,3}=\langle a,b,c\mid a^2=b^5=c^5=(ac)^2=[b,a]=[b,c]=1\rangle\cong\Bbb Z_5\times D_{10}$&$ab,ab^2c,ab^4,ab^3c$&$8$\\
			$7$&$G_{50,4}=\langle a,b,c\mid a^2=b^5=c^5=[b,c]=1, b^a=b^{-1},c^a=c^{-1}\rangle\cong\Bbb Z_5^2\rtimes\Bbb Z_2$&$a,ab,ac$&$8$\\
			$8$& $G_{80,49}=\langle a,b\mid a^5=b^2=(bb^a)^2=(ba^{-1})^5=(ba^{-2}ba^2)^2=1 \rangle=\Bbb Z_2^4\rtimes\Bbb Z_5$&$ a^{\pm 1}, (a^{-1}b)^{\pm 1} $&$4$\\
			$9$&$G_{18,3}=\langle a,b,c\mid a^2=b^3=c^3=(ab)^2=1,[a,c]=[b,c]=1\rangle\cong S_3\times\Bbb Z_3$&$a, (cb)^{\pm 1}, (acb)^{\pm 1}$ &$4$\\ 
			%$10$&$G_{24,1}=\langle a,b\mid a^3=b^8=1,a^b=a^2\rangle\cong\Bbb Z_3\rtimes\Bbb Z_8$&$b, b^2a, b^{-1}, b^6a^2$&$4$\\ 
			$10$&$G_{24,5}=\langle a,b,c\mid a^2=b^3=(ab)^2=c^4=1, [a,c]=[b,c]=1\rangle\cong S_3\times\Bbb Z_4$& $a, (acb)^{\pm 1} $&4\\ 
			$11$&$G_{24,8}=\langle a,b,c\mid  a^2=b^2=c^3=(ac)^2=b^cb=(ba)^4=1\rangle\cong (\Bbb Z_6\times\Bbb Z_2)\rtimes\Bbb Z_2$&$ a, abc, (ab)^3c $&$4$\\ 
			$12$&$G_{24,10}=\langle a,b,c\mid a^2=b^2=(ab)^4=c^3=1,[c,a]=[c,b]=1\rangle\cong  D_8\times \Bbb Z_3 $ & $a, (bc)^{\pm 1} $&$4$\\ 
			$13$&$G_{24,14}=\langle a,b,c,d\mid a^2=b^2=c^2=d^3=(cd)^2=1\rangle\cong \Bbb Z_2^2\times S_3; a,b\in Z(G)$&$ c, ca, cbd$&$4$\\
			%$\langle a,b,c,d \mid a^2=b^2=(ab)^2=c^2=d^3=(cd)^2=[a,c]=[a,d]=[b,c]=[b,d]=1\rangle\cong \Bbb Z_2^2\times S_3$&$ c, ca, cbd$&$4$\\ 
			$14$&$G_{48,11}=\langle a,b,c\mid a^4=b^3=c^4=1, b^c=b^2, [a,b]=[a,c]=1\rangle$&$ c^{\pm 1}, (ab)^{\pm 1}$&$4$\\ %[ 48, 11 ]
			$15$&$G_{48,12}=\langle a,b,c\mid a^4=b^4=c^3=1, [b,c]=1,c^a=c^2,a^b=a^3\rangle$&$a^{\pm 1}, (bc)^{\pm 1}$&$4$\\ %[48,12]
			$16$&$G_{48,13}=\langle a,b\mid a^{12}=b^4=1, a^b=a^{-1}\rangle\cong\Bbb Z_{12}\rtimes\Bbb Z_4$&$  b^{\pm 1}, (ba)^{\pm 1} $&$4$\\ %[48,13]
			$17$&$G_{48,19}=\langle a,b,c\mid a^6=b^2=c^4=[a,b]=1, a^c=a^5b,b^c=b\rangle$&$c^{\pm 1},  a^{\pm 1}$&$4$\\ %[48,19]
			$18$&$G_{48,21}=\langle a,b,c\mid  b^2=c^3=a^4=(ba)^4=[c,a]=[c,b]=[a^2,b]=1\rangle$&$ a^{\pm 1}, (bc)^{\pm 1}$&$4$\\ %[48,21]
			$19$&$G_{48,22}=\langle a,b,c\mid a^4=b^4=c^3=1, b^a=b^{-1},[c,a]=[c,b]=1\rangle$&$ a^{\pm 1}, (abc)^{\pm 1} $&$8$\\ %[48,22]
			$20$&$G_{48,34}=\langle a,b,c,d\mid a^4=b^2=d^3=c^2a^2=cc^{a^{3}}=d^ad=[d,c]=1\rangle; b\in Z(G) $&$   (ac^{3})^{\pm 1}, a^{\pm 1}, (ab)^{\pm 1}, (ad)^{\pm 1} $&$4$\\ %[48,34]
			$21$&$G_{48,42}=\langle a,b,c,d\mid a^4=b^2=c^2=d^3=1, d^a=d^2\rangle; b,c\in Z(G) $&$ (ab)^{\pm 1}, a^{\pm 1}, (acd)^{\pm 1}$&$4$\\ %[48,42]
			$22$&$G_{48,46}=\langle a,b,c\mid a^4=c^6=[c,a]=[c,b]=1, a^2=b^2,a^b=a^{-1}\rangle\cong\Bbb Z_6\times Q_8$&$(ac^4)^{\pm 1}, (bc^4)^{\pm 1}, (ac)^{\pm 1}$&$4$\\%[ 48, 46 ]
			$23$&$D_{16}=\langle a,b\mid a^8=b^2=(ab)^2=1\rangle$&$b,ba$&$4$\\
			$24$&$QD_{16}=\langle a,b\mid a^8=b^2=1, bab=a^3\rangle$&$(ab^{-1})^{\pm 1}, b, a^{\pm 1}$&$4$\\
			$25$&$D_{18}=\langle a,b\mid a^9=b^2=(ab)^2=1\rangle$&$b, a^{\pm 1}$&$3$\\
			$26$&$D_{24}=\langle a,b\mid a^{12}=b^2=(ab)^2=1\rangle$&$b, ba$&$4$\\
			$27$&$G_{60,5}=\langle (12345), (345)\rangle\cong A_5$&$(345)^{\pm 1}, (13)(24)$&$4$\\%5.1
			$28$&$G_{27,3}=\langle a,b\mid a^3=b^3=c^3=1,c=[a,b], [a,c]=[b,c]=1\rangle$&$a^{\pm 1},b^{\pm 1},(ab)^{\pm 1}$&$3$\\%5.2
			$29$&$G_{125,3}=\langle a,b\mid a^5=b^5=c^5=1,c=[a,b], [a,c]=[b,c]=1\rangle$&$a^{\pm 1},b^{\pm 1},(ab)^{\pm 1}$&$64$\\%5.2
		%	$$&$G_{405,15}=\langle b_1,b_2,b_3,b_4\rangle\rtimes\langle a\rangle\cong \Bbb Z_3^4\rtimes\Bbb Z_5;b_1^a=b_2b_3^2b_4^2,~b_2^a=b_1^2b_3,~b_3^a=b_2^2b_4,~b_4^a=b_1b_3^2b_4^2$&$a^{\pm 1},b_1^{\pm 1}$&$$\\%5.3
		%	$$&$G=\langle a,b_1,b_2\rangle$&$a^{\pm 1},b_1^{\pm 1}$&$$\\%5.3
			$30$&$G_{50,4}=\langle a,b,c\mid a^2=b^5=c^5=(ab)^2=(ac)^2=[b,c]=1\rangle\cong\Bbb Z_5^2\rtimes\Bbb Z_2$&$a,ab,ac$&$8$\\%5.7
			$31$&$G_{36,9}=\langle a,b\mid a^4=b^3=(a^{-1}ba^{-1})^2=a^{-1}ba^{-1}(b^{-1}a)^2b^{-1}=1\rangle$&$a^{\pm 1},a^2b$&$12$\\%5.10
			$32$&$G_{36,13}=\langle a,b,c,d\mid a^2=b^2=c^3=d^3=(ac)^2=(ac)^2=(ba)^2=b^cb= b^db=d^{-1}d^c=1\rangle$&$a,ac,abd$&$4$\\%5.10
			$33$&$G_{36,6}=\langle  a,b,c\mid b^3=c^3=a^4=c^ac=[b,a]=[c,b]=1\rangle$&$ a^{\pm 1}, (abc)^{\pm 1}$&$4$\\%5.11
			$34$&$D_{20}=\langle a,b\mid a^{10}=b^2=(ab)^2=1\rangle$&$b,ba$&$4$\\
			$35$&$G_{40,1}=\langle a,b\mid a^8=b^5=1, b^a=b^{-1}\rangle\cong\Bbb Z_5\rtimes\Bbb Z_8$&$a^{\pm 1}, b^{\pm 1}$&$4$\\
			$36$&$G_{40,3}=\langle a,b\mid a^8=b^5=1, b^a=b^{2}\rangle\cong\Bbb Z_5\rtimes\Bbb Z_8$&$a^{\pm 1}, b^{\pm 1}$&$16$\\
			$37$&$G_{40,7}=\langle a,b,c\mid a^4=b^5=c^2=[a,c]=[b,c]=1, b^a=b^{-1}\rangle\cong\Bbb Z_2\times Dic_{20}$&$a^{\pm}, (acb)^{\pm}$&$4$\\
			  \hline

			%$\langle a,b,c,d\mid a^4=b^2=c^3=d^2=[c,a]=[c,b]=[c,d]=[a,d]=1, a^b=ad,d^b=a^4d \rangle\cong\Bbb Z_3\times((\Bbb Z_4\times\Bbb Z_2)\rtimes\Bbb Z_2) $&$\{a, a^{-1}, bc, (bc)^{-1}\}$&$4$\\ %[48,21]
			
			% $\langle a,b,c,d\mid a^4=b^2=d^3=[c,d]=[b,a]=[b,c]=[b,d]=1, a^2=c^2, a^c=a^{-1},d^a=d^2\rangle\cong\Bbb Z_2\times(\Bbb Z_3\rtimes Q_8) $&$\{a, b, ab, a^3, ba^3, abcd, (abcd)^{-1} \}$&$4$\\ %[48,34]
			
			% $14$&$\langle a,b,c,d\mid a^4=b^2=c^2=d^3=[b,c]=[b,a]=[b,d]=[c,a]=[c,d]=1, d^a=d^2\rangle\cong (\Bbb Z_2\times\Bbb Z_2)\times(\Bbb Z_3\rtimes\Bbb Z_4) $&$\{ a, ab, a^3, a^3b, acd, a^3cd\}$&$4$\\ %[48,42]
			%												\bottomrule
		\end{tabular}
		\vspace{0.1cm}
		\caption{Some Cayley graphs $\Gamma=\Cay(G,S)$ which are not $2$-integral.}\label{Table23}
		
	\end{table}
}

\section{Finite non-abelian Cayley $2$-integral groups}

\begin{lem}\label{solv}
	Let $G$ be a finite Cayley $2$-integral group. Then $G$ is solvable.
\end{lem}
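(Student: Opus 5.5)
We argue by contradiction via a minimal counterexample. Let $G$ be a non-solvable Cayley $2$-integral group of smallest order. By Lemma \ref{sbg-qoutient}, every proper subgroup and every proper quotient of $G$ is Cayley $2$-integral, hence solvable by minimality. So $G$ is a minimal non-solvable group. Its solvable radical $R$ gives a quotient $G/R$ that is again non-solvable, so $R=1$. Every proper subgroup being solvable then forces $G$ to be a minimal simple group: indeed, if $N$ were a proper nontrivial normal subgroup, both $N$ and $G/N$ would be solvable, and so would $G$.

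By Corollary \ref{2,3,5}, $G$ is a $\{2,3,5\}$-group whose elements have orders in $\{1,2,3,4,5,6,8,10,12\}$. We will invoke Thompson's classification of minimal simple groups: $G$ is one of $\mathrm{PSL}(2,2^p)$, $\mathrm{PSL}(2,3^p)$ with $p$ an odd prime, $\mathrm{PSL}(2,p)$ with $p>3$ prime and $p^2\equiv -1 \pmod 5$, $\mathrm{Sz}(2^p)$, or $\mathrm{PSL}(3,3)$. (If we wish to avoid Thompson, we can instead use the classification of simple $\{2,3,5\}$-groups, namely $A_5$, $A_6$ and $\mathrm{PSU}(4,2)$.)

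Each case other than $A_5$ fails the order condition:
\begin{itemize}
\item $\mathrm{PSL}(2,2^p)$ has order divisible by $2^p+1$ and $2^p-1$. For $p\ge3$ this produces a prime outside $\{2,3,5\}$; for instance $p=3$ gives $7$ and $p=5$ gives $31$. So only $p=2$, that is $A_5$, survives.
\item $\mathrm{PSL}(2,3^p)$ with $p$ odd involves the primes dividing $(3^p\pm1)/2$; already $p=3$ gives $13$ and $7$.
\item $\mathrm{PSL}(2,p)$ with $p\ge7$ has order divisible by $p$, which lies outside $\{2,3,5\}$. The remaining case is $\mathrm{PSL}(2,5)\cong A_5$.
\item $\mathrm{Sz}(2^p)$ has order divisible by $2^{2p}+1$. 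For $p=3$ this is $65=5\cdot13$, and it contains $13$.
\item $\mathrm{PSL}(3,3)$ has elements of order $13$.
\end{itemize}
Each of these therefore contradicts Corollary \ref{2,3,5}. (Under the alternative route, $A_6$ contains $A_5$, and $\mathrm{PSU}(4,2)$ contains an element of order $9$ or a subgroup $A_5$; either way it is excluded by Corollary \ref{2,3,5} or by the $A_5$ case below.)

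Hence $G\cong A_5$. Entry $27$ of Table \ref{Table23} shows that $\Cay(A_5,\{(345)^{\pm1},(13)(24)\})$ has algebraic degree $4>2$, a contradiction. So no counterexample exists, and $G$ is solvable.

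The main obstacle is the appeal to the deep classification result, whether Thompson's list or the list of simple $\{2,3,5\}$-groups, and checking that every entry other than $A_5$ contains an element of forbidden order. Once the classification is invoked the arithmetic is routine. The only genuinely computational input is the single explicit Cayley graph on $A_5$ in Table \ref{Table23}, which can be verified directly from its character table.
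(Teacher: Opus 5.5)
Your proof is correct, but it follows a different route from the paper's. The paper never reduces to a minimal simple group. It inducts on $|G|$ using only closure under quotients, so it suffices to show that a minimal normal subgroup $N\cong T^k$ is abelian. If it were not, $T$ would be a non-abelian simple $\{2,3,5\}$-group. Herzog's list of simple groups with exactly three prime divisors then leaves only $A_5$, $A_6$ and $U_4(2)$, each of which contains $A_5$, and this is ruled out by Table~\ref{Table23}(27) together with subgroup closure.

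You instead use closure under both subgroups and quotients to make a minimal counterexample a minimal simple group, and then invoke Thompson's list. The trade-off is this. The paper's finite list makes the final check a single observation, at the cost of citing Herzog. Your route uses the more standard list, but it then has infinite families to eliminate, and for those you only give numerical instances ($p=3,5$).

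To make that step a proof for all $p$, argue uniformly with element orders:
\begin{itemize}
\item $\mathrm{PSL}(2,q)$ has cyclic subgroups of orders $(q\pm1)/\gcd(2,q-1)$, and $\mathrm{Sz}(q)$ has one of order $q-1$.
\item Hence $q=2^p$ with $p\ge 3$ forces an odd element order $\ge 7$.
\item Likewise $q=3^p$ with $p\ge 3$ forces an element order $\ge 13$.
\end{itemize}
Both are excluded by Corollary~\ref{2,3,5}.

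A minor slip: $p=5$ does not satisfy $p^2\equiv -1 \pmod 5$, so $A_5$ enters Thompson's list only as $\mathrm{PSL}(2,4)$. You already handle that case, so nothing breaks. Your parenthetical alternative through the simple $\{2,3,5\}$-groups is essentially the paper's argument.
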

\begin{proof}
By Corollary \ref{2,3,5}, \( G \) is a \( \{2,3,5\} \)-group. According to Burnside's \( pq \)-Theorem, we can assume that the order of \( G \) has three distinct prime divisors.  Since Cayley \( 2 \)-integrality is preserved through quotients, by induction on $|G|$, it suffices to demonstrate that \( G \) possesses a non-trivial solvable normal subgroup. 
Let \( N \) denote a minimal normal subgroup of \( G \). As a result, \( N \) can be expressed as a direct product of isomorphic simple groups. We can write \( N \cong T^k \), where \( T \) is a finite simple group and \( k \geq 1 \). If \( N \) is non-abelian, it follows that \( T \) must be one of the groups: \( A_5, A_6, L_2(7), L_2(8), L_2(17), L_3(3), U_3(3) \), or \( U_4(2) \) \cite[p.383]{Herzog}. Given that \( T \) is a \( \{2,3,5\} \)-group, we conclude that \( T \) can only be \( A_5, A_6, \) or \( U_4(2) \). 
It is important to note that \( A_5 \) is present as a subgroup in all these simple groups. By Table \ref{Table23}(27), $A_5$ is not a Cayley $2$-integral group. Since Cayley \( 2 \)-integrality is invariant under the formation of subgroups, we reach our conclusion.
\end{proof}

\begin{lem}\label{elab}
A finite $p$-group, $p=3,5$,  is Cayley $2$-integral if and only if it is elementary abelian.
\end{lem}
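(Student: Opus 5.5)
The "if" direction is immediate from the abelian classification. An elementary abelian $3$-group is $\Bbb Z_3^s$, which is of form (3) in Proposition \ref{baseab} with $r=t=0$. An elementary abelian $5$-group is $\Bbb Z_5^s$, which is of form (1) with $r=0$. (Alternatively, Lemma \ref{exp} gives both directly, since the exponent is $3$ or $5$.)

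For the "only if" direction, let $G$ be a Cayley $2$-integral $p$-group with $p\in\{3,5\}$. By Corollary \ref{2,3,5}, every non-identity element has order in $\{2,3,4,5,6,8,10,12\}$. Since orders are powers of $p$, every non-identity element has order exactly $p$, so $\exp(G)=p$. It therefore suffices to show that $G$ is abelian, and I argue by contradiction.

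Suppose $G$ is a minimal counterexample: a non-abelian Cayley $2$-integral $p$-group of exponent $p$. By Lemma \ref{sbg-qoutient}, every proper subgroup and every proper quotient is Cayley $2$-integral and of exponent dividing $p$, so by minimality all of them are abelian. A non-abelian $p$-group all of whose proper subgroups and quotients are abelian must be $2$-generated. Its derived subgroup $G'$ has order $p$, is central, and is the unique minimal normal subgroup. Together with $\exp(G)=p$, this forces $G$ to be the extraspecial group $p^{1+2}_+$ of order $p^3$ and exponent $p$, the Heisenberg group $\langle a,b\mid a^p=b^p=c^p=1,\ c=[a,b],\ [a,c]=[b,c]=1\rangle$.

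Concretely, the reduction goes as follows. Take $a,b\in G$ that do not commute. Then $H=\langle a,b\rangle$ is a non-abelian subgroup, so minimality gives $G=H$. Choose a central subgroup $Z$ of order $p$ inside $G'\cap Z(G)$. The quotient $G/Z$ is abelian of exponent $p$ and is generated by two elements, so $|G/Z|\le p^2$ and hence $|G|\le p^3$. A non-abelian group of exponent $p$ and order at most $p^3$ is exactly the Heisenberg group $G_{27,3}$ when $p=3$, or $G_{125,3}$ when $p=5$.

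These two groups are ruled out by Table \ref{Table23}. Entries (28) and (29) show that $\Cay(G,\{a^{\pm1},b^{\pm1},(ab)^{\pm1}\})$ has algebraic degree $3$ and $64$, respectively. Both exceed $2$, a contradiction.

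The step needing the most care is the reduction to order $p^3$. It is routine but must be stated cleanly. The key facts are that a $p$-group of exponent $p$ generated by two elements, modulo a central subgroup of order $p$ contained in $G'$, yields an abelian quotient of order at most $p^2$, and that non-commuting elements generate a subgroup that is already non-abelian. The spectral content is entirely absorbed by the table entries, which rest on computation.
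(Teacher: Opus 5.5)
Your proof is correct and follows essentially the same route as the paper. Both reduce to exponent $p$ via Corollary \ref{2,3,5}, take a minimal non-abelian counterexample, identify it as the Heisenberg group of order $p^3$, and rule it out with rows (28) and (29) of Table \ref{Table23}. The only difference is in how the order bound $|G|\le p^3$ is obtained: you get it directly, using closure under quotients by a central subgroup of order $p$ inside $G'$, whereas the paper cites the structure of minimal non-abelian $p$-groups and an exercise in Berkovich.
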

\begin{proof}
One direction is established by Proposition \ref{baseab}. Conversely, let \( G \) represents a Cayley \( 2 \)-integral group. According to Lemma \ref{2,3,5}, \( G \) has exponent \( p \). It suffices to demonstrate that \( G \) is abelian. Assume, for the sake of contradiction, that \( G \) is a non-abelian \( p \)-group of minimal possible order that is also a Cayley \( 2 \)-integral group. Under this assumption, every proper subgroup of \( G \) must be abelian since every subgroup of a Cayley \( 2 \)-integral group is Cayley \( 2 \)-integral as well. Consequently, \( G \) is a minimal non-abelian \( p \)-group, which implies that it has nilpotent class \( 2 \) and is \( 2 \)-generated. Given that \( G \) has exponent \( p \), its order is \( p^3 \), and \( G/Z(G) \cong \mathbb{Z}_p \times \mathbb{Z}_p \) while \( Z(G) \) is cyclic. Hence, by \cite[\textsection 1, Exercise 8.a]{Berkovich1}, 
$G=\langle a,b\mid a^p=b^p=c^p=1,c=[a,b], [a,c]=[b,c]=1\rangle$. Then we get a contradiction by rows $(28)$ and $(29)$ of Table \ref{Table23}.
\end{proof}

%\begin{core}
%	Let $G$ be a $\{3,5\}$-group which is a Cayley $2$-integral, then one of the following holds:
%	\begin{itemize}
%		\item [(a)] $G$ is elementary abelian
%		\item [(b)] $G$ is a group with a normal Sylow $p$-subgroup and a Sylow $q$-subgroup of order $q$ such that $p\neq q$ and $p,q\in\{3,5\}$.
%	\end{itemize}
%\end{core}
\begin{lem}\label{3,5}
	A finite $\{3,5\}$-group $G$ is Cayley $2$-integral if and only if it is an elementary abelian $p$-group for $p=3$ or $5$.
\end{lem}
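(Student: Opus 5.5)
The \emph{if} direction is immediate: an elementary abelian $3$- or $5$-group is of the form $\Bbb Z_3^s$ or $\Bbb Z_5^s$, which is Cayley $2$-integral by Proposition \ref{baseab}. For the converse, let $G$ be a Cayley $2$-integral $\{3,5\}$-group. By Lemma \ref{elab}, it suffices to show that $G$ is a $p$-group. Suppose instead that both $3$ and $5$ divide $|G|$. By Burnside's $pq$-theorem (or Lemma \ref{solv}), $G$ is solvable. We argue by minimal counterexample, using that subgroups and quotients inherit Cayley $2$-integrality (Lemma \ref{sbg-qoutient}).

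Take $G$ of minimal order with $|G|$ divisible by $15$. By Corollary \ref{2,3,5}, no element of $G$ has order $15$, because $15$ is not an allowed element order. Every proper subgroup whose order is divisible by $15$ contradicts minimality. Hence every proper subgroup is a $3$-group or a $5$-group, and so is elementary abelian by Lemma \ref{elab}. Since $G$ is solvable, it has a Hall system, and a subgroup $PQ$ with $P\in{\rm Syl}_p(G)$ and $Q\in{\rm Syl}_q(G)$ has order $|G|$; so this gives no reduction by itself. Instead, take a maximal normal subgroup $M$ of $G$. Then $M$ has prime index, and by minimality $M$ is a $p$-group, hence elementary abelian. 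The quotient $G/M\cong\Bbb Z_q$ with $q\ne p$ in $\{3,5\}$. So $G=M\rtimes\langle x\rangle$ with $|x|=q$ and $M\cong\Bbb Z_p^n$. Because there are no elements of order $15$, $x$ acts on $M$ fixed-point-freely. By minimality, $M$ is moreover an irreducible $\Bbb F_p\langle x\rangle$-module, since otherwise a proper invariant subgroup $N$ with $N\langle x\rangle<G$ would already be a smaller counterexample.

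The two cases are as follows. If $p=5$ and $q=3$, the irreducible fixed-point-free modules for $\Bbb Z_3$ over $\Bbb F_5$ have dimension $2$, since $3\mid 5^2-1$. This gives $G\cong\Bbb Z_5^2\rtimes\Bbb Z_3$ of order $75$. If $p=3$ and $q=5$, the order of $3$ mod $5$ is $4$, so $G\cong\Bbb Z_3^4\rtimes\Bbb Z_5$ of order $405$. In each case I will exhibit an explicit Cayley graph of degree greater than $2$. The natural choice is $S=\{x^{\pm1},m^{\pm1}\}$ with $m\in M$, and the eigenvalues come from the induced representations $\mathrm{Ind}_M^G\chi$ of degree $q$. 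For a nontrivial $\chi$, the matrix $\rho(S)$ is a $q\times q$ matrix: a diagonal matrix with entries $\chi(m^{x^i})+\overline{\chi(m^{x^i})}$ (real numbers in $\Bbb Q(\zeta_p)^+$) plus the circulant part of the $q$-cycle $x+x^{-1}$.

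The main obstacle is showing that the splitting field of this matrix is not quadratic. I would first try $p=5,q=3$. Then $\rho(S)$ is a $3\times3$ matrix with diagonal entries among $2\cos(2\pi k/5)$ and off-diagonal entries $1$, and its characteristic polynomial has coefficients in $\Bbb Q(\sqrt5)$. I would show that it is irreducible there, for instance by checking that it has no roots in $\Bbb Z[(1+\sqrt5)/2]$, or that its discriminant gives a further extension. Taking the union over all $\chi$, the splitting field then has degree at least $6$. For $p=3,q=5$ the same approach gives a $5\times5$ matrix with entries in $\Bbb Z$, since $\chi+\bar\chi\in\{2,-1\}$, and the task is to show that its characteristic polynomial has an irreducible factor of degree at least $3$. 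If the hand computation becomes unwieldy, I would fall back on a GAP verification of these two specific groups, in the spirit of Table \ref{Table23}.
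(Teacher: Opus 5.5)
Your reduction is correct, and it reaches the same two groups as the paper by a different and cleaner route. The paper argues directly:
\begin{itemize}
\item $C_G(N)$ is shown to be a normal Sylow $p$-subgroup;
\item cyclicity of odd-order Frobenius complements gives $|Q|=q$;
\item the orbit subgroup $P_0=\langle y,y^x,\ldots,y^{x^{q-1}}\rangle$, together with $q\mid p^k-1$, forces the orders $3^4\cdot 5$ and $5^2\cdot 3$;
\item GAP is then used to single out the group of order $405$ with no element of order $15$.
\end{itemize}
Your minimal-counterexample argument (every proper subgroup is a $p$-group, $M$ has prime index, $M$ is irreducible and fixed-point-free for $x$) uses the irreducibility of $\Phi_3$ over $\mathbb{F}_5$ and of $\Phi_5$ over $\mathbb{F}_3$. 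It determines both groups up to isomorphism with no machine search. From there both proofs use the same witness $S=\{x^{\pm1},m^{\pm1}\}$; the paper just reports GAP characteristic polynomials.

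The step that would fail as written is your hand verification for order $75$: the cubic is never irreducible over $\mathbb{Q}(\sqrt5)$. Write $\rho(S)=D+J-I$ with $D=\mathrm{diag}(d_1,d_2,d_3)$ and $d_i=2\cos(2\pi k_i/5)$. Fixed-point-freeness gives $m\,m^x m^{x^2}=1$, so $k_1+k_2+k_3\equiv 0 \pmod 5$. Since $0\pm1\pm2\not\equiv 0 \pmod 5$, two of the $k_i$ agree up to sign, say $d_i=d_j$. Then $e_i-e_j$ is an eigenvector with eigenvalue $d_i-1\in\mathbb{Q}(\sqrt5)$. So no $\chi$ gives an irreducible cubic, and ``degree at least $6$'' cannot be obtained this way.

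The extension actually comes from the residual quadratic factor. Take $k=(1,1,3)$, i.e.\ $D=\mathrm{diag}(\alpha,\alpha,\beta)$ with $\alpha=2\cos(2\pi/5)$ and $\beta=2\cos(4\pi/5)$. On the span of $e_1+e_2$ and $e_3$ the block is $\begin{bmatrix}\alpha+1&1\\2&\beta\end{bmatrix}$, whose characteristic polynomial is $\lambda^2-(7+\sqrt5)/2$. Since $(7+\sqrt5)/2$ has norm $11$, it is not a square in $\mathbb{Q}(\sqrt5)$, so these eigenvalues have degree $4$ over $\mathbb{Q}$. This is exactly the paper's factor $x^4-7x^2+11$; its Galois group is $D_4$, so in fact $\deg(\Gamma)=8$. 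Your discriminant fallback would detect this for this $\chi$, giving degree $\geq 4$ rather than $6$, which still suffices.

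For order $405$ your plan does work. Take $\chi$ trivial on exactly two adjacent conjugates $m^{x^i},m^{x^{i+1}}$. Then the $5\times5$ matrix has the irreducible factor $\lambda^3-\lambda^2-8\lambda+2$, which appears in the paper's list, and also the factor $\lambda^2-2$.
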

\begin{proof}
One direction is clear from Lemma \ref{elab}. Conversely, suppose, for the sake of contradiction, that a finite $\{3,5\}$-group $G$ is Cayley $2$-integral but not elementary abelian. According to Lemma \ref{elab}, it follows that both $3$ and $5$ divide $|G|$. Additionally, by Corollary \ref{order}, we conclude that $G$ is non-abelian. Applying Burnside's $pq$-Theorem indicates that $G$ is solvable. Consequently, $G$ must have a non-trivial elementary abelian normal subgroup $N$. Moreover, $C:=C_G(N)$ is also a normal subgroup of $G$. As established by Corollary \ref{order}, since $G$ contains no element of order $15$, it follows that $C$ is a $p$-group, where $p=3$ or $5$. Let $P$ denotes the Sylow $p$-subgroup of $G$. Thus, we have $N\leq C\leq P$. On the other hand, since $P$ is elementary abelian according to Lemma \ref{elab}, we have $P\leq C$. Therefore, $P=C$, making it a normal Sylow $p$-subgroup of $G$. Let $Q$ represent a Sylow $q$-subgroup of $G$, with the condition that $q\neq p$. The subgroup $Q$ acts Frobeniusly on $P$, as $G$ contains no element of order $15$. Given that $Q$ has odd order, it must be cyclic, and thus of order $q$ by Lemma \ref{elab} and \cite[10.5.5]{Robinson}. Let $Q=\langle x\rangle$ and let $1\neq y\in P$. Consequently, $y,y^x,\ldots,y^{x^{q-1}}$ are distinct, since $Q$ acts fixed-point freely on $P$. We define $P_0:=\langle y,y^x,\ldots,y^{x^{q-1}}\rangle$. Since $P$ is elementary abelian, $P_0$ forms a group of order $p^k$, with $1\leq k\leq q$. Additionally, $H:=P_0Q$ is a subgroup of $G$ with order $p^kq$. Given that $P_0\unlhd H$, $H$ is a Frobenius group with Frobenius complement $Q$ and Frobenius kernel $P_0$. Thus, either $|H|=3^k5$ where $1\leq k\leq 5$, or $|H|=5^k3$ where $1\leq k\leq 3$. In the former case, since $5\mid 3^k-1$, we conclude $k=4$. In the latter case, $3\mid 5^k-1$, which leads us to $k=2$, as noted in \cite[Exercise 8.5.6]{Robinson}. Hence, we find $|H|=3^4.5$ or $5^2.3$. Now we examine all possibilities by GAP. There are eleven non-abelian groups of order $3^4.5$. Among these, the only group that has no element of order $15$ is $G=\langle a,b_1,b_2,b_3,b_4\rangle$, where $a^5=b_1^3=b_2^3=b_3^3=b_4^3=1$, $[b_i,b_j]=1$ for all $i,j$, and 
\[a^{-1}b_1a=b_2b_3^2b_4^2,~a^{-1}b_2a=b_1^2b_3,~a^{-1}b_3a=b_2^2b_4,~a^{-1}b_4a=b_1b_3^2b_4^2.\]
 Put $S=\{a,b_1,a^{-1},b_1^{-1}\}$ and $\Gamma=\Cay(G,S)$. Then one can check by GAP that the characteristic polynomial of $\Gamma$ is $f(x)g(x)$, where  
 \begin{eqnarray*}
 	&&f(x)=(x-4)(x-1)^{20}(x+2)^{30}(x^2-3x-2)^{10}(x^2-3x+1)^{32}(x^2-5)^{20}(x^2-2)^{20},\\
 	&&g(x)=(x^2+3x+1)^{50}(x^3-4x^2-2x+14)^{10}(x^3-x^2-8x+2)^{10}(x^3-x^2-5x-1)^{10}.
 \end{eqnarray*}
 which implies that $G$ is not Cayley $2$-integral, a contradiction. Furthermore, the only non-abelian group of order $5^2.3$ is $G=\langle a,b_1,b_2\rangle$, where $[b_1,b_2]=1$, $a^3=b_1^5=b_2^5=1$, $a^{-1}b_1a=b_1b_2^2$ and $a^{-1}b_2a=b_1b_2^3$. Put $S=\{a,b_1,a^{-1},b_1^{-1}\}$ and $\Gamma=\Cay(G,S)$. Then the characteristic polynomial of $\Gamma$ is 
 \begin{eqnarray*}
 	(x-4)(x-1)^2(x^2-3x+1)^6(x^2-2x-4)^6(x^2+3x+1)^{12}(x^4-7x^2+11)^6.
 \end{eqnarray*}
  Thus $G$ is not a Cayley $2$-integral group. This contradiction completes our assumption, thus concluding the proof.
\end{proof}

\begin{core}\label{key}
	Let $G$ be a finite Cayley $2$-integral group. Then $G$ is a $\{2,3\}$-group or $\{2,5\}$-group.
\end{core}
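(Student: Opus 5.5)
We will argue by contradiction, combining the solvability of $G$ with Hall's theorem and then reducing to Lemma \ref{3,5}. Let $G$ be a finite Cayley $2$-integral group. By Corollary \ref{2,3,5}, $G$ is a $\{2,3,5\}$-group. It therefore suffices to rule out the possibility that both $3$ and $5$ divide $|G|$. Suppose, towards a contradiction, that $15$ divides $|G|$.

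By Lemma \ref{solv}, $G$ is solvable. Hall's theorem therefore gives a Hall $\{3,5\}$-subgroup $H$ of $G$, that is, a subgroup with $|H|=|G|_3\,|G|_5$. In particular, $|H|$ is divisible by $15$. By Lemma \ref{sbg-qoutient}, $H$ is Cayley $2$-integral, being a subgroup of $G$. Since $H$ is a $\{3,5\}$-group, Lemma \ref{3,5} forces $H$ to be an elementary abelian $p$-group for a single prime $p\in\{3,5\}$. This contradicts $15\mid |H|$. Hence $|G|$ is not divisible by $15$, and so $G$ is a $\{2,3\}$-group or a $\{2,5\}$-group.

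Given the earlier results, no step here is a real obstacle. The substantive work was already done in Lemma \ref{solv}, which is needed to guarantee the existence of Hall subgroups, and in Lemma \ref{3,5}, which excludes every Cayley $2$-integral $\{3,5\}$-group of mixed order. One should only check that Hall's theorem is applied to the set of primes $\{3,5\}$ and not merely to a single prime, so that $H$ genuinely has order divisible by $15$.
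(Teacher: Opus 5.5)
Your proof is correct and follows essentially the same route as the paper. The paper likewise uses solvability (Lemma \ref{solv}) to get a Hall $\{3,5\}$-subgroup, notes it is Cayley $2$-integral, and applies Lemma \ref{3,5} to conclude it is a $p$-group for a single $p\in\{3,5\}$, so $15\nmid|G|$.
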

\begin{proof}
By Lemma \ref{solv}, $G$ is solvable. Thus $G$ has a $\{3,5\}$-Hall subgroup $H$ and by Lemma \ref{2,3,5}, $|G:H|=2^n$ for some integer $n$. Now Lemma \ref{3,5} implies that $G$ is a $\{2,3\}$-group or $\{2,5\}$-group.
\end{proof}

\subsection{Non-abelian Cayley $2$-integral $2$-groups}

By Corollary \ref{key}, a finite Cayley $2$-integral group is a $\{2,p\}$-group, where $p=3$ or $5$. In this section, we  classify all finite non-abelian Cayley $2$-integral $2$-groups. We start with the following result.

\begin{lem}\label{16}
	A finite non-abelian group $G$ of order $16$ is Cayley $2$-integral if and only if $G\ncong D_{16}, QD_{16}$.
\end{lem}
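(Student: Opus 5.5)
The argument is a case check over the nine non-abelian groups of order $16$: $D_8\times\Bbb Z_2$, $Q_8\times\Bbb Z_2$, $(\Bbb Z_4\times\Bbb Z_2)\rtimes\Bbb Z_2$ (two versions, items (2) and (7) of Lemma \ref{remains}), $\Bbb Z_4\rtimes\Bbb Z_4$, the modular group $\Bbb Z_8\rtimes\Bbb Z_2$, $D_{16}$, $QD_{16}$ and $Q_{16}$. We identify each with the corresponding presentation already treated in the paper and quote the results there.

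For the ``only if'' direction, rows $(23)$ and $(24)$ of Table \ref{Table23} exhibit explicit inverse-closed sets $S$ with $\deg(\Cay(G,S))=4>2$ for $G=D_{16}$ and $G=QD_{16}$. We can verify these directly. Take $D_{16}$ with $S=\{b,ba\}$. Then $\Cay(D_{16},S)$ is the $16$-cycle, whose eigenvalues $2\cos(2\pi j/16)$ generate $\Bbb Q(\cos(\pi/8))$, a field of degree $4$. For $QD_{16}$ we evaluate the degree-$2$ faithful representations on $S=\{(ab^{-1})^{\pm1},b,a^{\pm1}\}$. The traces involve $\zeta_8+\zeta_8^{3}=\textbf{i}\sqrt2$, and the eigenvalues generate a degree-$4$ field. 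So neither group is Cayley $2$-integral.

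For the ``if'' direction, $Q_8\times\Bbb Z_2$ is Cayley integral by Theorem \ref{azhv}, hence Cayley $2$-integral. The remaining six groups are items $(2)$–$(7)$ of Lemma \ref{remains}: $(\Bbb Z_4\times\Bbb Z_2)\rtimes\Bbb Z_2$ twice, $\Bbb Z_4\rtimes\Bbb Z_4$, $\Bbb Z_8\rtimes\Bbb Z_2$, $Q_{16}$ and $D_8\times\Bbb Z_2$. All of them are shown there to be Cayley $2$-integral.

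To confirm that these nine groups exhaust the non-abelian groups of order $16$, we use the standard list $G_{16,r}$ with $r\in\{3,4,6,7,8,9,11,12,13\}$ and check each presentation against it.

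The main obstacle is the substance hidden in Lemma \ref{remains}, in particular $Q_{16}$ and $\Bbb Z_8\rtimes\Bbb Z_2$, which have elements of order $8$. For these I would redo the argument explicitly. Each group has only linear characters, which are $\pm1$- or $\pm\textbf{i}$-valued, and degree-$2$ representations. Write $\rho(S)$ as an integer combination of $\rho(x)+\rho(x^{-1})$ over the pairs in $S$. In $Q_{16}$ the faithful representations send $a\mapsto\mathrm{diag}(\zeta_8,\zeta_8^{-1})$, and $\rho(a^k)+\rho(a^{-k})$ is scalar, equal to $\zeta_8^k+\zeta_8^{-k}\in\{0,\pm\sqrt2\}$ for $k$ odd. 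The elements $a^ib$ satisfy $(a^ib)^{-1}=a^{i+4}b$ and $\rho(a^4)=-I$, so their pairs contribute $0$. Hence every eigenvalue lies in $\Bbb Q(\sqrt2)$.

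For $\Bbb Z_8\rtimes\Bbb Z_2$, $\rho(a)=\mathrm{diag}(\zeta_8,\zeta_8^5)$ and $\rho(b)$ swaps the coordinates. The pair sums give a combination $\alpha I+\beta\,\mathrm{diag}(\sqrt2\textbf{i},-\sqrt2\textbf{i})$-type terms plus off-diagonal terms. The resulting eigenvalues have the form $\alpha\pm\sqrt{2m}$ for an integer $m$, so the splitting field is $\Bbb Q(\sqrt2)$ or $\Bbb Q$. The remaining groups have exponent $4$, and the same computation gives eigenvalues $\alpha\pm\sqrt{m}$ with a uniform field per $S$.
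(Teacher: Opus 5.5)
Your main argument is the paper's proof. You list the nine groups $G_{16,r}$, $r\in\{3,4,6,7,8,9,11,12,13\}$, exclude $D_{16}$ and $QD_{16}$ by rows $(23)$ and $(24)$ of Table~\ref{Table23}, and quote Theorem~\ref{azhv} for $Q_8\times\Bbb Z_2$ and Lemma~\ref{remains} (that is, Corollary~\ref{coremain}) for the other six. As a citation-based proof this is fine. Your $16$-cycle check for $D_{16}$ is also correct.

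The explicit re-derivation you offer for the ``main obstacle'' is wrong in one place. Take $\Bbb Z_8\rtimes\Bbb Z_2=\langle a,b\mid a^8=b^2=1,\ bab=a^5\rangle$ with your $\rho$ and $S=\{a,a^{-1},b\}$. Then $\rho(S)=\begin{pmatrix}\sqrt2&1\\1&-\sqrt2\end{pmatrix}$, whose eigenvalues are $\pm\sqrt3$. So the claim that all eigenvalues are $\alpha\pm\sqrt{2m}$, with field $\Bbb Q$ or $\Bbb Q(\sqrt2)$, is false.

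Here is the correct computation over the inverse-closed pieces:
\begin{itemize}
\item $\{a^{\pm2}\}$ and $\{a^2b,a^6b\}$ map to $0$, and $a^4$ maps to $-I$;
\item $\{a^{\pm1}\}$ and $\{a^{\pm3}\}$ map to $\pm\mathrm{diag}(\sqrt2,-\sqrt2)$;
\item $b$ and $a^4b$ map to $\pm\rho(b)$;
\item $\{ab,a^3b\}$ and $\{a^5b,a^7b\}$ map to $\pm\sqrt2\begin{pmatrix}0&\textbf{i}\\-\textbf{i}&0\end{pmatrix}$.
\end{itemize}
The eigenvalues are therefore $c\pm\sqrt{2u^2+v^2+2w^2}$ with $u,v,w\in\{-1,0,1\}$, so $\Bbb Q(\sqrt3)$ and $\Bbb Q(\sqrt5)$ also occur. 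The group is Cayley $2$-integral because both faithful degree-$2$ representations ($\zeta=\zeta_8$ or $\zeta_8^3$) give the same discriminant for each $S$, and the linear characters give integers.

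That ``same quadratic field across all degree-$2$ irreducibles'' property is exactly the substance of Lemma~\ref{remains}. For the exponent-$4$ groups you only assert it (``uniform field per $S$''). Proving it needs a parity or sign argument relating the different degree-$2$ representations.

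Two smaller points:
\begin{itemize}
\item For $Q_{16}$ you handled only the faithful representations. In the third degree-$2$ representation, which factors through $Q_{16}/\langle a^4\rangle\cong D_8$, the pairs $\{a^ib,a^{i+4}b\}$ contribute $2\bar\rho(a^ib)$, not $0$. The eigenvalues there are $c\pm2\sqrt{x^2+y^2}$ with $x,y\in\{-1,0,1\}$, so your conclusion $\Bbb Q(\sqrt2)$ survives.
\item For $QD_{16}$ with the row-$(24)$ set, $(ab)^2=a^4$ makes the pair $(ab)^{\pm1}$ cancel in the faithful representations, which give $\pm\sqrt3$. The representation through $D_8$ gives $\pm\sqrt5$. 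So the field is $\Bbb Q(\sqrt3,\sqrt5)$, and the trace $\textbf{i}\sqrt2$ plays no role.
\end{itemize}
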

\begin{proof}
	There are nine non-abelian groups of order $16$ including the dihedral group $D_{16}$ and the semidihedral group $QD_{16}$, which both are not Cayley $2$-integral by Table \ref{Table23}. The remaining groups are Cayley $2$-integral by Corollary \ref{coremain}.
\end{proof}

	\begin{lem}\label{32}
		A finite non-abelian $G$ of order $32$ is Cayley $2$-integral group if and only if $G\cong  G_{32,50}, G_{32,35}, G_{32,26}, G_{32,23}$ or $G_{32,47}\cong Q_8\times\Bbb Z_2^2$.
	\end{lem}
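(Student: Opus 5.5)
The argument is a case check over the $51$ groups of order $32$, of which $44$ are non-abelian. Every non-abelian group of order $32$ is settled by an earlier explicit result, except $Q_8\times\Bbb Z_2^2=G_{32,47}$, which needs its own argument.

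\emph{Sufficiency.} Lemma \ref{32yes} shows that $G_{32,50}$, $G_{32,35}$, $G_{32,26}$ and $G_{32,23}$ are Cayley $2$-integral. Theorem \ref{azhv} shows that $Q_8\times\Bbb Z_2^2$ is Cayley integral, hence Cayley $2$-integral. So these five groups satisfy the statement, with nothing new to prove.

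\emph{Necessity, step one: exclusion by a bad subgroup or quotient.} By Lemma \ref{sbg-qoutient}, a group of order $32$ that contains $D_{16}$ or $QD_{16}$ as a subgroup, or has one of them as a quotient, is not Cayley $2$-integral (Lemma \ref{16}). Corollary \ref{2,3,5} rules out every group with an element of order $16$, since order $16$ is not in its list. This removes, for instance, $D_{32}$, $QD_{32}$, $Q_{32}$, the modular group of order $32$, and $D_{16}\times\Bbb Z_2$, $QD_{16}\times\Bbb Z_2$, $Q_{16}\times \Bbb Z_2$. The last group falls because $Q_{16}\times\Bbb Z_2$ has the quotient $Q_{16}\times\Bbb Z_2/\langle(a^4,c)\rangle$, which is $D_{16}$ or $QD_{16}$, and the same kind of quotient check handles the other cases.

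\emph{Necessity, step two: explicit witnesses.} Every non-abelian group of order $32$ not handled in step one should appear in Table \ref{table:2}. Each row of that table gives an explicit inverse-closed set $S$ with $\deg(\Cay(G_{32,r},S))\in\{4,6,8\}$, computed from the characteristic polynomial or, equivalently, from $\rho(S)$ over $\textrm{Irr}(G)$ via Remark \ref{rep}. The rows cover $r\in\{2,4,5,\dots,15,22,24,25,27,\dots,34,37,38,41,\dots,44,46,48,49\}$. So the proof goes through the SmallGroups ids $r$ of the non-abelian groups of order $32$ one by one. For each $r$, the group is either one of the five exceptions, killed by a subgroup, quotient or element-order argument from step one, or listed in Table \ref{table:2}.

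The main obstacle is \textbf{completeness of this bookkeeping}. The ids not in the table, namely $17,18,19,20,40,42,\dots$, must be exactly the groups with an element of order $16$ or a $D_{16}$/$QD_{16}$ section; for example $G_{32,18},G_{32,19},G_{32,20}$ are $D_{32},QD_{32},Q_{32}$, and $G_{32,17}$ is the modular group. For $r=39,40$ ($D_{16}\times\Bbb Z_2$, $QD_{16}\times\Bbb Z_2$) there is a direct subgroup argument, and $r=41$ and $r=42$ are already tabulated. For any remaining id, say $G_{32,40}$ and $G_{32,45}=Q_8\rtimes\Bbb Z_4$-type groups, I would first look for a $D_{16}$ or $QD_{16}$ subgroup or quotient. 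If that fails, I would apply Remark \ref{orbit} with a small $S$ of size at most $4$ and find a degree-$2$ irreducible representation $\rho$ with $\rho(S)$ having eigenvalues $\pm\sqrt{2}$ in one component and $\pm\sqrt{3}$ (or $\sqrt{5}$) in another. That forces $\deg(\Gamma)\geq 4$, by the same style of computation as in Lemma \ref{q8z3}. This exhausts all $44$ groups and proves the lemma.
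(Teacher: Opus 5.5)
Your overall route is the paper's. Sufficiency comes from Lemma \ref{32yes} and Theorem \ref{azhv}. Necessity discards the ids with an element of order $16$ (Corollary \ref{2,3,5}), then discards $G_{32,39}\cong D_{16}\times\Bbb Z_2$ and $G_{32,40}\cong QD_{16}\times\Bbb Z_2$ through their $D_{16}$ and $QD_{16}$ subgroups (Lemmas \ref{sbg-qoutient} and \ref{16}), and kills the rest with Table \ref{table:2}. The problems are in the execution.

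First, your step-one exclusion of $Q_{16}\times\Bbb Z_2$ is false. $Q_{16}\times 1$ is a complement to $\langle (a^4,c)\rangle$, so that quotient is isomorphic to $Q_{16}$, not to $D_{16}$ or $QD_{16}$. In fact the order-$16$ quotients of this group are $D_8\times\Bbb Z_2$ and $Q_{16}$. Its maximal subgroups are isomorphic to $Q_{16}$, $Q_8\times\Bbb Z_2$ or $\Bbb Z_8\times\Bbb Z_2$, and all of these are Cayley $2$-integral. So $G_{32,41}=Q_{16}\times\Bbb Z_2$ cannot be removed by any subgroup or quotient argument at order $16$. It genuinely needs the explicit witness in row $41$ of Table \ref{table:2}. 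For the same reason, your remark that ``the same kind of quotient check handles the other cases'' has no content.

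Second, the completeness check, which you correctly identify as the crux, is left open, and the data you feed into it is wrong:
\begin{itemize}
\item $G_{32,42}$ is row $42$ of the table, not an untabulated id.
\item $G_{32,45}\cong\Bbb Z_4\times\Bbb Z_2^3$ is abelian, so it is not a remaining ``$Q_8\rtimes\Bbb Z_4$-type'' group.
\item $G_{32,40}$ had already been disposed of.
\end{itemize}
Your fallback (look for a $D_{16}$/$QD_{16}$ section, otherwise find some $S$ with eigenvalues $\pm\sqrt2$ and $\pm\sqrt3$) names no group and no set, so it proves nothing.

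What closes the argument, exactly as in the paper, is the count. The non-abelian ids are $\{2,\dots,50\}\setminus\{3,16,21,36,45\}$, $44$ in total, and the table covers $33$ of them. The remaining eleven are:
\begin{itemize}
\item $17,18,19,20$: the modular group $M_{32}$, $D_{32}$, $QD_{32}$ and $Q_{32}$, each of exponent $16$;
\item $39,40$: excluded by their $D_{16}$ and $QD_{16}$ subgroups;
\item $23,26,35,47,50$: precisely the five groups in the statement.
\end{itemize}
Nothing is left over, so no new eigenvalue computation is needed. Also, $G_{32,47}$ does not ``need its own argument'' as your first sentence says; Theorem \ref{azhv} covers it, as your sufficiency paragraph already notes.
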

	\begin{proof}
		One direction is clear by Lemma \ref{32yes} and Theorem \ref{azhv}. Conversely, suppose that $G$ is a non-abelian Cayley $2$-integral group of order $32$. Since $G$ is non-abelian, by GAP, we have $G=G_{32,r}$, where $r\in\{2,\ldots,50\}$ and $r\neq 3,16,21,36,45$. By Corollary \ref{2,3,5}, $r\neq 17,18,19,20$, since the exponent of  groups $G_{32,17},G_{32,18},G_{32,19}, G_{32,20}$  is $16$. Also, by Lemma \ref{16}, $r\neq 39, 40$, because $G_{39,40}$ has a subgroup isomorphic to $D_{16}$ and $G_{32,40}$ has a subgroup isomorphic to $QD_{16}$. Now by Table \ref{table:2}, the only possibilities are $r=23,26,35,47,50$, which completes the proof.
	\end{proof}
	
	Similarly, one can prove the following result.
	\begin{lem}\label{68}
		A finite non-abelian group $G$ of order $2^6$ is Cayley $2$-integral if and only if $G\cong Q_8\times\Bbb Z_2^{3}$.
	\end{lem}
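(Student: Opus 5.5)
The plan is to mirror the proof of Lemma \ref{32}: use Lemma \ref{sbg-qoutient} to rule out most groups of order $64$ through their subgroups of order $32$ and $16$, and handle the few survivors by explicit Cayley graphs. One direction is immediate. $Q_8\times\Bbb Z_2^{3}$ is Cayley integral by Theorem \ref{azhv}, so it is Cayley $2$-integral.

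For the converse, let $G$ be a non-abelian Cayley $2$-integral group of order $64$. By Corollary \ref{2,3,5}, $\exp(G)\leq 8$. By Lemma \ref{sbg-qoutient}, every subgroup of $G$ of order $16$ or $32$ is Cayley $2$-integral. Each such subgroup is either abelian, hence one of the groups in Proposition \ref{baseab}, or non-abelian. In the non-abelian case it is not $D_{16}$ or $QD_{16}$ by Lemma \ref{16}, and at order $32$ it is one of $G_{32,23},G_{32,26},G_{32,35},G_{32,47},G_{32,50}$ by Lemma \ref{32}. The same restrictions apply to the quotients $G/N$ with $|N|=2$ and $|N|=4$.

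The main filtering step is computational. I would run through the $267$ groups of order $64$ in the GAP library and discard:
\begin{itemize}
\item the abelian ones;
\item those with an element of order $16$ or more;
\item those containing a subgroup of order $32$, or having a quotient of order $32$, that is non-abelian and not among the five groups above, or abelian of a form excluded by Proposition \ref{baseab} (for instance $\Bbb Z_8\times\Bbb Z_2\times\Bbb Z_2$ is allowed, but $\Bbb Z_8\times \Bbb Z_4$ is not, since $4$ and $8$ occur together only in the family $\Bbb Z_2^r\times\Bbb Z_4^s\times\Bbb Z_8^t$, which does contain it, so it must be checked against the list with care).
\end{itemize}
I expect this subgroup and quotient test to eliminate nearly everything, because all maximal subgroups of a survivor must lie in a very short list. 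Structurally, a non-abelian survivor has every maximal subgroup among $\Bbb Z_2^5$, $\Bbb Z_2^3\times\Bbb Z_4$ and similar abelian groups, or among $Q_8\times\Bbb Z_2^2$ and the four exceptional groups of order $32$.

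The main obstacle is the residual set of groups of order $64$ all of whose proper subgroups and quotients pass the test, apart from $Q_8\times\Bbb Z_2^3$. For each of these I would give an explicit witness in the style of Table \ref{table:2}: a small inverse-closed generating set $S$ for which the relevant irreducible representation $\rho$ of degree $2$, $4$ or $8$ makes $\rho(S)$ have eigenvalues involving two independent square roots, or a quartic factor, so that $\deg(\Cay(G,S))\geq 4$. To find such an $S$ I would first try sets of the shape $\{x^{\pm1},y^{\pm1}\}$ or $\{x^{\pm1},y,z\}$ with $x$ of order $8$ or $4$. The combinations $\sqrt2$ (from $\Bbb Z_8$-type pieces) and $\sqrt3$ or $\sqrt5$ (from sums of anticommuting matrices, as in the proof of Lemma \ref{q8z3}) typically produce degree $4$. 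If these fail, I would search the $\Aut(G)$-orbit representatives as in Remark \ref{orbit}. Recording these witnesses in a table completes the proof.
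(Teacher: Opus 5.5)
This is essentially the paper's route: the paper only says the lemma is proved ``similarly'' to Lemma \ref{32}, i.e.\ by sweeping the GAP library of groups of order $64$, discarding those with a forbidden subgroup and exhibiting explicit non-$2$-integral Cayley graphs for the rest. Your added quotient test genuinely helps: for instance, every maximal subgroup of $Q_8\times\Bbb Z_4\times\Bbb Z_2$ lies on your allowed list, but this group maps onto $G_{32,48}$.

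One slip to fix: the abelian part of your filter eliminates nothing. Once $\exp(G)\leq 8$, every abelian subgroup or quotient of $G$ has the form $\Bbb Z_2^r\times\Bbb Z_4^s\times\Bbb Z_8^t$ and is therefore Cayley $2$-integral by Lemma \ref{exp}; in particular $\Bbb Z_8\times\Bbb Z_4$ is allowed. So all the filtering power comes from the non-abelian subgroups and quotients of orders $16$ and $32$.
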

\begin{lem}\label{Dedekind}
	A finite non-abelian $2$-group $G$ of order $2^n\geq 128$ is Cayley $2$-integral if and only if $G\cong Q_8\times\Bbb Z_2^{n-3}$.
\end{lem}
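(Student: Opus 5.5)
Induction on $n\geq 7$, with Lemma \ref{68} (order $2^6$) as the base. The "if" direction follows from Theorem \ref{azhv}: $Q_8\times\Bbb Z_2^{n-3}$ is Cayley integral, hence Cayley $2$-integral.

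For the converse, let $G$ be a non-abelian Cayley $2$-integral group of order $2^n\geq 128$. By Lemma \ref{sbg-qoutient}, every subgroup and every quotient of $G$ is Cayley $2$-integral. Combined with Proposition \ref{baseab} and Lemmas \ref{16}, \ref{32}, \ref{68}, together with the inductive hypothesis, this gives the following. Every subgroup of $G$ of order $2^k$ with $6\leq k<n$ is either abelian of the form $\Bbb Z_2^r\times\Bbb Z_4^s\times\Bbb Z_8^t$ or isomorphic to $Q_8\times\Bbb Z_2^{k-3}$. The same dichotomy holds for every quotient of $G$ of such order.

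\textbf{Key step: $G$ has exponent $4$.} Suppose $G$ has an element $x$ of order $8$. Take a maximal subgroup $M$ of $G$ containing $x$; it has order $2^{n-1}\geq 64$. $M$ cannot be $Q_8\times\Bbb Z_2^{n-4}$, since that group has exponent $4$, so $M$ is abelian. For $g\in G\setminus M$, consider $H=\langle x,g\rangle$.
\begin{itemize}
\item If $H$ is a proper subgroup with $|H|\geq 64$, then $H$ is non-abelian (because $g\notin C_G(M)$, after choosing $M$ suitably) and contains an element of order $8$, contradicting the dichotomy.
\item If $|H|\leq 32$, then $H$ must be one of the small Cayley $2$-integral groups listed in Theorem \ref{main}(7)--(16).
\end{itemize}
Since $G$ is non-abelian of order at least $128$, one can choose $g$ and $M$ so that some such $H$ is non-abelian. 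Then enlarge $H$ by central or other elements of $G$ until its order reaches $64$, staying non-abelian, which again contradicts Lemma \ref{68}. \emph{This is the main obstacle:} doing the enlargement cleanly. The approach I would try first is to pick a subgroup $K$ with $H<K<G$ and $|K|=64$ (possible since $n\geq 7$ and $2$-groups have subgroups of every order containing a given subgroup). $K$ is non-abelian and contains an element of order $8$, contradicting Lemma \ref{68}. In fact this trick handles all cases: any non-abelian subgroup $H$ of order at most $64$ containing an element of order $8$ embeds in a subgroup of order $64$, which then contradicts Lemma \ref{68}. So it suffices to find a non-abelian $\langle x,g\rangle$ with $|x|=8$. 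If no such $g$ exists, then $x$ commutes with all of $G$, so it is central. In that case, take any non-commuting pair $u,v$ and look at $\langle x,u,v\rangle$, which is non-abelian of order at most $8\cdot|\langle u,v\rangle|$. Using that $2$-generated Cayley $2$-integral groups are small (again by the dichotomy), embed it in a subgroup of order $64$ to get the contradiction.

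\textbf{Concluding $G\cong Q_8\times\Bbb Z_2^{n-3}$.} Once $\exp(G)=4$, the same embedding trick applies. Every non-abelian subgroup of $G$ lies in a subgroup of order $64$, which must be $Q_8\times\Bbb Z_2^3$. So every $2$-generated non-abelian subgroup of $G$ is $Q_8$, and every pair of non-commuting elements generates $Q_8$; in particular, every element of order $4$ lies in such a $Q_8$. I would then show that every subgroup of $G$ is normal: for $y\in G$ and $g$ not commuting with $y$, the group $\langle y,g\rangle\cong Q_8$ normalizes $\langle y\rangle$. Hence $G$ is a Hamiltonian $2$-group. By Dedekind--Baer, $G\cong Q_8\times\Bbb Z_2^{n-3}$.
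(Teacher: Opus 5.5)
Your route (show that every non-commuting pair generates $Q_8$, then apply Dedekind--Baer) differs from the paper's and is viable. As written, however, it has a genuine hole exactly where you flag ``the main obstacle''.

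The dichotomy you derive from Lemma~\ref{68} and the induction hypothesis only constrains \emph{proper} subgroups and quotients of $G$. You apply it to $2$-generated subgroups in three places: $\langle x,g\rangle$ in the exponent step, $\langle u,v\rangle$ in the central case, and the non-commuting pairs in the final step. Each time you need that subgroup to be proper, i.e.\ you need $G$ itself not to be $2$-generated. Nothing in the proposal proves this. Your phrase ``$2$-generated Cayley $2$-integral groups are small (again by the dichotomy)'' is precisely that unproved claim.

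Two further steps are wrong as stated. In the central case, even with $|\langle u,v\rangle|\leq 32$, the bound $|\langle x,u,v\rangle|\leq 8|\langle u,v\rangle|$ does not place $\langle x,u,v\rangle$ inside a subgroup of order $64$, and it may even equal $G$. And ``every non-abelian subgroup of $G$ lies in a subgroup of order $64$'' holds only for subgroups of order at most $64$.

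The missing ingredient can be supplied from quotients. Suppose $G$ were $2$-generated, and let $z$ be any central involution. Then $G/\langle z\rangle$ is a $2$-generated Cayley $2$-integral group of order $2^{n-1}\geq 64$. By Lemma~\ref{68} or induction it is abelian or $Q_8\times\mathbb{Z}_2^{n-4}$, and the latter needs $n-2\geq 5$ generators. So it is abelian and $G'=\langle z\rangle$. Since $z$ was arbitrary, $Z(G)$ has a unique involution, hence is cyclic of order at most $8$. Since $|G'|=2$ and $G'\leq Z(G)$, we get $[a^2,b]=[a,b]^2=1$, so $G/Z(G)$ is elementary abelian of rank at most $2$. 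This gives $|G|\leq 32$, a contradiction.

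With this in hand the rest goes through. For non-commuting $u,v$, the subgroup $\langle u,v\rangle$ is proper. It cannot have order $2^k\geq 64$, since $Q_8\times\mathbb{Z}_2^{k-3}$ is not $2$-generated. So it lies in a subgroup $K$ of order $64$, which is non-abelian, hence $K\cong Q_8\times\mathbb{Z}_2^3$ by Lemma~\ref{68}. In that group any two non-commuting elements generate $Q_8$. Your Hamiltonian/Dedekind--Baer conclusion then follows, and the exponent-$4$ step is not needed at all.

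The paper instead passes to $G/\langle x\rangle$ for a central involution $x$, identifies it by induction, and reassembles $G$ from two maximal subgroups $M_1,M_2$. Your repaired local argument avoids that gluing.
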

\begin{proof}
Since \( Q_8 \times \mathbb{Z}_2^{n-3} \) is  Cayley integral by Theorem \ref{azhv}, one direction of our argument is straightforward. Now, let's consider the opposite scenario where \( G \) is  Cayley 2-integral. We will apply induction on the order of \( G \). Let \( x \) represent a central involution, and consider the quotient group \( \overline{G} = G/\langle x \rangle \). According to Corollary \ref{68}, we establish that \( |\overline{G}| \geq 64 \), and by the induction hypothesis, it follows that \( \overline{G} \cong Q_8 \times \mathbb{Z}_2^{n-4} \), where \( n-4 \geq 3 \). 
As a result, \( \overline{G} \) possesses two distinct maximal subgroups \( \overline{M_i} \) for \( i=1,2 \), both of which are isomorphic to \( Q_8 \times \mathbb{Z}_2^{n-5} \). Employing induction, we find that \( M_i \cong Q_8 \times \mathbb{Z}_2^{n-4} \) for \( i=1,2 \). This implies that \( \langle x \rangle \) is a direct factor in both \( M_i \).
Furthermore, we can express \( M_i \) as \( (M_1 \cap M_2) \times \langle y_i \rangle \) for some involution \( y_i \) within \( M_i \), where \( i=1,2 \). Here, \( M_1 \cap M_2 = N \times \langle x \rangle \) for some subgroup \( N \cong Q_8 \times \mathbb{Z}_2^{n-6} \). Notably, we have \( G = M_1M_2 = (M_1)\langle y_2 \rangle \cong ((N \times \langle y_1 \rangle)\langle y_2 \rangle)\times \langle x \rangle \). Since \( G/\langle x \rangle \cong Q_8 \times \mathbb{Z}_2^{n-4} \cong (N \times \langle y_1 \rangle)\langle y_2 \rangle \), we deduce that \( (N \times \langle y_1 \rangle)\langle y_2 \rangle \cong N \times \langle y_1 \rangle \times \langle y_2 \rangle \). 
Therefore, we conclude that \( G \cong N \times \langle y_1 \rangle \times \langle y_2 \rangle \times \langle x \rangle \cong Q_8 \times \mathbb{Z}_2^{n-3} \), as was to be shown. 
\end{proof}

By Lemma \ref{dihedral}, $D_8$ is Cayley $2$-integral. Also $Q_8$ is Cayley integral and so it is Cayley $2$-integral. Now combining Lemmas \ref{16}-\ref{Dedekind}, we give a complete classification of non-abelian Cayley $2$-integral $2$-groups as follows:
\begin{pro}\label{non-ab-2groups}
	Let $G$ be a finite non-abelian $2$-group of order $2^n$. Then $G$ is a Cayley $2$-integral group if and only if
	\begin{itemize}
		\item[(1)] $n=3$, $G\cong D_8$ or $Q_8$.
		\item[(2)] $n=4$, $G\ncong D_{16}, QD_{16}$.
		\item[(3)] $n=5$, $G\cong  G_{32,50}, G_{32,35}, G_{32,26}, G_{32,23}$ or $Q_8\times\Bbb Z_2^2$.
		\item[(4)] $n\geq 6$, $G\cong Q_8\times\Bbb Z_2^{n-3}$. 
	\end{itemize}
\end{pro}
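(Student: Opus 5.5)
This proposition is essentially a bookkeeping statement that assembles the preceding results, so my plan is to prove it by splitting according to the order $2^n$ of the non-abelian $2$-group $G$. Since $G$ is non-abelian, $n\geq 3$, and I will treat $n=3,4,5$, $n=6$ and $n\geq 7$ separately.

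For $n=3$, the only non-abelian groups of order $8$ are $D_8$ and $Q_8$. The group $D_8$ is Cayley $2$-integral by Lemma \ref{dihedral}. The group $Q_8$ is Cayley integral by Theorem \ref{azhv}, and hence Cayley $2$-integral. So both directions hold trivially in this case.

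For $n=4$, the statement is exactly Lemma \ref{16}. For $n=5$, it is Lemma \ref{32}, where I use the identification $G_{32,47}\cong Q_8\times\Bbb Z_2^2$; the latter group is Cayley integral by Theorem \ref{azhv}.

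For $n=6$, I invoke Lemma \ref{68}. To justify Lemma \ref{68} along the lines of Lemma \ref{32}, the plan is as follows.
\begin{enumerate}
\item Every proper subgroup and every quotient of a Cayley $2$-integral group of order $64$ must lie in the lists for orders $8$, $16$ and $32$ (Lemma \ref{sbg-qoutient}), and the group must have exponent at most $8$ (Corollary \ref{2,3,5}).
\item I would use these constraints to discard almost all of the $267$ groups of order $64$, by checking maximal subgroups and central quotients against Lemma \ref{32}.
\item For any group that survives, I would exhibit a Cayley graph of algebraic degree greater than $2$, as in Table \ref{table:2}.
\end{enumerate}
The converse direction again follows from Theorem \ref{azhv}.

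For $n\geq 7$, I apply Lemma \ref{Dedekind}, whose induction starts from the order-$64$ case. The induction takes a central involution $x$, identifies $G/\langle x\rangle$ by the inductive hypothesis, and uses two maximal subgroups to split off $\langle x\rangle$ as a direct factor.

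The main obstacle is the order-$64$ step (Lemma \ref{68}). It is the base of the induction, and it needs a computational elimination rather than a structural argument. The subgroup/quotient filter should leave only a handful of candidates, each requiring an explicit bad connection set. Everything else in the proof is just citation.
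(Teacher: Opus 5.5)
Your proposal is correct and follows the paper's argument exactly: the paper also settles $n=3$ via Lemma \ref{dihedral} and Theorem \ref{azhv} and otherwise just combines Lemmas \ref{16}, \ref{32}, \ref{68} and \ref{Dedekind}, and your outline for Lemma \ref{68} only fills in what the paper leaves as ``similarly one can prove''. If you unpack the cited induction for $n\geq 7$, note that the central involution $x$ must be chosen with $G'\neq\langle x\rangle$, so that $G/\langle x\rangle$ is non-abelian and the inductive hypothesis (or Lemma \ref{68}) applies; such a choice is impossible only when $|G'|=2$ and $Z(G)$ is cyclic, a case the paper's proof skips and which must be excluded separately (there $G$ contains $2^{1+4}_+\cong G_{32,49}$).
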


\subsection{Non-abelian Cayley $2$-integral $\{2,3\}$-groups}

\begin{lem}\label{nilpotent}
  Let $G$ be a finite non-abelian nilpotent $\{2,3\}$-group which is not a $p$-group. Then $G$ is Cayley $2$-integral if and only if $G\cong  Q_8\times\Bbb Z_3$.
	\end{lem}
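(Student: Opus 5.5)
Since $G$ is nilpotent, $G=P\times Q$ with $P$ the Sylow $2$-subgroup and $Q$ the Sylow $3$-subgroup, both nontrivial. One direction is Lemma \ref{q8z3}. For the converse, assume $G$ is Cayley $2$-integral. By Lemma \ref{sbg-qoutient} both $P$ and $Q$ are Cayley $2$-integral. By Lemma \ref{elab}, $Q\cong\Bbb Z_3^m$ with $m\geq 1$. Since $G$ is non-abelian, $P$ is a non-abelian Cayley $2$-integral $2$-group, so it appears in the list of Proposition \ref{non-ab-2groups}.

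The first restriction comes from element orders. By Corollary \ref{2,3,5}, $G$ has no element of order $24$, so $P$ has exponent at most $4$; this removes $\Bbb Z_8\rtimes\Bbb Z_2$ and $Q_{16}$. Next I reduce to $Q\cong \Bbb Z_3$. Any subgroup $P_0\times\Bbb Z_3$ with $P_0\leq P$ is a subgroup of $G$, hence Cayley $2$-integral by Lemma \ref{sbg-qoutient}. It therefore suffices to show that every non-abelian $P$ in the list, other than groups of the form $Q_8\times \Bbb Z_2^k$, contains a subgroup $P_0$ with $P_0\times \Bbb Z_3$ not Cayley $2$-integral.

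The key witness is $D_8$. Table \ref{Table23}(12) shows $D_8\times\Bbb Z_3$ is not Cayley $2$-integral. So any $P$ containing a copy of $D_8$ is excluded. I will check, using the structure in Proposition \ref{non-ab-2groups}, that every non-abelian group of order $16$ or $32$ in the list with exponent $4$ either contains $D_8$ or is $Q_8\times\Bbb Z_2^k$.
\begin{itemize}
\item $Q_8\times\Bbb Z_4$ does not contain $D_8$. It has a quotient $Q_8\times\Bbb Z_4/\langle (x^2,a^2)\rangle$, and I expect this quotient, or the group itself times $\Bbb Z_3$, to be excluded by a direct eigenvalue computation or a Table \ref{Table23} entry.
\item The same applies to $\Bbb Z_4\rtimes\Bbb Z_4$ and $G_{32,35}$. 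A non-abelian $2$-group with no $D_8$ subgroup and exponent $4$ may still be non-Dedekind, so these groups need separate treatment.
\end{itemize}
For such $P_0$ I will compute directly. The group $\Bbb Z_4\rtimes\Bbb Z_4\times\Bbb Z_3$ is, up to a central factor, $G_{48,12}$ or $G_{48,11}$, which appear in rows 14 and 15 of Table \ref{Table23}. Likewise $Q_8\times\Bbb Z_4\times\Bbb Z_3$ contains $Q_8\times\Bbb Z_{12}$, which I would handle with an explicit $S$ such as $\{(x,a)^{\pm1},(y,a)^{\pm1}\}$ with $a$ of order $12$, or via row 22, $\Bbb Z_6\times Q_8$, if that suffices after quotienting. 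Any remaining $P$ is then Hamiltonian, so $P\cong Q_8\times \Bbb Z_2^k$.

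Finally I must show $k=0$ and $m=1$. For $m=1$, I will show $Q_8\times\Bbb Z_3^2$ is not Cayley $2$-integral. In the $\rho_5\otimes\psi$ computation of Lemma \ref{q8z3}, two independent characters of $\Bbb Z_3^2$ produce eigenvalues $\alpha\pm\sqrt{3(\cdot)}$ and $\alpha'\pm\sqrt{3(\cdot)}$. The $\sqrt{-3}$ factors should behave differently under the two characters, so I will choose $S$ to give two incompatible square roots, for example $\sqrt{3}$ and $\sqrt{6}$, or an irrational degree-$4$ eigenvalue. For $k=0$, row 22 of Table \ref{Table23} shows $Q_8\times\Bbb Z_6$ is not Cayley $2$-integral, so $Q_8\times\Bbb Z_2\times\Bbb Z_3$ is excluded, and hence so is any $Q_8\times\Bbb Z_2^k\times\Bbb Z_3^m$ with $k\geq1$. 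This gives $G\cong Q_8\times\Bbb Z_3$.

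The main obstacle is the middle step: exhaustively certifying that every non-Hamiltonian $P$ in the list contains either $D_8$ or another subgroup $P_0$ with $P_0\times \Bbb Z_3$ excluded by an explicit witness. I would first try to reduce this to the minimal non-Hamiltonian subgroups of exponent $4$, namely $D_8$, $\Bbb Z_4\rtimes\Bbb Z_4$ and $\langle a,b\mid a^4=b^4=1,[a,b]=a^2\rangle$-type groups of order $16$, and supply one Cayley set for each.
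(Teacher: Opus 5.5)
You have a genuine gap in the middle step: the reduction you set up does not cover one of the listed groups. Take $G_{16,3}=\langle a,b,c\mid a^4=b^2=c^2=1,\ ab=ba,\ bc=cb,\ cac=ab\rangle$, which is item (7) of Theorem~\ref{main}.
\begin{itemize}
\item It has exponent $4$ and is not Hamiltonian.
\item It is minimal non-abelian. Here $\Phi=\langle a^2,b\rangle$, and its three maximal subgroups $\langle a,b\rangle\cong\mathbb{Z}_4\times\mathbb{Z}_2$, $\langle c,a^2,b\rangle\cong\mathbb{Z}_2^3$ and $\langle ac,a^2\rangle\cong\mathbb{Z}_4\times\mathbb{Z}_2$ are all abelian.
\end{itemize}
So it contains none of your witnesses $D_8$, $\mathbb{Z}_4\rtimes\mathbb{Z}_4$, $Q_8\times\mathbb{Z}_2$. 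Your claim that every listed exponent-$4$ group contains $D_8$ or is $Q_8\times\mathbb{Z}_2^k$ fails for it, and it is not among the exceptions you flag. Your fallback list of minimal non-Hamiltonian groups misses it too: $\langle a,b\mid a^4=b^4=1,[a,b]=a^2\rangle$ is just $\mathbb{Z}_4\rtimes\mathbb{Z}_4$ again, and the entry actually missing is $G_{16,3}$.

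You need one more witness, and a quotient works where no subgroup does. Here $a^2$ is central and $(ac)^2=a^2b\notin\langle a^2\rangle$, so $G_{16,3}/\langle a^2\rangle$ is generated by two involutions whose product has order $4$, i.e.\ it is $D_8$. Hence $G_{16,3}\times\mathbb{Z}_3$ maps onto $D_8\times\mathbb{Z}_3$, which is excluded by row $12$ of Table~\ref{Table23} and Lemma~\ref{sbg-qoutient}. Equivalently, $G_{16,3}\times\mathbb{Z}_3\cong G_{48,21}$ is row $18$; the paper's Case 4 treats it with a two-dimensional representation, obtaining $\sqrt8$ and $\sqrt5$.

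You have also misidentified one table entry. $(\mathbb{Z}_4\rtimes\mathbb{Z}_4)\times\mathbb{Z}_3$ is $G_{48,22}$, row $19$, not $G_{48,11}$ or $G_{48,12}$; neither of those has a central Sylow $3$-subgroup.

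With these corrections the rest closes by containment:
\begin{itemize}
\item the group in item (12) and $G_{32,50}$ contain $D_8$;
\item $G_{32,26}\cong Q_8\times\mathbb{Z}_4$ and $G_{32,35}$ contain $Q_8\times\mathbb{Z}_2$, so row $22$ applies directly and no quotient is needed;
\item $G_{32,23}\cong(\mathbb{Z}_4\rtimes\mathbb{Z}_4)\times\mathbb{Z}_2$ contains $\mathbb{Z}_4\rtimes\mathbb{Z}_4$.
\end{itemize}

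Apart from this, your route differs from the paper's by replacing its explicit representation computations with Table~\ref{Table23} entries plus closure under subgroups and quotients. That is legitimate given the table. Your final step is an improvement on the paper's proof. That proof only examines $P\times\langle x\rangle$ for a single $x\in Q$, so it never rules out $Q_8\times\mathbb{Z}_3^{\beta}$ with $\beta\geq 2$.

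Your idea does rule it out. Let $x,y$ be the generators of $Q_8$ from Lemma~\ref{q8z3}, let $u,v$ be independent generators of $\mathbb{Z}_3^2$, and put $S=\{(x,u)^{\pm1},(y,v)^{\pm1}\}$. Use $\rho_5(g^{-1})=-\rho_5(g)$ for $g$ of order $4$, and $\omega-\omega^2=\sqrt{-3}$ for a primitive cube root of unity $\omega$.
\begin{itemize}
\item If $\psi(u)=\omega$ and $\psi(v)=1$, then $(\rho_5\otimes\psi)(S)=\sqrt{-3}\,\rho_5(x)$, with eigenvalues $\pm\sqrt3$.
\item If $\psi(u)=\psi(v)=\omega$, then $(\rho_5\otimes\psi)(S)=\sqrt{-3}\,(\rho_5(x)+\rho_5(y))$, with eigenvalues $\pm\sqrt6$.
\end{itemize}
So the splitting field contains $\mathbb{Q}(\sqrt2,\sqrt3)$. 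You should write this computation out explicitly rather than leave it as an expectation.
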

	\begin{proof}
		One direction follows from Lemma \ref{q8z3}. Suppose, towards a contradiction, that  $G\ncong \Bbb Z_3\times Q_8$ is a Cayley $2$-integral. Then, by Lemma \ref{sbg-qoutient}, $G=P\times Q$, where $P$ is a group of order $2^\alpha$ given in Theorem \ref{non-ab-2groups} and $Q\cong\Bbb Z_3^\beta$ by Lemma \ref{3,5}, for some positive integers $\alpha,\beta$.		
 Let $1\neq x\in Q$ then $H:=\langle P, x\rangle=P\times\langle x\rangle$ is Cayley $2$-integral, by Lemma \ref{sbg-qoutient}. We know $\chi_j:x^k\mapsto \omega^{jk}$, $j=0,1,2$, where $\omega=e^{\frac{2\pi\textbf{i}}{3}}$ are all inequivalent irreducible representations of $\langle x\rangle$.  By Theorem \ref{non-ab-2groups}, we deal with the following cases:
		
		\textbf{Case 1.} $P\cong L$ or $L\times\Bbb Z_2 $, where $L=\langle a,b\mid a^4=b^2=(ab)^2=1\rangle\cong D_8$.  Then 
		\[\rho:a^k\mapsto \begin{bmatrix}
			\textbf{i}&0\\
			0&-\textbf{i}
	\end{bmatrix},~~ba^k\mapsto\begin{bmatrix}
		0&-\textbf{i}\\
		\textbf{i}&0
		\end{bmatrix},~~k=0,1,2,3\]
		is an irreducible representation of $L$. Put $S=\{b,bax,bax^2\}$. Then the characteristic polynomial of $(\rho\otimes\chi_0)(S)$ and $(\rho\otimes\chi_1)(S)$, are $\lambda^2-5$ and $\lambda^2-2$, respectively. Hence $\sqrt{2}$ and $\sqrt{5}$ are eigenvalues of $\Cay(L\times\langle x\rangle,S)$ which means $H$ is not Cayley $2$-integral, a contradiction.
		
		\textbf{Case 2.}  $P\cong\Bbb Z_8\rtimes\Bbb Z_2$ or $Q_{16}$. Then $P$ has an element of order $8$ and so $H$ has  an element of order $24$, which is impossible by Corollary \ref{order}.
		
			\textbf{Case 3.} $P\cong L\times  K^l$, $l\geq 1$, where $L=\langle a,b\mid a^4=1, a^2=b^2, a^b=a^{-1}\rangle\cong Q_8$ and $K=\langle c\rangle\cong\Bbb Z_2$. Clearly, $\mu_j:c^k\mapsto (-1)^{kj}$, $j,k=0,1$ are irreducible representations of $K$, and by the notations of the proof of Lemma \ref{q8z3}, $\rho_5$ is an irreducible representation of $L$.  Let $S=\{ax,acx,bx,a^{-1}x^{-1},a^{-1}cx^{-1},b^{-1}x^{-1}\}$. Then the characteristic polynomial of $(\rho_5\otimes\mu_0\otimes\chi_2)(S)$ and $(\rho_5\otimes\mu_1\otimes\chi_2)(S)$ are $\lambda^2-15$ and $\lambda^2-3$, respectively. Hence $\sqrt{15}$ and $\sqrt{3}$ are eigenvalues of $\Cay(L\times K\times\langle x\rangle,S)$, which implies that $H$ is not Cayley $2$-integral. 
		
		\textbf{Case 4.} $P=G_{16,3}=\langle a,b\mid a^4=b^2=1, (ab)^4=1, ba^2=a^2b\rangle \cong (\Bbb Z_4\times\Bbb Z_2)\rtimes\Bbb Z_2$. Then 
		\begin{eqnarray*}
			\rho:a\mapsto\begin{bmatrix}
				0&1\\
				1&0
			\end{bmatrix},~~b\mapsto\begin{bmatrix}
				1&0\\
				0&-1
			\end{bmatrix}
		\end{eqnarray*}
		is an irreducible representations of $P$. Let $S=\{a,bx,bx^{-1},a^{-1}\}$. Now the characteristic polynomial of $(\rho\otimes\chi_0)(S)$, and $(\rho\otimes\chi_1)(S)$ are $\lambda^2-8$ and $\lambda^2-5$, respectively. This means $H$ is not Cayley $2$-integral.
		
		\textbf{Case 5.} $P=G_{16,6}=\langle a,b\mid a^8=b^2=1, bab=a^5\rangle\cong \Bbb Z_4\rtimes\Bbb Z_4$. Then 
		\begin{eqnarray*}
			\rho:a\mapsto\begin{bmatrix}
				0&\textbf{i}\\
				1&0
			\end{bmatrix},~~b\mapsto\begin{bmatrix}
			1&0\\
			0&-1
			\end{bmatrix}
		\end{eqnarray*}
		is an irreducible representation of $P$. Let 	$S=\{a,bx,bx^{-1},a^{-1}\}$. Now the characteristic polynomial of $(\rho\otimes\chi_0)(S)$, and $(\rho\otimes\chi_1)(S)$ are $\lambda^2-6$ and $\lambda^2-3$, respectively. This means that $H$ is not Cayley $2$-integral.

		\textbf{Case 6.} $P\cong G_{32,50},G_{32,35}$ or  $G_{32,26}$. Then $P$ has a subgroup isomorphic to $Q_8\times\Bbb Z_2$. Hence  $H$ is not Cayley $2$-integral by Case $3$.
		
		\textbf{Case 7.} $P\cong G_{32,23}$. Then $P$ has a subgroup isomorphic to $\Bbb Z_4\rtimes\Bbb Z_4$ and so $H$ is not Cayley $2$-integral by Case 5.
	\end{proof}
	
	The following result of H. Fitting is well-known \cite[Theorem 4.34]{Issacs}, but we recall it for convenience of the reader.
	\begin{lem}(Fitting's Lemma)\label{fit}
		Let the group $A$ acts via automorphisms on an abelian group $G$, and assume that $A$ and $G$ are finite and that $(|A|,|G|)=1$. Then $G=C_G(A)\times [G,A]$. 
	\end{lem}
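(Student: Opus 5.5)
The plan is the standard proof of Fitting's Lemma (Isaacs, Theorem 4.34). Write $G$ additively as an abelian group with $A$ acting by automorphisms, and let $|A|=n$, which is coprime to $|G|$. Both $C_G(A)=\{g\in G\mid g^a=g \text{ for all } a\in A\}$ and $[G,A]=\langle -g+g^a\mid g\in G, a\in A\rangle$ are $A$-invariant subgroups. Since $G$ is abelian, it suffices to prove two things: $C_G(A)\cap[G,A]=0$ and $G=C_G(A)+[G,A]$.

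The main tool is the averaging map. Because $\gcd(n,|G|)=1$, multiplication by $n$ is an automorphism of $G$, so $\frac1n$ makes sense on $G$. Define
\[
\pi(g)=\frac1n\sum_{a\in A} g^a .
\]
This is an endomorphism of $G$ because $G$ is abelian and each $a$ acts additively. Its key properties are:
\begin{itemize}
\item[(i)] $\pi(g)\in C_G(A)$, since right-multiplying by $b\in A$ merely permutes the summands;
\item[(ii)] $\pi(g)=g$ for $g\in C_G(A)$;
\item[(iii)] $\pi(-g+g^b)=0$, since $\sum_a g^{ba}=\sum_a g^a$.
\end{itemize}
By (iii) and additivity, $\pi$ vanishes on $[G,A]$. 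Together with (ii), any $g\in C_G(A)\cap[G,A]$ satisfies $g=\pi(g)=0$, which gives the trivial intersection.

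For generation, note that
\[
g-\pi(g)=\frac1n\sum_{a\in A}(g-g^a)=-\frac1n\sum_{a\in A}(-g+g^a).
\]
We need this element to lie in $[G,A]$. Each $-g+g^a$ lies in $[G,A]$, and $[G,A]$ is a subgroup of the finite group $G$, whose order is coprime to $n$. Hence multiplication by $n$ is a bijection on $[G,A]$, and so is multiplication by $1/n$. Therefore $g-\pi(g)\in[G,A]$, and $g=\pi(g)+(g-\pi(g))\in C_G(A)+[G,A]$.

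The only delicate point is that division by $n$ must be carried out inside $[G,A]$ and not merely inside $G$. This is exactly what the coprimality hypothesis $(|A|,|G|)=1$ guarantees. Combining the two parts, $G=C_G(A)\oplus[G,A]$, which written multiplicatively is $G=C_G(A)\times[G,A]$.
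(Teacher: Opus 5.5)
Your proof is correct: the averaging map $\pi(g)=\frac{1}{n}\sum_{a\in A}g^a$ is an idempotent endomorphism of $G$ with image $C_G(A)$ and kernel $[G,A]$, which gives exactly $G=C_G(A)\times[G,A]$. The point you call delicate is in fact automatic, since $\frac{1}{n}$ is multiplication by an integer $m$ with $mn\equiv 1 \pmod{|G|}$ and therefore preserves every subgroup. The paper does not prove this lemma itself; it quotes \cite[Theorem 4.34]{Issacs}, so your argument supplies a self-contained proof of that cited result.
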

	\begin{lem}\label{main3k2}
		Let $G$ be a finite non-abelian group of order $2p^k$, $k\geq 2$, $p\neq 2$ be a prime, and $P\cong\Bbb Z_p^k$ be a Sylow $p$-subgroup of $G$. Put
		\begin{itemize}
			\item[(a)] $H_1:=\langle a,b,c\mid a^2=b^p=c^p=(ac)^2=[a,b]=[b,c]=1\rangle\cong D_{2p}\times\Bbb Z_p$,
			\item[(b)] $H_2:=\langle a,b,c\mid a^2=b^p=c^p=(ab)^2=(ac)^2=[b,c]=1\rangle\cong\Bbb Z_p^2\rtimes\Bbb Z_2$,
			\item[(c)] $H_3:=\langle a,b,c,d\mid a^2=b^p=c^p=d^p=(ab)^2=(ac)^2=(ad)^2=[b,c]=[b,d]=[c,d]=1\rangle\cong \Bbb Z_p^3\rtimes\Bbb Z_2$.
		\end{itemize} 
			If $k=2$ then either $G\cong H_1$ or $H_2$, and otherwise, either $H_1\leq G$ or $H_3\leq G$. Furthermore, $H_1$ and $H_3$ are not Cayley $2$-integral. In particular, if $G$ is Cayley $2$-integral, then $p=3$ and $G\cong H_2$.
	\end{lem}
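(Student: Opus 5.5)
The plan is to treat the structure statement and the non-$2$-integrality statement separately, then combine them.

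\emph{Structure.} Write $G=P\rtimes\langle a\rangle$ with $a$ an involution; conjugation by $a$ is an automorphism of order dividing $2$ of the $\Bbb F_p$-space $P\cong\Bbb Z_p^k$. Apply Lemma \ref{fit} with $A=\langle a\rangle$ (coprime since $p$ is odd). This gives $P=C_P(a)\times[P,a]$, where $a$ centralizes the first factor. Since $a$ is an involution acting on $[P,a]$ with $C_{[P,a]}(a)=1$, it inverts every element of $[P,a]$. Put $c_0:=\dim C_P(a)$ and $i_0:=\dim[P,a]$. Then $i_0\geq1$ because $G$ is non-abelian, and $c_0+i_0=k$.

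For $k=2$ there are two cases. If $(c_0,i_0)=(1,1)$, choose $b\in C_P(a)$ and $c\in[P,a]$ to get the presentation of $H_1$. If $(c_0,i_0)=(0,2)$, $a$ inverts all of $P$, which gives $H_2$.

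For $k\geq3$, again split into cases. If $c_0\geq1$, pick $1\neq b\in C_P(a)$ and $1\neq c\in[P,a]$; then $\langle a,b,c\rangle\cong H_1$. If $c_0=0$, then $i_0=k\geq3$, and any three independent $b,c,d\in P$ together with $a$ generate $H_3$.

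\emph{Non-integrality.} I would exhibit explicit Cayley sets, in the spirit of Table \ref{Table23}, rows (9) and (30). The key tool is Remark \ref{rep} with the $2$-dimensional representations of $D_{2p}$ tensored with characters of $\Bbb Z_p$.

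For $H_1\cong D_{2p}\times\Bbb Z_p$, take $S=\{a,\,(cb)^{\pm1},\,(acb)^{\pm1}\}$, or a similar small set. The degree-$2$ representation $\rho_j\otimes\psi_l$ has $\rho_j(c)=\mathrm{diag}(\zeta^j,\zeta^{-j})$ and $\rho_j(a)$ equal to the swap matrix, with $\psi_l(b)=\zeta^{l}$ and $\zeta=e^{2\pi\mathbf i/p}$. Then $\rho(S)$ is a $2\times2$ matrix whose eigenvalues involve quantities like $\sqrt{(\zeta^{j+l}+\dots)(\dots)}$ lying in $\Bbb Q(\zeta)^+$. Varying $j,l$ should produce eigenvalues generating a field of degree greater than $2$. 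For $p\geq5$, this follows already because $\Bbb Z_p$ with $p\ge7$ fails Corollary \ref{order}. For $p=5$ and $p=3$, the explicit computations in Table \ref{Table23} (rows 9 and 30-type) supply the witnesses.

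For $H_3$, by Lemma \ref{sbg-qoutient} it suffices to note that $H_3$ contains $H_2$ and then show $H_3$ itself fails. I would use $S=\{a,ab,ac,ad\}$, generalizing row (7)/(30) $\{a,ab,ac\}$. This is the main obstacle: computing the spectrum via the $2$-dimensional representations $\rho_\chi$ induced from the linear characters $\chi$ of $P$. The eigenvalues are $\pm|1+\chi(b)+\chi(c)+\chi(d)|$, and I would show the field generated by these absolute values for all choices of $\chi$ has degree greater than $2$. For $p=3$, the values $|1+\omega^{e_1}+\omega^{e_2}+\omega^{e_3}|$ include $\sqrt3$, $2$, $\sqrt{7}$ and others, so the field contains $\Bbb Q(\sqrt3,\sqrt7)$, which has degree $4$. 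For $p=5$, the value $p=5$ already fails for $H_1$-type reasons, or a similar computation applies.

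\emph{Conclusion.} If $G$ is Cayley $2$-integral, Lemma \ref{sbg-qoutient} rules out $H_1\leq G$ and $H_3\leq G$. So $k=2$ and $G\cong H_2$. By Corollary \ref{key} only $p\in\{3,5\}$ is possible, and Table \ref{Table23}(30) (the group $G_{50,4}$) excludes $p=5$. Hence $p=3$.
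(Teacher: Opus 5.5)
Your structural half (Fitting's Lemma for $\langle a\rangle$ acting on $P$, then the case split on $\dim C_P(a)$) is correct and is the paper's argument. Disposing of $p\ge 7$ via Corollary~\ref{order} is a legitimate shortcut.

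\textbf{The witness for $H_3$ at $p=3$ is wrong.} This is the one case that cannot be inherited from a subgroup, since there $H_2\cong G_{18,4}$ is Cayley $2$-integral. Your set $S=\{a,ab,ac,ad\}$ does not witness it. For non-trivial $\chi$, let $n_0,n_1,n_2$ count the terms of $\sigma=1+\chi(b)+\chi(c)+\chi(d)$ equal to $1,\omega,\omega^2$. Then $|\sigma|^2=\tfrac12[(n_0-n_1)^2+(n_1-n_2)^2+(n_2-n_0)^2]\in\{7,4,1\}$, because $n_0+n_1+n_2=4$. So the whole spectrum lies in $\Bbb Q(\sqrt7)$ and $\deg\Cay(H_3,S)=2$; $\sqrt3$ never occurs.

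In fact, at $p=3$ no four elements of the coset $Pa$ can work. You need five, so that the pattern $(4,1,0)$ (value $13$) can appear alongside $(3,2,0)$ (value $7$). That requires four elements of $X$ in one coset of $\ker\chi$, which is impossible in $\Bbb Z_3^2$; this is exactly where rank $3$ enters. The paper takes $S=\{a,b^{-1}a,c^{-1}a,b^{-1}cd^{-1}a,c^{-1}da\}$ with $(\chi(b),\chi(c),\chi(d))=(\alpha,\alpha,1)$ and $(1,\alpha,\alpha)$, where $\alpha$ is a primitive $p$-th root of unity. This gives $\pm\sqrt{13+12\cos(2\pi/p)}$ and $\pm\sqrt{17+8\cos(2\pi/p)}$, i.e.\ $\sqrt7$ and $\sqrt{13}$ at $p=3$. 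The set $\{a,ab,ab^{-1},ac,ad\}$ with characters $(1,\omega,1)$ and $(1,\omega,\omega)$ also works, giving $|4+\omega|=\sqrt{13}$ and $|3+2\omega|=\sqrt7$.

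For $H_3$ at $p=5$, your appeal to ``$H_1$-type reasons'' is wrong. Every involution of $H_3$ inverts $P$, so $H_1\not\le H_3$. The correct argument is $G_{50,4}\cong H_2\le H_3$ together with row~(30).

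\textbf{$H_1$ at $p=5$ is not covered by the rows you cite.} Row~(9) is $H_1$ at $p=3$, and your $S$ is essentially that row's set. But rows~(7) and~(30) are $G_{50,4}\cong H_2$, which is neither a subgroup nor a quotient of $H_1\cong D_{10}\times\Bbb Z_5$ (same order, non-isomorphic). Moreover, $D_{10}\times\Bbb Z_5$ does not appear in Table~\ref{Table23}. Since this group is a candidate for $G$ when $k=2$, you need an explicit computation.

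Your own set suffices. For the representation with $b\mapsto I$ and $c\mapsto\mathrm{diag}(\zeta,\zeta^{-1})$,
\[
\rho(S)=\begin{bmatrix} t & 1+2\zeta^{-1}\\ 1+2\zeta & t\end{bmatrix},\qquad t=\zeta+\zeta^{-1},
\]
with eigenvalues $t\pm\sqrt{5+2t}$. At $p=5$ you get $5+2t=4+\sqrt5$, whose norm $11$ is not a rational square. Hence $\sqrt{4+\sqrt5}\notin\Bbb Q(\sqrt5)$ and the degree is at least $4$. The paper instead uses $S=\{a,(ab)^{\pm1},ac,(bc)^{\pm1}\}$ with two representations that work uniformly in $p$.

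With these two repairs, the rest of your argument, including the final use of row~(30), matches the paper.
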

	\begin{proof}	 
	Let $P$ be a Sylow $p$-subgroup of $G$. As $|G:P|=2$, $P$ is normal in $G$ and by Lemma \ref{3,5} $P$ must be abelian. Now by using Fitting's Lemma we get that $G= P_1\times (P_2\rtimes Q)$ where $Q\cong \Bbb Z_2$, $P_1=C_P(Q)$  and $P_2=[P, Q]$. So if $k=2$ we have that either $P_1=1$ or $P_1\cong \Bbb Z_3$, implying that $G\cong H_1$ or $G\cong H_2$. Moreover, if $k>2$, either $H_1\leq G$ or $H_3\leq G$.

%$\langle x_1,yx_1\rangle=\langle x_1,(yx_1)^p,(yx_1)^2\rangle=\langle  x_1,(yx_1)^p\rangle\times\langle (yx_1)^2\rangle \cong D_{2p}\times\Bbb Z_p$. Hence, we may assume that $(yx_1)^2=1$ and so $\langle x_1,y\rangle\cong D_{2p}$. If $[x_i,y]=1$ for some $2\leq i\leq k$, then $\langle x_1,y,x_i\rangle=\langle x_1,y\rangle\times\langle x_i\rangle\cong D_{2p}\times\Bbb Z_p$, which means that we may assume that for all $1\leq i\leq k$, $[x_i,y]\neq 1$. Furthermore, by the above argument, we may assume that $(yx_i)^2=1$, for all $1\leq i\leq k$. If $k=2$ then $G\cong H_2$ and if $k\geq 3$ then
%$\langle y,x_1,x_2,x_3\rangle\cong H_3$ is a subgroup of $G$. This proves the first part.

 By considering the irreducible representations of $D_{2p}$ and $\Bbb Z_p$, it is easy to see that the following maps are two inequivalent irreducible representations of $H_1$:
\begin{eqnarray*}
	&&\phi: a\mapsto \begin{bmatrix}
		0&1\\
		1&0
	\end{bmatrix},~~b\mapsto \begin{bmatrix}
		1&0\\
		0&1
	\end{bmatrix},~~c\mapsto\begin{bmatrix}
		\omega&0\\
		0&\omega^{-1}
	\end{bmatrix},\\
	&&\psi: a\mapsto \begin{bmatrix}
		0&1\\
		1&0
	\end{bmatrix},~~b\mapsto \begin{bmatrix}
		\omega&0\\
		0&\omega
	\end{bmatrix},~~c\mapsto\begin{bmatrix}
		\omega&0\\
		0&\omega^{-1}
	\end{bmatrix},
\end{eqnarray*}
where $\omega=e^{\frac{2\pi\textbf{i}}{p}}$. Let $S=\{ a, ab, ac, bc, ab^{-1}, b^{-1}c^{-1} \}$. Then the eigenvalues of $\phi(S)$ and $\psi(S)$ are $t\pm\sqrt{10+3t}$ and $\frac{t^2\pm\sqrt{t^4+12t+24}}{2}$, respectively, where $t=2\cos(2\pi/p)$. This proves that $H_1$ is not Cayley $2$-integral.

Now we consider the group $H_3$. Then
%\[H_3=\langle a,b,c,d\mid a^2=b^3=c^3=d^3=(ab)^2=(ac)^2=(ad)^2=[b,c]=[b,d]=[c,d]=1\rangle.\]
%Then
\begin{eqnarray*}
	\rho_1: b^{n_1}c^{n_2}d^{n_3}a^m\mapsto \begin{bmatrix}
		\alpha^{n_1+n_2}&0\\
		0&\alpha^{-(n_1+n_2)}
	\end{bmatrix}\begin{bmatrix}
		0&1\\
		1&0
	\end{bmatrix}^m
\end{eqnarray*}
and 
\begin{eqnarray*}
	\rho_2: b^{n_1}c^{n_2}d^{n_3}a^m\mapsto \begin{bmatrix}
		\alpha^{n_2+n_3}&0\\
		0&\alpha^{-(n_2+n_3)}
	\end{bmatrix}\begin{bmatrix}
		0&1\\
		1&0
	\end{bmatrix}^m,
\end{eqnarray*}
where $0\leq n_1,n_2,n_3\leq p-1$, $0\leq m\leq 1$ and $\alpha=e^{\frac{2\pi\textbf{i}}{p}}$, are irreducible representations of $H_3$. Put $S=\{a,b^{-1}a,c^{-1}a,b^{-1}cd^{-1}a,c^{-1}da\}$. Then the eigenvalues of  $\rho_1(S)$ and $\rho_2(S)$ are $\pm\sqrt{13+12\cos(2\pi/p)}$ and $\pm\sqrt{17+8\cos(2\pi/p)}$, respectively. This means that $H_3$ is not Cayley $2$-integral.

Now suppose that $G$ is Cayley $2$-integral. Then $p\in\{3,5\}$ and $G\cong H_2$. By Table \ref{Table23}(30), $p=5$ is impossible. This completes the proof.
	\end{proof}

	\begin{core}\label{3k2}
		Let $G$ be a finite non-abelian  group of order $2p^k$, $k\geq 2$ and $p>2$ a prime. Then $G$ is Cayley $2$-integral if and only if $G=\langle a,b,c\mid a^2=b^3=c^3=(ab)^2=(ac)^2=[b,c]=1\rangle\cong (\Bbb Z_3\times\Bbb Z_3)\rtimes\Bbb Z_2$.
	\end{core}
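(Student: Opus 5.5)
The corollary follows almost immediately from Lemma \ref{main3k2}, combined with the positive result for $G_{18,4}$ from Lemma \ref{remains}(8). We prove the two directions separately.

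\emph{Sufficiency.} The group $\langle a,b,c\mid a^2=b^3=c^3=(ab)^2=(ac)^2=[b,c]=1\rangle$ is exactly $G_{18,4}$, listed as item (8) of Lemma \ref{remains}. So it is Cayley $2$-integral by Corollary \ref{coremain}.

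\emph{Necessity.} Let $G$ be non-abelian of order $2p^k$ with $k\geq 2$, and assume $G$ is Cayley $2$-integral.

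First, by Corollary \ref{2,3,5}, $p\in\{3,5\}$. By Lemma \ref{sbg-qoutient}, the Sylow $p$-subgroup $P$ is also Cayley $2$-integral, so Lemma \ref{elab} gives $P\cong\Bbb Z_p^k$. The hypotheses of Lemma \ref{main3k2} are therefore satisfied.

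Next, apply Lemma \ref{main3k2}. If $k\geq 3$, then $G$ contains a copy of $H_1$ or $H_3$. Neither of these is Cayley $2$-integral, so by subgroup-closure (Lemma \ref{sbg-qoutient}) $G$ is not Cayley $2$-integral either, a contradiction. Hence $k=2$ and $G\cong H_1$ or $H_2$. Since $H_1$ is not Cayley $2$-integral, $G\cong H_2$. Finally, $p=5$ is excluded by Table \ref{Table23}(30), which exhibits a Cayley graph of $G_{50,4}\cong H_2$ of algebraic degree $8$. Thus $p=3$, and $G\cong H_2$ is the group in the statement.

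The only substantive content lies in Lemma \ref{main3k2}, which is already proved. The one point to check is that the eigenvalue computations for $H_1$ and $H_3$ given there really produce degree greater than $2$ for both $p=3$ and $p=5$.

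For $H_1$ with $t=2\cos(2\pi/p)$: when $p=3$ we have $t=-1$, and $\phi(S)$ has eigenvalues $-1\pm\sqrt7$ while $\psi(S)$ has eigenvalues $(1\pm\sqrt{13})/2$. These two quadratic fields are different, so the degree exceeds $2$. When $p=5$, $t$ is itself irrational, so the splitting field already contains $\Bbb Q(\sqrt5)$ together with further square roots.

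For $H_3$ with $p=3$, the eigenvalues are $\pm\sqrt7$ and $\pm\sqrt{13}$. These generate distinct quadratic fields, so the degree again exceeds $2$; the case $p=5$ is similar.
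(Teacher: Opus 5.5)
Your proof is correct and follows the paper's own argument: necessity comes from Lemma \ref{main3k2}, with Table \ref{Table23}(30) ruling out $p=5$, and sufficiency comes from Lemma \ref{remains}(8). Two of your additions are accurate. The first is the explicit check, via Corollary \ref{2,3,5} and Lemma \ref{elab}, that the Sylow $p$-subgroup is elementary abelian; Lemma \ref{main3k2} needs this as a hypothesis, and the paper's one-line proof leaves it implicit. The second is the eigenvalue check for $p=3$ ($-1\pm\sqrt7$ and $(1\pm\sqrt{13})/2$ for $H_1$; $\pm\sqrt7$ and $\pm\sqrt{13}$ for $H_3$).
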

	\begin{proof}
One way of the proof is done by Lemma \ref{main3k2}. The converse direction follows from Lemma \ref{remains}(8). 
		\end{proof}

\begin{lem}\label{3k-14}
	Let $G$ be a finite non-abelian group of order $3^k\times 4$, $k\geq 3$. Suppose further that every element of $G$ has order dividing $12$ and every proper normal subgroup of $G$ is abelian. Then $G$
	has no normal subgroup $N$ isomorphic to $\Bbb Z_3^{k-1}\times\Bbb Z_4$.
\end{lem}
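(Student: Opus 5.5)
The plan is to argue by contradiction. We assume that $N\unlhd G$ with $N\cong\Bbb Z_3^{k-1}\times\Bbb Z_4$, and show that $G$ is then the direct product of its Sylow $3$-subgroup with a central cyclic Sylow $2$-subgroup. This will force $G$ to be abelian.

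First, $|G:N|=3$. Let $T$ be the Sylow $2$-subgroup of $N$, so $T\cong\Bbb Z_4$. Since $N$ is abelian, $T$ is its unique Sylow $2$-subgroup, hence characteristic in $N$ and therefore normal in $G$. Because $|G|_2=4=|T|$, $T$ is in fact the (unique) Sylow $2$-subgroup of $G$.

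Next we show that $T$ is central. Conjugation gives a homomorphism $G\to\Aut(T)\cong\Bbb Z_2$ whose kernel is $C_G(T)$. Since $N$ is abelian, $N\le C_G(T)$, so $G/C_G(T)$ is a quotient of $G/N\cong\Bbb Z_3$. A group that is a quotient of $\Bbb Z_3$ and embeds in $\Bbb Z_2$ is trivial, so $T\le Z(G)$.

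Now let $P$ be a Sylow $3$-subgroup of $G$. The normal Sylow $2$-subgroup $T$ is central, so Burnside's normal complement theorem (or simply the fact that $G/T$ is a $3$-group and $T$ is central) yields $G=P\times T$. In particular $P\unlhd G$, and $P$ is proper because $T\neq1$. By hypothesis every proper normal subgroup of $G$ is abelian, so $P$ is abelian. Hence $G=P\times T$ is abelian, contradicting the assumption that $G$ is non-abelian.

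There is no real obstacle here; the only point needing care is that $T$ is normal in $G$ (via being characteristic in the abelian normal subgroup $N$) and then central (via the order clash between $G/N$ and $\Aut(\Bbb Z_4)$). The exponent hypothesis and $k\ge3$ are not even needed for this argument, but they are harmless.
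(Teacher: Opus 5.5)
Your proof is correct and is essentially the paper's argument. The $\Bbb Z_4$ factor is characteristic in $N$, hence a normal Sylow $2$-subgroup of $G$; it is centralized by $N$, and $|G:N|=3$ is coprime to $|\Aut(\Bbb Z_4)|=2$, so it is central and $G=P\times\Bbb Z_4$. You also state explicitly a step the paper leaves implicit: $P$ is abelian because it is a proper normal subgroup.
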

\begin{proof}
	Suppose, towards a contradiction, that such a normal subgroup $N$ of $G$ exists. Then $N=H\times K$, where $H\cong\Bbb Z_3^{k-1}$ and $K\cong\Bbb Z_4$. Since $(|H|,|K|)=1$, $H$ and $K$ are both characteristic subgroups of $N$ and so both are normal subgroups of $G$. Indeed $K$ is a Sylow $2$-subgroup of $G$. So $G=PK$, where $P$ is a Sylow $3$-subgroup of $G$. Since $\Aut(K)\cong\Bbb Z_2$, we have $|G:C|=1$ or $2$, where $C=C_G(K)$. On the other hand, $N\leq C$ and $|G:N|=3$. This implies that $G=C$, which means $K\leq Z(G)$. Thus $P\unlhd G$, and so $G=P\times K$ is abelian, a contradiction.
\end{proof}
\begin{lem}\label{3k-122}
	Let $G$ be a finite non-abelian group of order $3^k\times 4$, $k\geq 2$. Suppose further that every element of $G$ has order dividing $12$ and every proper normal subgroup of $G$ is abelian. Then $G$
	has no normal subgroup $N$ isomorphic to $\Bbb Z_3^{k-1}\times\Bbb Z_2^2$.
\end{lem}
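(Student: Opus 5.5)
The plan is to argue by contradiction: assume such an $N\unlhd G$ exists and produce a proper normal subgroup of $G$ that is non-abelian.

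First I would split $N$ into its primary parts and pin down how a $3$-element acts on the $2$-part. Write $N=H\times K$ with $H\cong\Bbb Z_3^{k-1}$ and $K\cong\Bbb Z_2^2$. Since $(|H|,|K|)=1$, both are characteristic in $N$, hence normal in $G$, as in Lemma \ref{3k-14}. Then $K$ is the (normal) Sylow $2$-subgroup of $G$, and $|G:N|=3$. Choose a $3$-element $x\in G\setminus N$. By the exponent hypothesis $x^3=1$, and $P:=H\langle x\rangle$ is a Sylow $3$-subgroup, so $G=HK\langle x\rangle$.

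Next I would show that $x$ acts nontrivially on $K$. Suppose $x$ centralizes $K$. Since $N$ is abelian, $N\le C_G(K)$, so $K\le Z(G)$. Then $G=P\times K$ with $P\unlhd G$ proper, so $P$ is abelian by hypothesis and $G$ is abelian, a contradiction. Hence $x$ induces an automorphism of order $3$ on $K$, and $K\langle x\rangle\cong A_4$ is non-abelian.

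The key step is to trap $K\langle x\rangle$ inside a proper normal subgroup. Put $M:=K\,[H,x]\,\langle x\rangle$. I would first check that $[H,x]$ is a normal subgroup of $G$:
\begin{itemize}
\item $[H,x]\le H$ and $H$ is abelian, so $H$ normalizes it;
\item $K$ centralizes $H$, so $K$ normalizes it;
\item $[h,x]^x=[h^x,x]$, so $x$ normalizes it.
\end{itemize}
Since $K\unlhd G$ as well, $M$ is a subgroup. To see that $M\unlhd G$, it suffices that the conjugates of its generators stay in $M$. Indeed $x^h=x[x,h]\in x[H,x]$ for $h\in H$, $x^c\in xK$ for $c\in K$, and $K$ and $[H,x]$ are already normal.

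It remains to show $M$ is proper, which is where $k\ge2$ is used. Here $H\neq1$ is a normal subgroup of the $3$-group $P$, so $[H,P]<H$; in particular $[H,x]$ is a proper subgroup of $H$. Therefore $|M|\le 4\cdot 3\cdot|[H,x]|<4\cdot3\cdot|H|=|G|$. Thus $M$ is a proper normal subgroup of $G$ containing the non-abelian group $K\langle x\rangle$, contradicting the hypothesis that every proper normal subgroup is abelian.

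The main obstacle is finding this normal subgroup. Naive candidates such as $P$ or $K\langle x\rangle$ need not be normal, and the normal closure of $x$ must be shown to be proper. Once one notes that $[H,x]$ is $x$-invariant and strictly smaller than $H$, the rest is routine.
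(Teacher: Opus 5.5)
Your proof is correct, and it takes a different route from the paper's.

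The paper passes to $G/K$ and asserts that it is elementary abelian of order $3^k$, citing only the exponent hypothesis. It then applies Fitting's Lemma to the action of a Sylow $3$-subgroup $P$ on $K$. This gives either that $G$ is abelian or that $G\cong H\times(K\rtimes\Bbb Z_3)\cong H\times A_4$. The second case is dismissed via Table \ref{Table23}(3), i.e.\ because $A_4\times\Bbb Z_3$ is not Cayley $2$-integral.

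You never need $P$ to be abelian. You build the normal subgroup $M=K[H,x]\langle x\rangle$ directly. To see that $M$ is proper, you use only that $[H,x]\le[H,P]<H$ for the nontrivial normal subgroup $H$ of the $3$-group $P$. The contradiction then comes purely from the hypothesis that proper normal subgroups are abelian.

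This buys you two things.
\begin{enumerate}
\item Exponent $3$ alone does not force a $3$-group to be abelian; $G_{27,3}$ has exponent $3$ and is not abelian. So the paper's step $G/K\cong\Bbb Z_3^k$ is really justified only in the context where the lemma is applied, where Lemma \ref{elab} makes $P$ elementary abelian. Your argument also covers the case where $P$ is non-abelian, so it proves the lemma exactly as stated.
\item You make no appeal to Cayley $2$-integrality, which is not among the lemma's hypotheses. The paper's last step could also be repaired without it: since $k\ge 2$, the factor $A_4$ is a proper non-abelian normal subgroup.
\end{enumerate}

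What the paper's route offers in its intended setting is a clean structural conclusion, $G\cong\Bbb Z_3^{k-1}\times A_4$. When $P$ is abelian, $[H,x]=1$ and your $M$ is exactly that $A_4$ direct factor. So your argument can be read as the general version of the paper's.
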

\begin{proof}
%	Suppose, towards a contradiction, that such a normal subgroup $N$ of $G$ exists. Then $N=H\times K$, where $H\cong\Bbb Z_3^{k-1}$ and $K\cong\Bbb Z_2^2$. Clearly, $K$ is the only Sylow $2$-subgroup of $G$. Also there exists a Sylow $3$-subgroup $P$ of $G$ such that $H\leq P$. Furthermore, $H\unlhd P$ and $N\cap P=H$. Thus $|NP|=3^{k}\times 4=|G|$, which means $G=NP$. Since $N$ and $P$ normalize $H$, we have $H\unlhd G$. Thus $G/H$ is a group of order $12$ having $N/H\cong \Bbb Z_2^2$ as a normal subgroup.
%	We claim that $G/H$ is non-abelian. To see this, let $G/H$ be abelian. Then $G/H\cong A\times B$, where $A=\langle aH\rangle\cong \Bbb Z_3$ for some $a\in G\setminus H$ and
%	$B\cong \Bbb Z_2^2$. Let $x\in K$, then $ax=xa$, because $G'\leq H$ and $K\unlhd G$ which imply $axa^{-1}x^{-1}\in H\cap K=1$. This means that $K$ centralizes $L:=\langle H,a\rangle$. Since $L=H\langle a\rangle=H\cup Ha\cup Ha^2$, we have $|L|=3^k$ and so $G=LK=L\times K$ is abelian, since $L$ is abelian by our assumption. This is a contradiction. Thus our claim is true and $G/H$ is a non-abelian group of order $12$ having a normal subgroup isomorphic to $\Bbb Z_2^2$. Thus $G/H\cong A_4$.
Suppose, towards a contradiction, that such a normal subgroup $N$ of $G$ exists. Then $N=H\times K$, where $H\cong\Bbb Z_3^{k-1}$ and $K\cong\Bbb Z_2^2$. Since $|H|$ and $|K|$ are coprime, $H$ and $K$ are characteristic subgroups of $N$, which means $H\unlhd G$ and $K\unlhd G$. Then $G/K$ is a group of order $3^k$. Since every element of $G$ has order dividing $12$, we conclude that $G/K$ is an elementary abelian $3$-group, i.e $G/K\cong\Bbb Z_3^k$. Thus  $P$, a Sylow $3$-subgroup of $G$ acts on $K$ and so by Fitting's Lemma  either $G\cong H\times (K\rtimes \Bbb Z_3)\cong H\times A_4$ which is impossible by Table \ref{Table23}(3), or $G$ is abelian, a contradiction.  
\end{proof}

% On the other hand, by the Schur-Zassenhaus theorem, there exist a subgroup $L$ of $G$ such that $G=KL$ and $K\cap L=1$. Thus $L\cong G/K\cong\Bbb Z_3^k$. Since $L$ acts on $K$, by conjugation, there exists a group homomorphism $\varphi:L\rightarrow \Aut(K)$ such that $\ker\varphi=L\cap C_G(K)$. If $\ker\varphi=L$ then $G=K\times L$ is abelian, which is a contradiction. Hence $M:=\ker\varphi$ is a proper subgroup of $L$, since $k\neq 1$. On the other hand, $L/M$ is a $3$-group and isomorphic to a subgroup of $\Aut(K)\cong S_3$. Thus $L/M\cong \Bbb Z_3$. Since $L$ is an elementary abelian $3$-group, there exists a subgroup $M'$ of $L$ such that $L=M'\times M$. Then $G=K(M'\times M)=KM'\times M$, because $K\cap L=1$, $L=M'M$ is abelian and $M$ centralizes $K$. Now $K\cap M'=1$ implies that $KM'$ is a group of order $12$. Since $\Bbb Z_2^2\cong K\unlhd KM'$, we conclude that $KM'\cong A_4$. Thus $G$ has a non-abelian normal subgroup isomorphic to $A_4$, a contradiction.

\begin{lem}\label{conj}
	Let $G$ be a non-abelian group of order $3^k\times 4$, where $k\geq 2$. Suppose further that every proper normal subgroup of $G$ is abelian and each element of $G$ has order dividing $12$. If $G$ has a normal subgroup isomorphic to 
	$\Bbb Z_3^k\times\Bbb Z_2$ then 
	\[G=\langle a_1,\ldots,a_k,b\mid a_i^3=b^4=[a_i,a_j]=ba_ib^{-1}a_i=1, i,j=1,\ldots,k\rangle\cong\Bbb Z_3^k\rtimes\Bbb Z_4.\]
\end{lem}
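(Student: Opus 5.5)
Let $N\unlhd G$ with $N\cong\Bbb Z_3^k\times\Bbb Z_2$, and write $N=P\times\langle z\rangle$ with $P\cong\Bbb Z_3^k$ and $z$ an involution. Both factors are characteristic in $N$, hence normal in $G$. Since $|G|=3^k\cdot 4$, $P$ is the unique Sylow $3$-subgroup of $G$. The subgroup $\langle z\rangle$ is normal of order $2$, so $z\in Z(G)$.

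First I will pin down the Sylow $2$-subgroup $T$ of $G$, which has order $4$ and contains $z$. We have $G=P\rtimes T$, and $T$ acts on $P$ by conjugation. Write $K=C_T(P)$. If $K=T$, then $G=P\times T$ is abelian, a contradiction; so $|T/K|\in\{2,4\}$. Suppose $T\cong\Bbb Z_2^2$. Then $T=\langle z\rangle\times\langle t\rangle$, and $z$ centralizes $P$, so the action of $T$ on $P$ is the action of the involution $t$. By Fitting's Lemma, $P=C_P(t)\times[P,t]$, and $t$ inverts $[P,t]$. Then $G=C_P(t)\times\langle z\rangle\times([P,t]\rtimes\langle t\rangle)$, and $[P,t]\rtimes\langle t\rangle$ is a generalized dihedral group. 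If $[P,t]\ne P$, then the subgroup $C_P(t)[P,t]\langle t\rangle$, which has index $2$, is a proper normal non-abelian subgroup, contrary to the hypothesis. If $[P,t]=P$, then $P\langle t\rangle\cong\Bbb Z_3^k\rtimes\Bbb Z_2$ is a proper normal subgroup of index $2$ and is non-abelian, again a contradiction. Hence $T\cong\Bbb Z_4=\langle b\rangle$ with $b^2=z$. Indeed $z$ is the unique involution of $T$, because $z\in T$ (all $2$-elements of the normal subgroup $\langle z\rangle$ lie in every Sylow $2$-subgroup).

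Next, since $b^2=z$ centralizes $P$, the element $b$ acts on $P$ as an automorphism of order $2$ (order $1$ is impossible because $G$ is non-abelian). Again by Fitting's Lemma, $P=C_P(b)\times[P,b]$, where $b$ inverts $[P,b]$ and $[P,b]\ne1$. The main step is to show $C_P(b)=1$. Suppose instead that $C:=C_P(b)\ne1$. Then $M:=[P,b]\langle b\rangle$ is a subgroup of order $3^{j}\cdot4$ with $j<k$. Moreover $M$ is normal in $G$, since $C$ centralizes both $[P,b]$ and $b$, so that $G=C\times M$. But $M$ is non-abelian because $b$ inverts $[P,b]\ne1$, and $M$ is proper, contradicting the assumption that every proper normal subgroup is abelian. (The hypothesis on element orders is consistent with this, since an element $cb$ with $c\in C$ has order $12$.) Hence $b$ inverts all of $P$.

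Finally, choose a basis $a_1,\dots,a_k$ of $P$. Then $a_i^3=1$, $[a_i,a_j]=1$, $b^4=1$ and $ba_ib^{-1}=a_i^{-1}$. These relations define a group of order $3^k\cdot4$ that maps onto $G$, so the presentation is exact and $G\cong\Bbb Z_3^k\rtimes\Bbb Z_4$. The only delicate step is excluding $T\cong\Bbb Z_2^2$: there I must check that the index-$2$ subgroup produced is genuinely normal and non-abelian, which holds because any subgroup of index $2$ is normal.
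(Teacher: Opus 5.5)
Your proof is correct and follows the paper's route. You rule out a Klein-four Sylow $2$-subgroup via the index-$2$ subgroup $P\langle t\rangle$, and then force $[P,b]=P$ because otherwise $[P,b]\langle b\rangle$ would be a proper normal non-abelian subgroup. The only difference is cosmetic: you invoke Fitting's Lemma to see directly that $b$ inverts $[P,b]$, whereas the paper assumes $b$ centralizes $[P,b]$ and gets the same contradiction by hand from $[x,b]^2=x^{-1}b^{-2}xb^{2}=1$.
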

\begin{proof}
	Let $N=P\times R$, where $P=\langle a_1,\ldots,a_k\rangle\cong\Bbb Z_3^k$ and $R=\langle a\rangle\cong\Bbb Z_2$, be the normal subgroup of $G$ isomorphic to $\Bbb Z_3^k\times\Bbb Z_2$. Since $|P|$ and $|R|$ are coprime, they are characteristic subgroups of $N$ and so both are normal subgroups of $G$. Furthermore, $P$ is the Sylow $3$-subgroup of $G$. Let $Q$ be a Sylow $2$-subgroup of $G$. Then $R\leq Q$, $|Q|=4$,  $G=PQ$ and $P\cap Q=1$. Since $R$ is a normal subgroup of $G$ of order $2$, we have $R\leq Z(G)$. Let $b\in Q\setminus R$. Then  $Q=\langle b,a\rangle\cong\Bbb Z_2^2$ or $\Bbb Z_4$. Furthermore $G=N\langle b\rangle$. If $Q\cong\Bbb Z_2^2$ then $P\langle b\rangle$ is a proper normal subgroup of $G$. Thus $b$ centralizes $P$ which implies $Q$ does. This means $G=P\times Q$ is abelian, a contradiction. Hence $Q\cong\Bbb Z_4$ and more precisely $Q=\langle b\rangle$, $a=b^2$ and $G=P\langle b\rangle$.
	
	Let $[P,b]=\langle [x,b]\mid x\in P\rangle$, where $[x,b]=x^{-1}b^{-1}xb$. Then $[P,b]<P$ or $[P,b]=P$. In the first case, $[P,b]\langle b\rangle$ is a proper normal subgroup of $G$, and is abelian by our assumption. Thus $b$ commutes with each element of $[P,b]$. So for all $x\in P$, we have
	\[[x,b]^2=x^{-1}b^{-1}xbx^{-1}b^{-1}xb=x^{-1}b^{-1}xx^{-1}b^{-1}xb^2=x^{-1}b^{-2}xb^2=x^{-1}axa=1,\] 
	and so $[x,b]=1$, because every non-identity element of $P$ has order $3$. This means $b$ commutes with all elements of $P$ and therefore $G=P\times\langle b\rangle$ is abelian, a contradiction. Thus
	$[P,b]=P$. Let $i\in\{1,\ldots,k\}$. Let $x\in P$, then 
	\[b^{-1}[x,b]b=b^{-1}(x^{-1}b^{-1}xb)b=(b^{-1}x^{-1}b)b^{-2}xb^2=b^{-1}x^{-1}baxa=b^{-1}x^{-1}bx=[x,b]^{-1},\]
	which implies that for all $i\in\{1,\ldots,k\}$, $b^{-1}a_ib=a_i^{-1}$. This completes the proof.
\end{proof}

	\begin{lem}\label{1088}
	There is no non-abelian Cayley $2$-integral group of order $108$ containing a normal subgroup (maximal among normal subgroups) isomorphic to $G_{36,7}\cong (\Bbb Z_3\times\Bbb Z_3)\rtimes\Bbb Z_4$.
\end{lem}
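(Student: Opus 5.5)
Suppose, for a contradiction, that $G$ is a non-abelian Cayley $2$-integral group of order $108=3^3\cdot 4$ and that $N\unlhd G$ is maximal among the normal subgroups of $G$ with $N\cong G_{36,7}=\langle b,c\rangle\rtimes\langle a\rangle$, where $a$ inverts $b$ and $c$. The plan is to pin down the structure of $G$ and then either exhibit a forbidden subgroup or compute one explicit Cayley graph.

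\textbf{Step 1: constraints on elements and subgroups.} By Corollary \ref{order}, Lemma \ref{sbg-qoutient} and Lemma \ref{3,5}:
\begin{itemize}
\item every element of $G$ has order dividing $12$;
\item the Sylow $3$-subgroup $P$ of $G$ is isomorphic to $\Bbb Z_3^3$.
\end{itemize}
Since $|G:N|=3$, the Sylow $2$-subgroup $Q=\langle a\rangle\cong\Bbb Z_4$ of $N$ is a Sylow $2$-subgroup of $G$. Moreover $P\cap N=\langle b,c\rangle=:P_0$ is characteristic in $N$, hence $P_0\unlhd G$. Next use $P=P_0\times\langle x\rangle$ for some $x$ of order $3$, so $G=P\rtimes$-type with $G=P_0\langle x\rangle\langle a\rangle$. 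By Frattini, $G=N\,N_G(Q)$, so we may choose $x\in P$ normalizing $Q$. Since $\Aut(\Bbb Z_4)\cong\Bbb Z_2$ and $x$ has order $3$, $x$ in fact centralizes $a$.

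\textbf{Step 2: the action of $a$ on $P$.} Apply Fitting's Lemma \ref{fit} to the action of $\langle a\rangle$ on the abelian group $P$, giving $P=C_P(a)\times[P,a]$. We know $[P,a]\supseteq P_0$, and $x\in C_P(a)$, so $[P,a]=P_0$ and $C_P(a)=\langle x\rangle$. Two cases remain.
\begin{itemize}
\item If $x$ centralizes $P_0$, then $P$ is abelian and centralized by $x$, and $x$ commutes with $a$, so $x\in Z(G)$. Hence $G\cong G_{36,7}\times\Bbb Z_3$. In this case $G$ contains $\langle b,a,x\rangle\cong(\Bbb Z_3\rtimes\Bbb Z_4)\times\Bbb Z_3=G_{36,6}$, which is not Cayley $2$-integral by Table \ref{Table23}(33), contradicting Lemma \ref{sbg-qoutient}.
\item Otherwise $P$ is abelian of exponent $3$, so $x$ must centralize $P$ anyway. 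Hence the first case is the only one, and the contradiction is already complete.
\end{itemize}

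\textbf{Step 3 (main obstacle): checking the reduction in Step 1.} The delicate point is justifying that $P$ is elementary abelian. This is what forces $x$ to act trivially, and it rests on Lemma \ref{3,5} applied to $P$ as a subgroup. The choice of $x$ centralizing $a$ must be checked carefully via Frattini: any Sylow $2$-subgroup of $N$ is conjugate in $N$, so $N_G(Q)$ covers $G/N$, and an element of order $3$ can be extracted from $N_G(Q)$ modulo $N_N(Q)=C_{P_0}(a)Q=Q$.

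If instead the isomorphism type in Table \ref{Table23}(33) does not literally match $\langle b,a,x\rangle$, I would fall back to a direct GAP-free computation. Take $S=\{a^{\pm1},(abx)^{\pm1}\}$, mirroring row (33), and use the $2$-dimensional representations $\rho\otimes\chi_j$ with $\rho(a)=\left[\begin{smallmatrix}0&1\\-1&0\end{smallmatrix}\right]$ and $\rho(b)=\mathrm{diag}(\omega,\omega^{-1})$. Showing that two of these representations yield square roots of non-associated integers, for example $\sqrt{3}$ and $\sqrt{2}$, would give $\deg(\Cay(G,S))\ge 4$.
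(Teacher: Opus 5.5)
Your proof is correct, and it takes a different route from the paper. The paper relies on a GAP lookup. It asserts that the only candidates are $G_{108,6}$ and $G_{108,29}$. It discards $G_{108,6}$ because its normal subgroup of order $27$ (necessarily its Sylow $3$-subgroup) is not elementary abelian (Lemma \ref{elab}). It discards $G_{108,29}$ because it has a quotient isomorphic to $S_3\times\mathbb{Z}_3$ (Corollary \ref{3k2}).

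You instead determine $G$ by hand:
\begin{itemize}
\item The Frattini argument gives $|N_G(Q)|=3\,|N_N(Q)|=12$, so $N_G(Q)$ contains an element $x$ of order $3$.
\item This $x$ centralizes $Q\cong\mathbb{Z}_4$, and therefore lies outside $N$, since $C_{P_0}(a)=1$.
\item Then $P_0\langle x\rangle$ is a Sylow $3$-subgroup, hence elementary abelian. So $x$ is central and $G=N\times\langle x\rangle$.
\end{itemize}
Your final contradiction via $\langle b,a,x\rangle\cong \mathrm{Dic}_{12}\times\mathbb{Z}_3$ is legitimate: this subgroup matches the presentation of $G_{36,6}$ in Table \ref{Table23}(33) exactly. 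One can also check by hand that the listed set yields eigenvalues $\pm\sqrt3$ and $\pm\sqrt7$.

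What your route buys is a computer-free argument that explains the paper's list. The elementary abelian Sylow $3$-subgroup plays the role of eliminating $G_{108,6}$, and once it is available the only possible group is $G_{36,7}\times\mathbb{Z}_3$. The parenthetical maximality hypothesis is automatic and never used. The paper's route is shorter but opaque. If you prefer a proved statement to a table entry, note that $G_{36,7}\times\mathbb{Z}_3$ has the quotient $G/\langle c,a^2\rangle\cong S_3\times\mathbb{Z}_3$, so Corollary \ref{3k2} finishes exactly as in the paper.

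A few small clean-ups:
\begin{itemize}
\item Choose $x\in N_G(Q)$ first and then define $P:=P_0\langle x\rangle$, rather than fixing $P$ in advance.
\item Drop the Fitting decomposition and the case split in Step 2, since commutativity of $P$ alone gives $[x,P_0]=1$.
\item Your fallback computation is unnecessary, and as written it would not succeed. The representation with $\rho(a)^2=-I$ only produces $0$ and $\pm\sqrt3$. The second irrationality $\pm\sqrt7$ comes from the other $2$-dimensional representation, $\rho(a)=\left[\begin{smallmatrix}0&1\\1&0\end{smallmatrix}\right]$, tensored with $\chi_j$ for $j\neq 0$.
\end{itemize}
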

\begin{proof}
	Let $G$ be a non-abelian Cayley $2$-integral group of order $108$, and $M$ be a maximal normal subgroup of $G$ isomorphic to  $G_{36,7}$. Then, by GAP library, $G$ is one of the groups $G_{108,6}$ or $G_{108,29}$. By GAP, $G_{108,6}$ has a normal subgroup $N$ of order $27$ which is not elementary abelian, which contradicts Lemma \ref{elab}. Also $G_{108,29}$ has a normal subgroup $N$ such that $G/N\cong S_3\times\Bbb Z_3$, which contradicts Corollary \ref{3k2}.
\end{proof}

\begin{lem}\label{36-18} There is no non-abelian Cayley $2$-integral group of order $36$ containing a  normal subgroup isomorphic to $G_{18,4}\cong (\Bbb Z_3\times\Bbb Z_3)\rtimes\Bbb Z_2$.
\end{lem}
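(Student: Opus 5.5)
Let $G$ be a non-abelian Cayley $2$-integral group of order $36$ with a normal subgroup $M\cong G_{18,4}=\langle a,b,c\rangle$, where $P=\langle b,c\rangle\cong\Bbb Z_3^2$ and $a$ inverts $P$. I will identify $G$ up to a short list and then exhibit, for each possibility, a forbidden subgroup or quotient using the tables and lemmas already proved.

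\textbf{Reduction.} Since $P$ is the unique Sylow $3$-subgroup of $M$, it is characteristic in $M$, hence normal in $G$, and it is the Sylow $3$-subgroup of $G$. Let $Q$ be a Sylow $2$-subgroup of $G$, so $|Q|=4$ and $G=P\rtimes Q$. The subgroup $Q\cap M$ has order $2$; say it is generated by a conjugate of $a$, which acts on $P$ as $-1$. By Corollary \ref{order}, $G$ has no element of order $9$, $18$ or $36$, and $Q$ is either $\Bbb Z_4$ or $\Bbb Z_2^2$.

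\textbf{Case $Q\cong\Bbb Z_4=\langle x\rangle$.} Here $x^2$ inverts $P$, so $x$ acts on $P$ as a matrix $A\in GL_2(3)$ with $A^2=-I$. Every such matrix is conjugate to $\begin{bmatrix}0&-1\\1&0\end{bmatrix}$. The group is therefore $\Bbb Z_3^2\rtimes\Bbb Z_4$ with $x$ acting fixed-point-freely, which is $G_{36,9}$. This group is excluded by Table \ref{Table23}(31).

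\textbf{Case $Q\cong\Bbb Z_2^2=\langle a,y\rangle$.} The involution $y$ commutes with $a$, so it acts on $P$ by an involution $B$ commuting with $-I$. There are three subcases.
\begin{itemize}
\item If $B=I$, then $G\cong G_{18,4}\times\Bbb Z_2$, which contains $\langle c,y\rangle\cdot\langle a\rangle\cong S_3\times\Bbb Z_2$. This is not directly excluded, so I instead take $S=\{a,ab,ac\}$ and twist it by $y$. Alternatively, I observe that $G$ contains $\langle b,c,ay\rangle$, in which $ay$ inverts $P$, which is again $G_{18,4}$. A cleaner route is to look for a forbidden quotient or subgroup. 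I suspect this group is $G_{36,13}$, and Table \ref{Table23}(32) excludes it.
\item If $B=-I$, then $ay$ centralizes $P$ and $G\cong G_{18,4}\times\Bbb Z_2$ again, so the same argument applies.
\item If $B$ is a reflection, diagonalize it as $\mathrm{diag}(1,-1)$. Then $G\cong S_3\times S_3$, generated by $\langle b,ay\rangle$ and $\langle c,a\rangle$ or similar. This is $G_{36,10}$. It has a quotient by one $\Bbb Z_3$ factor equal to $S_3\times\Bbb Z_2$, which is harmless, so I would instead exhibit the subgroup $\langle b,c,ay\rangle$. Here $ay$ acts as $\mathrm{diag}(-1,1)$, so this subgroup is $S_3\times\Bbb Z_3=G_{18,3}$, which is excluded by Table \ref{Table23}(9).
\end{itemize}

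\textbf{Main obstacle.} The hardest step is matching each abstract semidirect product with its GAP identifier and checking that the tabulated counterexample applies. The case $B=\pm I$ needs the most care. In that case, verifying that $G_{18,4}\times\Bbb Z_2=G_{36,13}$ matches row (32) of Table \ref{Table23} is what I would check first, by comparing presentations.
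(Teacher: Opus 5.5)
Your proposal is correct and follows the paper's proof. The paper takes the candidate list $G_{36,9}$, $G_{36,10}$, $G_{36,13}$ from the GAP library, whereas you derive the same three groups by hand from the action of a Sylow $2$-subgroup on $P\cong\Bbb Z_3^2$; both arguments then exclude them using rows (31) and (32) of Table~\ref{Table23} and a subgroup isomorphic to $S_3\times\Bbb Z_3$.

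Two small clean-ups. In the case $B=\pm I$, delete the false starts and the word ``suspect'': $G_{18,4}\times\Bbb Z_2$ is exactly the group presented in row (32), once the repeated relation $(ac)^2$ there is read as $(ad)^2$. In the reflection case, the second direct factor is $\langle c,y\rangle$, not $\langle c,a\rangle$; this slip is harmless, since you only use the subgroup $\langle b,c,ay\rangle$.
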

\begin{proof}
	Suppose, towards a contradiction, that $G$ is a non-abelian Cayley $2$-integral group of order $36$ containing a normal subgroup isomorphic to $G_{18,4}$. By GAP library, $G$ is one of the
	groups $G_{36,9}$,  $G_{36,10}$ or $G_{36,13}$. By the rows $(31)$ and $(32)$ of Table \ref{Table23}, we may assume that $G\cong G_{36,10}$. Then $G$ has a subgroup isomorphic to $G_{18,4}\cong S_3\times\Bbb Z_3$ which is impossible by Corollary \ref{3k2}. 
\end{proof}

	\begin{lem}\label{108}
	There is no non-abelian Cayley $2$-integral group of order $108$.
\end{lem}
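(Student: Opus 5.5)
Suppose, towards a contradiction, that $G$ is a non-abelian Cayley $2$-integral group of order $108=3^3\cdot 4$, and choose it so that every proper subgroup and quotient has already been classified. By Lemma \ref{sbg-qoutient} and Corollary \ref{2,3,5}, every element of $G$ has order dividing $12$. The Sylow $3$-subgroup is elementary abelian, $\Bbb Z_3^3$, by Lemma \ref{elab}. The Sylow $2$-subgroup is $\Bbb Z_4$ or $\Bbb Z_2^2$. Since $G$ is solvable (Lemma \ref{solv}), I will look at a maximal normal subgroup $M$ of $G$. It has prime index, so $|M|=54$ or $36$, and it is Cayley $2$-integral. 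I would first reduce, as in Lemmas \ref{3k-14}--\ref{conj}, to the case where every proper normal subgroup of $G$ is abelian. If some proper normal subgroup $M$ is non-abelian, then it is a non-abelian Cayley $2$-integral group of order $54$, $36$, $18$ or $12$, and I rule these out one by one:
\begin{itemize}
\item order $54$ is impossible by Corollary \ref{3k2};
\item for order $36$, the earlier classification shows that the only Cayley $2$-integral candidate is $G_{36,7}$, and this is excluded by Lemma \ref{1088};
\item order $18$ forces $M\cong G_{18,4}$, and I handle this through a subgroup or quotient of order $36$ together with Lemma \ref{36-18};
\item order $12$ gives $A_4$ or $Dic_{12}$, and here I use the direct-product and Fitting arguments of Lemma \ref{3k-122}.
\end{itemize}

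Once every proper normal subgroup is abelian, I split according to the index of $M$. If $|G:M|=2$, then $M$ is abelian of order $54$ and exponent dividing $6$, so $M\cong\Bbb Z_3^3\times\Bbb Z_2$. Lemma \ref{conj} then gives $G\cong\Bbb Z_3^3\rtimes\Bbb Z_4$ with $b$ inverting every $a_i$. This group contains $\langle a_1,a_2\rangle\rtimes\langle b\rangle$ and also $\langle a_1\rangle\times\langle a_2a_3^{-1}\rangle\cdots$. More usefully, it has the quotient $G/\langle a_3\rangle\cong G_{36,7}$, which is fine, but it also contains the subgroup $\langle a_1,a_2,a_3,b^2\rangle\langle b\rangle$. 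So I need an explicit obstruction here. My first attempt is a Cayley set inside the subgroup $\langle a_1,a_2,b\rangle$ together with a twist by $a_3$, mimicking Lemma \ref{main3k2}: use the $2$-dimensional representations $a_i\mapsto \mathrm{diag}(\omega^{\epsilon_i},\omega^{-\epsilon_i})$, $b\mapsto\left[\begin{smallmatrix}0&-1\\1&0\end{smallmatrix}\right]$ for two different choices of the characters $(\epsilon_1,\epsilon_2,\epsilon_3)$. I would choose $S$ so that the two resulting $2\times 2$ matrices have discriminants $\sqrt{m_1}$ and $\sqrt{m_2}$ with $m_1/m_2$ not a square. Failing that, I would fall back on a GAP computation, as the paper does elsewhere.

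If $|G:M|=3$, then $M$ is abelian of order $36$ and exponent dividing $12$, so $M\cong\Bbb Z_3^2\times\Bbb Z_4$ or $\Bbb Z_3^2\times\Bbb Z_2^2$. The first is excluded by Lemma \ref{3k-14} (with $k=3$), and the second by Lemma \ref{3k-122}. This exhausts the cases.

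The main obstacle is the index-$2$ case, where $G\cong\Bbb Z_3^3\rtimes\Bbb Z_4$: none of its proper subgroups or quotients listed so far is forbidden, so it must be killed by an explicit Cayley graph. An alternative I would also check is the subgroup $\langle a_1a_2, a_3, b\rangle$, in the hope that it is a non-Cayley-$2$-integral group of order $36$ (for instance $G_{36,9}$ or $G_{36,6}$ from Table \ref{Table23}). Since every subgroup $\Bbb Z_3^2\rtimes\langle b\rangle$ is again $G_{36,7}$, however, this probably fails, and the explicit eigenvalue computation, or a GAP row added to Table \ref{Table23}, is what I would rely on.
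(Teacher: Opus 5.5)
Your reduction is sound up to the last step, but the proof stops exactly at the case that matters, and your diagnosis of that case is wrong.

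For $G\cong\Bbb Z_3^3\rtimes\Bbb Z_4$ with $b$ inverting $P=\langle a_1,a_2,a_3\rangle$, you assert that none of its known subgroups or quotients is forbidden. You then propose to find irrational eigenvalues in the $2$-dimensional representations with $\rho(b)$ the order-$4$ matrix you wrote down. Those representations satisfy $\rho(b^2)=-I$, and they can never work. For $x\in P$ one has $(xb)^{-1}=b^{-1}x^{-1}=xb^{3}=xb\cdot b^{2}$, so $\rho(xb)+\rho((xb)^{-1})=\rho(xb)(I+\rho(b^2))=0$. The remaining pieces of $S$ give scalars:
\begin{itemize}
\item a pair $\{x,x^{-1}\}$ contributes $(\theta(x)+\theta(x)^{-1})I$;
\item a pair $\{xb^2,x^{-1}b^2\}$ contributes $-(\theta(x)+\theta(x)^{-1})I$;
\item the element $b^2$ contributes $-I$.
\end{itemize}
Here $\theta(x)+\theta(x)^{-1}\in\{2,-1\}$. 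Hence $\rho(S)$ is an integer scalar matrix for every inverse-closed $S$, whatever $(\epsilon_1,\epsilon_2,\epsilon_3)$ you take. All non-integral eigenvalues of Cayley graphs on this $G$ come from representations with $\rho(b^2)=I$, i.e.\ from $G/\langle b^2\rangle$. So ``fall back on GAP'' is not a minor detail: as written, the decisive case is unproved.

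The missing observation is one line. In Lemma \ref{conj}, $b^2$ generates the central $\Bbb Z_2$ factor of $M\cong\Bbb Z_3^3\times\Bbb Z_2$. Moreover, $G/\langle b^2\rangle\cong\Bbb Z_3^3\rtimes\Bbb Z_2$ with the involution inverting every element. This is the group $H_3$ of Lemma \ref{main3k2} with $p=3$, a non-abelian group of order $2\cdot 3^3$, so it is not Cayley $2$-integral by Corollary \ref{3k2}. Lemma \ref{sbg-qoutient} then gives the contradiction. This is the mechanism the paper uses in its case $r=34$ (a quotient isomorphic to $G_{54,14}$).

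With that fix, your structural route is a genuine alternative to the paper's proof. The paper enumerates in GAP the $21$ non-abelian groups of order $108$ with admissible element orders and excludes each by a forbidden subgroup or quotient. Yours replaces that enumeration by Lemmas \ref{3k-14}, \ref{3k-122}, \ref{conj}, \ref{1088} and Corollary \ref{3k2}, in the spirit of Corollary \ref{36main}.

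Two smaller repairs are needed:
\begin{itemize}
\item You only need the maximal normal subgroups (orders $54$ and $36$) to be abelian, since every proper normal subgroup lies in one. So drop the sketchy items for orders $18$ and $12$, which in any case omit $D_{12}$ and $S_3$.
\item You cannot quote Corollary \ref{36main} for the fact that $G_{36,7}$ is the only non-abelian Cayley $2$-integral group of order $36$, because its proof uses the present lemma. Justify it directly, e.g.\ from Table \ref{Table23} together with Corollaries \ref{2,3,5} and \ref{3k2}.
\end{itemize}
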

\begin{proof}
	By GAP, there are exactly $39$ non-abelian groups  $L$ of order $108$. Among them, the only groups that order of each element is in $\{1,2,3,4,6,8,12\}$
	are $G_{108,r}$, where 
	$$r\in\{8,11,13,15,17,22,25,28,30,32,33,34,36,37,38,39,40,41,42,43,44\}.$$
	If $r=8$ or $33$, then $L$ has a normal subgroup isomorphic to $G_{36,7}$, which contradicts Lemma \ref{1088}. If $r=11,13$, or $30$, then $L$ has a subgroup isomorphic to $G_{54,10}$, and if  $r$ is one of the integers $15,17,25,28,36,37,38,39,40,42$ or $43$, then $L$ has a subgroup isomorphic to $G_{18,3}$, which contradicts Corollary \ref{3k2}. If $r=22$ or $41$, then $L$ has a subgroup isomorphic to $G_{36,11}$, if $r=32$ then $L$ has a subgroup isomorphic to $G_{36,6}$, and if $r=44$ then $G$ has a subgroup isomorphic to $G_{36,13}$. The groups $G_{36,6}$, $G_{36,11}$ and $G_{36,13}$ are not $2$-integral, by Table \ref{Table23}, a contradiction. If $r=34$ then $L$ has
	a normal subgroup $N$ such that $L/N$ is isomorphic to $G_{54,14}$ which will be Cayley $2$-integral by Lemma \ref{sbg-qoutient}, a contradiction by Corollary \ref{3k2}. This completes the proof. 
\end{proof} 

Recall that by GAP notation, $G_{36,7}=\langle a_1,a_2,b\mid a_1^3=a_2^3=b^4=[a_1,a_2]=1, a_1^b=a_1^{-1}, a_2^b=a_2^{-1}\rangle$. In the following result, we prove that the only non-abelian Cayley $2$-integral group of order $3^k\times 4$, $k\geq 2$ is $G_{36,7}$.
\begin{core}\label{36main}
	Let $G$ be a finite non-abelian group of order $3^k\times 4$, $k\geq 2$. Then $G$ is Cayley $2$-integral if and only if $G\cong G_{36,7}$.
\end{core}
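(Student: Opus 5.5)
The sufficiency direction is already settled: $G_{36,7}$ is Cayley $2$-integral by Lemma \ref{remains}(12). For necessity, I would argue by induction on $k$, taking a counterexample $G$ of order $3^k\cdot 4$ with $k\geq 2$ minimal. The argument rests on three standing facts.
\begin{itemize}
\item[(i)] By Corollary \ref{order}, every element of $G$ has order dividing $12$.
\item[(ii)] By Lemma \ref{elab}, the Sylow $3$-subgroup $P$ of $G$ is elementary abelian, $P\cong\Bbb Z_3^k$.
\item[(iii)] By Lemma \ref{sbg-qoutient}, every subgroup and every quotient of $G$ is Cayley $2$-integral.
\end{itemize}

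The first step is to reduce to the case where every proper normal subgroup of $G$ is abelian, which is the standing hypothesis of Lemmas \ref{3k-14}, \ref{3k-122} and \ref{conj}. Suppose $M\lhd G$ is a non-abelian proper normal subgroup, with $|M|=3^j2^i$. Then $M$ is a non-abelian Cayley $2$-integral group, so:
\begin{itemize}
\item if $i=2$ and $j<k$, minimality gives $M\cong G_{36,7}$;
\item if $i=1$, Corollary \ref{3k2} gives $M\cong G_{18,4}$;
\item if $i=0$, $M$ would be a non-abelian $3$-group, which is excluded by Lemma \ref{elab}.
\end{itemize}
The case $M\cong G_{18,4}$ with $|G|=36$ is excluded by Lemma \ref{36-18}. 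The case $|G|=108$ is excluded outright by Lemma \ref{108}. For larger $k$, I would pass to a subgroup $P_0M$ of order $108$ or $36$, where $P_0$ is a suitable subgroup of $P$ normalizing $M$, to reach the same contradictions. I expect this reduction for general $k$ to be the main obstacle. The paper handles order $108$ by GAP, so for general $k$ I would try to locate such a subgroup of order $108$ containing $G_{36,7}$ or $G_{18,4}$ as a normal subgroup, and then apply Lemmas \ref{1088} and \ref{108}.

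With every proper normal subgroup abelian, I next take a maximal normal subgroup $N$. Since $G$ is solvable by Lemma \ref{solv}, $N$ has prime index $2$ or $3$. By (ii) and Proposition \ref{baseab}, $N$ is an abelian $\{2,3\}$-group of exponent dividing $12$ with elementary abelian Sylow $3$-subgroup. This leaves four possibilities:
\begin{itemize}
\item $N\cong\Bbb Z_3^{k-1}\times\Bbb Z_4$ (index $3$), which is excluded by Lemma \ref{3k-14} when $k\geq3$;
\item $N\cong\Bbb Z_3^{k-1}\times\Bbb Z_2^2$ (index $3$), which is excluded by Lemma \ref{3k-122};
\item $N\cong\Bbb Z_3^k\times\Bbb Z_2$ (index $2$), which by Lemma \ref{conj} forces $G\cong\Bbb Z_3^k\rtimes\Bbb Z_4$ with inversion action.
\item $N\cong\Bbb Z_3\times\Bbb Z_4$ when $k=2$, which Lemma \ref{3k-14} does not cover because it needs $k\geq3$. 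I would treat this case directly, either by reproducing that lemma's centralizer argument (the index-$2$ centralizer contains $N$) or by a finite check of the groups of order $36$.
\end{itemize}

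Finally, when $G\cong\Bbb Z_3^k\rtimes\Bbb Z_4$ with $b$ inverting $P$, I would show that $k\geq3$ is impossible. Take $H=\langle a_1,a_2,a_3,b^2\rangle\rtimes\langle b\rangle$ modulo $\langle b^2\rangle$. This quotient is $\Bbb Z_3^3\rtimes\Bbb Z_2$ with inversion, which is the group $H_3$ of Lemma \ref{main3k2} and is not Cayley $2$-integral. Concretely, $G/\langle b^2\rangle$ has order $2\cdot3^k$ and is non-abelian, so by Corollary \ref{3k2} it must be $G_{18,4}$, which forces $k=2$ and $G\cong G_{36,7}$.
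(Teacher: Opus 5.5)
Your architecture matches the paper's: a minimal counterexample, a reduction to the case where every proper normal subgroup is abelian, and then an abelian maximal normal subgroup of index $2$ or $3$ handled by Lemmas \ref{3k-14}, \ref{3k-122} and \ref{conj}. The reduction step, however, has a genuine gap.

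\textbf{The gap.} Your case list for a non-abelian normal $M$ of order $3^j2^i$ breaks down at $j=1$. Minimality of $G$ only covers orders $3^j\cdot 4$ with $j\geq 2$, and Corollary \ref{3k2} only covers orders $2\cdot 3^j$ with $j\geq 2$. So a normal $S_3$, or a normal non-abelian subgroup of order $12$ ($A_4$, $D_{12}$ or $Dic_{12}$, all Cayley $2$-integral), is never excluded. You also leave the general-$k$ case open.

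\textbf{The missing idea.} The paper examines only \emph{maximal} normal subgroups. Every proper normal subgroup lies in a maximal one, so if all maximal normal subgroups are abelian, then all proper normal subgroups are. By solvability, a maximal normal $M$ has index $2$ or $3$:
\begin{itemize}
\item Index $2$: $M$ is non-abelian of order $2\cdot 3^k$, so Corollary \ref{3k2} forces $k=2$ and $M\cong G_{18,4}$, contradicting Lemma \ref{36-18}.
\item Index $3$ with $k\geq 3$: minimality gives $M\cong G_{36,7}$, hence $|G|=108$, contradicting Lemma \ref{108}.
\end{itemize}
The obstacle you anticipated is not real either. Since $M\lhd G$, the set $M\langle x\rangle$ is a subgroup of order $3|M|$ for any $x\in P\setminus M$.

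\textbf{A residual case.} This still leaves $k=2$ with $M$ non-abelian of order $12$ and index $3$. The paper's Case 1 also passes over this, so you should treat it directly. Let $\langle c\rangle=P\cap M$ and $y\in P\setminus M$. Conjugation by $y$ is an automorphism of $M$ of order dividing $3$ that fixes $c$. For $M\in\{A_4,D_{12},Dic_{12}\}$, every such automorphism is conjugation by some $c^j$. Hence $yc^{-j}$ centralizes $M$, and $G\cong M\times\Bbb Z_3$. Each possibility is excluded by Table \ref{Table23}: row $(3)$ for $A_4\times\Bbb Z_3$, row $(9)$ for $D_{12}\times\Bbb Z_3$ (since $S_3\times\Bbb Z_3\leq D_{12}\times\Bbb Z_3$), and row $(33)$ for $Dic_{12}\times\Bbb Z_3$.

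\textbf{What is sound.} The rest of your argument holds, and in two places it improves on the paper's text. The centralizer proof of Lemma \ref{3k-14} does work verbatim for $k=2$. Your final step via $G/\langle b^2\rangle\cong\Bbb Z_3^k\rtimes\Bbb Z_2$ and Corollary \ref{3k2} correctly rules out $k\geq 3$. It avoids the GAP-based Lemma \ref{108} that the paper uses at that point.
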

\begin{proof}
	First suppose, towards a contradiction, that $G\ncong G_{36,7}$ is a  Cayley $2$-integral group which has minimal order among all non-abelian Cayley $2$-integral groups of order $3^k\times 4$, where $k\geq 2$. Since $G$
	is a solvable group, it is not a simple group and so it has at least one proper normal subgroup.
	We deal with the following cases:
	
	\textbf{Case 1.} $G$ has a proper maximal normal subgroup (maximal among normal subgroups) $M$,  which is non-abelian. Then $G/M\cong \Bbb Z_2$ or $\Bbb Z_3$, since $G/M$ is simple and solvable. In the first case, $M$ is a non-abelian Cayley 
	$2$-integral group of order $3^k\times 2$. Then by Corollary \ref{3k2}, $k=2$, $M\cong G_{18,4}$ and $|G|=36$, which contradicts Lemma \ref{36-18}. So the latter case holds, which means $|M|=3^{k-1}\times 4$. Since $M$ is also Cayley $2$-integral group, our assumption on $G$ and Lemma \ref{108} imply that $M$ is abelian. 
		
	\textbf{Case 2.}  Every proper maximal normal subgroup of $G$ is abelian. Then every proper normal subgroup of $G$ is abelian. Let $M$ be a maximal normal subgroup of $G$. Then $M$ is abelian and again  $G/M\cong\Bbb Z_2$ or $\Bbb Z_3$. First suppose that  $G/M\cong\Bbb Z_3$ then $|M|=3^{k-1}\times 4$. Since $M$ is Cayley $2$-integral and abelian,  Proposition \ref{baseab} implies that $M\cong\Bbb Z_3^{k-1}\times\Bbb Z_2^2$ or $\Bbb Z_3^{k-1}\times\Bbb Z_4$, which is a contradiction by Lemma \ref{3k-122} and Lemma \ref{3k-14}, respectively, because each element of $G$ has order dividing $12$ by Corollary \ref{2,3,5}. Hence we may assume that $G/M\cong\Bbb Z_2$. Thus $M\cong\Bbb Z_3^k\times\Bbb Z_2$ by Proposition \ref{baseab}. Now Lemma \ref{conj} implies that $G$ has the following presentation:
	\[G=\langle a_1,\ldots,a_k,b\mid a_i^3=b^4=[a_i,a_j]=ba_ib^{-1}a_i=1, i,j=1,\ldots,k\rangle\cong\Bbb Z_3^k\rtimes\Bbb Z_4.\]
	If $k\geq 3$ then $\langle a_1,a_2,a_3,b\rangle$ is a subgroup of $G$ of order $108$ which is Cayley $2$-integral. This contradicts Lemma \ref{108}. Thus $k=2$ and $G\cong G_{36,7}$, a contradiction.
	This proves one direction. The converse direction follows from Table \ref{Table23}(12).
	\end{proof}

\begin{lem}\label{akhlaghi1}
	Let $G$ be a finite non-abelian  $\{2,3\}$-group which is not a $p$-group. If $G$ is a Cayley $2$-integral group, then either $G\cong A_4$ or $G$ has a normal Sylow $3$-subgroup.
\end{lem}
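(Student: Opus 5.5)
Let $G$ be a finite non-abelian Cayley $2$-integral $\{2,3\}$-group that is not a $p$-group. I will argue by induction on $|G|$, or equivalently by a minimal counterexample. So suppose $G\ncong A_4$ and $G$ has no normal Sylow $3$-subgroup, with $|G|$ minimal. The hypotheses pass to subgroups and quotients by Lemma \ref{sbg-qoutient}. By Corollary \ref{2,3,5} every element has order dividing $8$ or $12$, and by Lemma \ref{3,5} the Sylow $3$-subgroups are elementary abelian. In particular $G$ has no element of order $9$ and no element of order $24$. The natural tool is the Fitting subgroup $F=F(G)$, which is nontrivial since $G$ is solvable (Lemma \ref{solv}) and satisfies $C_G(F)\le F$.

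\textbf{Step 1 (the $3$-part of $F$).} Write $F=O_2(G)\times O_3(G)$. If $O_2(G)=1$, then $F=O_3(G)$ is an elementary abelian $3$-group with $C_G(F)=F$. Hence $G/F$ embeds in $GL_k(3)$ as a $\{2,3\}$-group with $O_3(G/F)=1$. If $G/F\neq 1$, then $G$ contains $F\rtimes T$ for a $2$-subgroup $T$ acting faithfully. Such groups would be ruled out by the order-$2\cdot 3^k$ and order-$4\cdot 3^k$ classifications (Corollary \ref{3k2}, Corollary \ref{36main}) together with the $2$-group list (Proposition \ref{non-ab-2groups}). Whatever remains should force $F$ to be the Sylow $3$-subgroup, i.e.\ normal. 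So I may assume $O_2(G)\neq 1$.

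\textbf{Step 2 (pass to the quotient).} Set $N=O_2(G)$ and $\overline G=G/N$. Then $\overline G$ is a Cayley $2$-integral $\{2,3\}$-group, so by minimality $\overline G$ is abelian, a $3$-group, a $2$-group, $A_4$, or has a normal Sylow $3$-subgroup $\overline P=PN/N$.

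\textbf{Step 3 (exclude the quotient cases one by one).}
\begin{itemize}
\item[(i)] If $\overline G$ has a normal Sylow $3$-subgroup, then $PN\unlhd G$. If $P$ were not normal, I would examine the action of $P$ (elementary abelian) on $N$. Here $N$ is one of the $2$-groups in Proposition \ref{non-ab-2groups} or an abelian group from Proposition \ref{baseab} of exponent $2$, $4$ or $8$. Exponent $8$ is impossible, because a $3$-element acting nontrivially on $\mathbb{Z}_8$-type structure, or centralizing it, would produce a forbidden order or configuration. The automorphism groups of these specific $N$ are small, so a nontrivial action of an element of order $3$ produces subgroups such as $A_4\times\mathbb{Z}_2$, $\mathbb{Z}_2^4\rtimes\mathbb{Z}_3$, $\mathbb{Z}_4^2\rtimes\mathbb{Z}_3$ or $A_4\times\mathbb{Z}_3$. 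These are excluded by Table \ref{Table23} (rows 2--5) and Lemma \ref{3k-122}. Thus $[N,P]$ is tiny: for $N\cong\mathbb{Z}_2^2$ the action can only give $A_4$, and any extra $3$-element or $2$-part yields a row of Table \ref{Table23}. If instead $[N,P]=1$, then $PN=P\times N$ and $P$ is characteristic in $PN$, hence normal in $G$.
\item[(ii)] If $\overline G$ is a $3$-group, then $G=N\rtimes P$ and the same analysis applies.
\item[(iii)] If $\overline G\cong A_4$, then $G$ has order at least $24$ and contains a normal $2$-subgroup with quotient $A_4$. The preimage of $V_4$ is a $2$-group on which a $3$-element acts nontrivially. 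This again yields one of the forbidden groups $S_4$, $G_{24,13}$, $G_{48,50}$ or $G_{48,3}$ from Table \ref{Table23}, or a subgroup of them.
\item[(iv)] A $2$-group $\overline G$ is impossible since $O_2(\overline G)=1$ unless $\overline G=1$.
\end{itemize}

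\textbf{Main obstacle.} I expect the main obstacle to be Step 3(i): controlling the action of an element $x$ of order $3$ on $N$. The plan is to reduce to the subgroup $\langle N_0,x\rangle$, where $N_0$ is the minimal $x$-invariant subgroup $[N,x]$. By Fitting's Lemma (Lemma \ref{fit}) on an abelian $x$-invariant section, this reduces to the case $N_0\cong\mathbb{Z}_2^2$, giving $A_4$, or $\mathbb{Z}_4^2$, or $\mathbb{Z}_2^4$, or $Q_8$, giving $\mathrm{SL}(2,3)$. The cases $\mathbb{Z}_4^2$ and $\mathbb{Z}_2^4$ are killed by Table \ref{Table23}. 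The case $Q_8$ also needs to be excluded, via a suitable Cayley set on $\mathrm{SL}(2,3)$ that contains an element of order $6$ whose spectrum is checked directly. The last step is to show that once $[N,x]\cong\mathbb{Z}_2^2$, any further element forces $G\cong A_4$ or a Table \ref{Table23} group; in particular, $G$ of order $12$ non-abelian without a normal Sylow $3$-subgroup is exactly $A_4$.
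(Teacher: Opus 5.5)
Choosing $N=O_2(G)$ is a cleaner decomposition than the paper's, which takes $N$ minimal normal and must treat $G/N\cong A_4$ separately. But you do not exploit it, and Step~3(i), which carries all the weight, is asserted rather than proved.

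\textbf{The reductions you have not exploited.} Step~1 is one line: $P$ is abelian and contains $O_3(G)$, so if $O_2(G)=1$ then $P\le C_G(F(G))\le F(G)$. The same line applied to $\overline G=G/O_2(G)$, which has $O_2(\overline G)=1$, gives $PN\unlhd G$ and $C_{\overline G}(\overline P)\le\overline P$ with no induction. In particular your case~(iii) is vacuous, since $O_2(A_4)\ne1$.

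\textbf{The gap in Step~3(i).} The missing piece is control of an $x\in P$ acting nontrivially on $N$. Lemma~\ref{fit} needs an abelian group, and nothing you write forces $[N,x]$ into your list. The missing idea is to work in $V=N/\Phi(N)$.
\begin{itemize}
\item By Burnside's theorem $x$ acts nontrivially on $V$, and Lemma~\ref{fit} gives $V=C_V(x)\times[V,x]$.
\item If $C_V(x)\ne1$, the section $N\langle x\rangle/\Phi(N)$ contains $\mathbb{Z}_2\times A_4$ (Table~\ref{Table23}(2)). If $|[V,x]|\ge16$, it contains the Frobenius group $\mathbb{Z}_2^4\rtimes\mathbb{Z}_3$ (row 4).
\item So $N$ is $2$-generated with $x$ irreducible on $V$. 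Propositions~\ref{baseab} and~\ref{non-ab-2groups} then leave $N\in\{\mathbb{Z}_2^2,\mathbb{Z}_4^2,\mathbb{Z}_8^2,Q_8\}$. The middle two have a section $\mathbb{Z}_4^2\rtimes\mathbb{Z}_3$ (row 5).
\item If $N\cong\mathbb{Z}_2^2$, then $P/C_P(N)\cong\mathbb{Z}_3$, and $C_P(N)\ne1$ would give $A_4\times\mathbb{Z}_3$ (row 3). Also $|G:PN|\le|\Aut(\mathbb{Z}_3)|=2$. So $G\cong A_4$ unless $G\cong S_4$ or $A_4\times\mathbb{Z}_2$, which rows 1 and 2 exclude.
\end{itemize}

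\textbf{The $Q_8$ case.} That leaves $N\cong Q_8$, i.e.\ $\mathrm{SL}(2,3)\le G$, which you rightly single out but do not settle. The paper has the same hole. In its case $G/N\cong A_4$ with $N$ central of order $2$, it cites ``the classification of groups of order $24$'' to get $G\cong\mathbb{Z}_2\times A_4$. This overlooks $\mathrm{SL}(2,3)$, which is absent from Table~\ref{Table23}, and Lemma~\ref{upto48} later excludes $\mathrm{SL}(2,3)$ by citing this very lemma.

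Here is a witness. Use the unit-quaternion model $\{\pm1,\pm i,\pm j,\pm k,\frac12(\pm1\pm i\pm j\pm k)\}$ of $\mathrm{SL}(2,3)$. Let $g=\frac12(-1+i+j+k)$, $h=gj=\frac12(-1-i-j+k)$ and $S=\{g^{\pm1},h^{\pm1},i^{\pm1}\}$.
\begin{itemize}
\item Let $\rho$ be the natural $2$-dimensional representation and $\lambda$ a linear character of order $3$. Then $(\lambda\rho)(S)=I+\epsilon\sqrt{-3}\,\rho(k)$ with $\epsilon=\pm1$, whose eigenvalues are $1\pm\sqrt3$.
\item Let $\sigma$ be the $3$-dimensional representation (conjugation on pure quaternions). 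Then $\sigma(S)$ has eigenvalues $-2,\pm2\sqrt2$.
\end{itemize}
Thus $\textrm{SF}(\Cay(\mathrm{SL}(2,3),S))\supseteq\mathbb{Q}(\sqrt2,\sqrt3)$. With this check and the $N/\Phi(N)$ argument, your approach gives a complete proof, and no minimal-counterexample set-up is needed.
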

\begin{proof}
Let $|G|=2^a 3^b$, where $a,b\geq 1$ and $P$ be a Sylow $3$-subgroup of $G$. Then Lemmas \ref{elab} and \ref{sbg-qoutient} imply that $P$ is an elementary abelian $3$-group.	Assume $G$ is a counterexample with minimal order, and $N$ is a minimal normal subgroup of $G$. Then either $G/N\cong A_4$ or $G/N$ has a normal Sylow $3$-subgroup. 
	
	First we assume that $G/N\cong A_4$. Since $G$ is a solvable group, $N$ is a $p$-group, where $p=2$ or $3$. If $p=3$ then $P\leq C_G(N)$. Since $PN/N\subseteq C_G(N)/N\unlhd G/N\cong A_4$, we have $G=C_G(N)$. Thus we may assume that $|N|=3$ and by the classification of groups of order $48$ we get that $G=N\times H$, where $H\cong A_4$,  which contradicts Table \ref{Table23}(3). Thus $N$ is a $2$-group which implies $|P|=3$. 
	Note that $N\cap Z(Q)\not =1	$ is a normal subgroup of $G$, where $Q$ is the normal Sylow $2$-subgroup of $G$. Hence $N\subseteq Z(Q)$.  If there exists any element $x\in N$ such that $3$ divides $|C_G(x)|$, then $x\in Z(G)$ and so $N=\langle x\rangle\cong\Bbb Z_2$. Hence according to the classification of the groups of order 24, we have  $G\cong\Bbb Z_2\times A_4$, which is impossible by Table \ref{Table23}(2). Thus for all $x\in N$, we have $C_G(x)\leq Q$.  Since the conjugation action of a Sylow $3$-subgroup of $A_4$ on its Sylow $2$-subgroup is Frobenius, then  for all $x\in Q-N$  we deduce that  	
	$C_G(x)\subseteq Q$ and so  $G=Q\rtimes P$ is a Frobenius group, by \cite[Theorem 6.4]{Issacs}. Thus $3$ divides $2^a-1$, which means $a$ is even. If $Q$ is non-abelian, then by Proposition \ref{non-ab-2groups}, either $|Q|=16$ and $Q\ncong D_{16}, QD_{16}$ or $Q\cong Q_8\times\Bbb Z_2^{a-3}$. Thus $Q'$ or $Z(Q)$ is a cyclic group of order $2$. If $Z(Q)$ is not a cyclic group, set $M=Q'$ and otherwise set $M=Z(Q)$. Then $M\unlhd G$. So $MP$ (a group of order $6$ with a normal subgroup of order $2$) can not be a Frobenius group, a contradiction. Thus, we may assume that \( Q \) is abelian. Then, by Proposition \ref{baseab}, \( Q \) is isomorphic to \( \mathbb{Z}_2^{a_1} \times \mathbb{Z}_4^{a_2} \times \mathbb{Z}_8^{a_3} \), where \( a_1, a_2, a_3 \geq 0 \). Given that \( G/N \cong A_4 \), it is clear that \( Q/N \cong \mathbb{Z}_2^2 \). This implies that \( Q \cong \mathbb{Z}_2^{a_1} \times \mathbb{Z}_4^{a_2} \) and \( \Phi(Q) \cong \mathbb{Z}_2^{a_2} \). 
	
	First, let \( a_2 = 0 \). If \( a_1 > 2 \), then \( G \) has a Frobenius factor isomorphic to \( \mathbb{Z}_2^4 \rtimes \mathbb{Z}_3 \), which is Cayley \( 2 \)-integral, leading to a contradiction as shown in Table \ref{Table23}(5). Therefore, we conclude that \( a_1 = 2 \), and thus \( G \cong A_4 \), which again is a contradiction. Hence, we have \( a_2 \neq 0 \). 
	
	Since \( \Phi(Q) \leq N \), it follows that \( N = \Phi(Q) \cong \mathbb{Z}_2^{a_2} \). Consequently, \( Q/N \cong \mathbb{Z}_2^{a_1 + a_2} \), leading to \( a_1 + a_2 = 2 \). This implies that \( a_2 \) can be \( 1 \) or \( 2 \). In the first case, \( NP = N \rtimes P \) becomes a Frobenius group, which is impossible. Hence, we may assume \( a_2 = 2 \), meaning \( N \cong \mathbb{Z}_2^2 \) and \( Q \cong \mathbb{Z}_4^2 \). Then, \( G \cong \mathbb{Z}_4^2 \rtimes \mathbb{Z}_3 \) becomes a Cayley \( 2 \)-integral group, contradicting Table \ref{Table23}(4). 
	
	Now, we assume that \( G/N \) has a normal Sylow \( 3 \)-subgroup. Then \( PN/N \unlhd G/N \), which implies \( PN \unlhd G \). If \( N \) is a \( 3 \)-group, then \( PN = P \) and thus \( P \unlhd G \), leading to a contradiction. Therefore, we have demonstrated that every minimal normal subgroup \( N \) of \( G \) is an elementary abelian \( 2 \)-group and \( PN \unlhd G \).
	
	Let \( 1 \neq x \in P \). If \( x \in C_G(N) \), then \( |C_G(N)| \) is divisible by \( 3 \). Let \( K \) be a Sylow \( 3 \)-subgroup of \( C_G(N) \). Then \( KN/N \) is a characteristic subgroup of \( C_G(N)/N \), and since \( C_G(N)/N \) is normal in \( G/N \), we conclude that \( KN/N \) is normal in \( G/N \), implying that \( NK = N \times K \) is a normal subgroup of \( G \). Thus, \( G \) has a minimal normal subgroup that is a \( 3 \)-group, which is impossible.
	
	Now set \( H = \langle N, x \rangle = N \rtimes \langle x \rangle \). By  Fitting's Lemma, we have \( N = [N, \langle x \rangle] \times C_N(x) \). This implies that \( H = N_2 \times (N_1 \rtimes \langle x \rangle) \), where \( N_1 = [N, \langle x \rangle] \) and \( N_2 = C_N(x) \). As \( x \notin C_G(N) \), it follows that \( N_2 \neq N \). If \( |N_1| > 4 \), then \( H \) contains a Frobenius subgroup isomorphic to \( \mathbb{Z}_2^4 \rtimes \mathbb{Z}_3 \), leading to a contradiction. Therefore, \( |N_1| = 4 \). If \( N_2 > 1 \), then \( H \) would have a subgroup isomorphic to \( \mathbb{Z}_2 \times( \mathbb{Z}_2^2 \rtimes \mathbb{Z}_3 )\), again resulting in a contradiction. Thus, we have \( N_1 \cong \mathbb{Z}_2^2 \) and \( N_2 = 1 \). Therefore, \( N = N_1 \) and \( H = N_1 \rtimes \langle x \rangle \cong A_4 \), forming a Frobenius group. 
	
	The previous discussion applies for all \( 1 \neq x \in P \), implying that \( N \rtimes P \) is a Frobenius group. Since \( P \) is an elementary abelian \( 3 \)-group, we conclude that \( P = \langle x \rangle \) by \cite[Corollary 6.17]{Issacs}. Therefore, \( H \unlhd G \) and \( G/H \) is a \( 2 \)-group. Now, \( G \) contains a normal subgroup \( K \), which is Cayley \( 2 \)-integral, such that \( H < K \) and \( K/H \cong \mathbb{Z}_2 \). Thus, \( K \cong A_4 \times \mathbb{Z}_2 \) or \( S_4 \), contradicting Table \ref{Table23}(2) or Table \ref{Table23}(1), respectively. This completes the proof.\end{proof}
	
%	Let $C=C_G(N)$. If $3$ divides $|C|$, then $P_0:=P\cap C\neq 1$ is a Sylow $3$-subgroup of $C$.  

%	Put $H=NP$. If $H$ is a Frobenius group, then $N\cong\Bbb Z_2^2$ and $P\cong\Bbb Z_3$, which means $H\cong A_4$. Then $G/H$ is a $2$-group and so $G$ has a subgroup $K$ such that 
%	$H<K$ and $K/H\cong\Bbb Z_2$. Thus $K\cong A_4\times\Bbb Z_2$ or $S_4$ is Cayley $2$-integral, a contradiction by Lemma \ref{selected}.

%	???????We claim that the action of $P$ on $N$ is Frobenious.  Let this action is not Frobenius. Then there exists $1\neq x\in P$ such that $C_H(x)\nsubseteq P$, where $H=NP$. Thus there exists $1\neq a\in N$ and $y\in P$ such that $ayx=xay$. Since $xy=yx$, we have $a\in C_N(x)$. On the other hand, by Fitting Theorem \cite[Theorem 4.34]{Issacs}, $N=N_1\times N_2$, where $N_1=[N,\langle x\rangle]$ and $N_2=C_N(x)$. This implies that 
	
%	 Put $H=N\langle x\rangle$. Then $H=N_2\times H_1$, where $H_1=N_1\rtimes \langle x\rangle$. Let $x\notin C_G(N)$. Then $H_1$ is a Frobenious group and so $N_1\cong\Bbb Z_2^{2a}$ for some $a\geq 1$. If $a\geq 2$ then $H_1$ has a quotient isomorphic to $\Bbb Z_2^4\rtimes\Bbb Z_3$ which is Cayley $2$-integral, a contradiction. Hence $N_1\cong\Bbb Z_2^2$. If $|N_2|>1$ then $H$ has a subgroup isomorphic to $\Bbb Z_2\times A_4$, which is not a Cayley $2$-integral group, a contradiction. Thus $N\cong\Bbb Z_2^2$ and $H\cong A_4$.
	
%	If $C_G(N)\cap P=1$ then 

\begin{lem}\label{upto48}
	Let $G$ be a finite non-abelian $\{2,3\}$-group of order $|G|\leq 48$, which is not a $p$-group. Then $G$ is Cayley $2$-integral if and only if $G$ is one of the groups
	\begin{itemize}
		\item[(1)] $S_3$,
		\item[(2)] $G_{12,1}\cong Dic_{12}$,
		\item[(3)] $G_{12,3}=\langle a,b\mid   b^2=a^3=(ba)^3=1 \rangle\cong A_4$,
		\item[(4)] $G_{12,4}\cong D_{12}$,
		\item[(5)] $G_{18,4}=\langle a,b,c\mid  a^2=b^3=c^3=(ab)^2=(ac)^2=c^{-1}c^b=1 \rangle\cong (\Bbb Z_3\times \Bbb Z_3)\rtimes \Bbb Z_2$,
		\item[(6)] $G_{24,4}=\langle a,b,c\mid  c^3=b^2a^2=baba^{-1}=a^4=c^ac=c^{-1}c^b=(a^{-1}c)^2a^{-2}=1 \rangle \cong\Bbb Z_3\rtimes Q_8$,
		\item[(7)] $G_{24,11}=\langle a,b,c\mid  c^3=b^2a^2=baba^{-1}=a^4=c^{-1}c^a=c^{-1}c^b=1\rangle\cong Q_8 \times\Bbb Z_3 $,
		\item[(8)] $G_{24,7}=\langle a,b,c\mid b^2=c^3=a^4=c^ac=bb^a=b^cb=1 \rangle\cong\Bbb Z_2\times(\Bbb Z_3\rtimes \Bbb Z_4)$,
		\item[(9)] $G_{36,7}=\langle a,b,c\mid  b^3=c^3=a^4=b^ab=c^ac= c^{-1}c^b=1 \rangle \cong(\Bbb Z_3\times \Bbb Z_3)\rtimes \Bbb Z_4$.
	\end{itemize}
\end{lem}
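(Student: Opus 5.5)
The sufficiency direction is already available. By Theorem \ref{azhv}, $S_3$ and $Dic_{12}$ are Cayley integral. Lemma \ref{dihedral} covers $D_{12}$, and Lemma \ref{q8z3} covers $Q_8\times\Bbb Z_3$. Lemma \ref{remains} covers $A_4$, $G_{18,4}$, $G_{24,4}$, $G_{24,7}$ and $G_{36,7}$. So the work is the necessity direction. Let $G$ be a non-abelian Cayley $2$-integral $\{2,3\}$-group, not a $p$-group, with $|G|\le 48$. Then $|G|\in\{6,12,18,24,36,48\}$. We filter using the necessary conditions proved so far: subgroups and quotients are Cayley $2$-integral (Lemma \ref{sbg-qoutient}), element orders lie in $\{2,3,4,6,12\}$ (Corollary \ref{2,3,5}), and the Sylow $3$-subgroup is elementary abelian (Lemma \ref{elab}). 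We then treat each order in turn and discard survivors using Table \ref{Table23}.

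Orders $6,12,18,36$ are handled by the preceding lemmas. For order $6$ we get $S_3$. For order $12$ the non-abelian groups are $Dic_{12}$, $A_4$ and $D_{12}$, and all three appear in the list. For order $18=2\cdot 3^2$, Corollary \ref{3k2} gives exactly $G_{18,4}$; this excludes $D_{18}$ and $S_3\times\Bbb Z_3$, and is consistent with Table \ref{Table23}(9),(25). For order $36=3^2\cdot 4$, Corollary \ref{36main} gives exactly $G_{36,7}$.

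For order $24$ we go through the twelve non-abelian groups. $G_{24,1}\cong\Bbb Z_3\rtimes\Bbb Z_8$ is excluded because it has elements of order $24$ or $8$ (also Table \ref{Table23}(6)). $G_{24,3}\cong SL(2,3)$ contains $Q_8$ and has Sylow $3$-subgroup not normal. Since $G_{24,3}\not\cong A_4$, Lemma \ref{akhlaghi1} excludes it, and the same lemma excludes $S_4$ and $A_4\times\Bbb Z_2$. The groups $G_{24,5},G_{24,8},G_{24,10},G_{24,12},G_{24,13},G_{24,14}$ are excluded by the corresponding rows of Table \ref{Table23}. $G_{24,6}=D_{24}$ is excluded by row (26). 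That leaves $G_{24,4}$, $G_{24,7}$ and $G_{24,11}$, which are on the list.

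Order $48$ is the main obstacle. We first reduce with Lemma \ref{akhlaghi1}: since $|G|\neq 12$, $G$ has a normal Sylow $3$-subgroup $P\cong\Bbb Z_3$, so $G=P\rtimes Q$ with $|Q|=16$. By Proposition \ref{non-ab-2groups}, $Q$ is abelian of exponent at most $4$ (Proposition \ref{baseab} together with the order-$12$ bound), or $Q$ is non-abelian of order $16$ and not $D_{16}$ or $QD_{16}$. Let $C=C_Q(P)$, which has index $1$ or $2$ in $Q$. If the index is $1$, then $G=\Bbb Z_3\times Q$ with $Q$ non-abelian, and Lemma \ref{nilpotent} excludes this since $|G|\neq 24$. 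If the index is $2$, the subgroup $P\rtimes\langle y\rangle$ or a quotient typically lands on a forbidden smaller group. Examples are $S_3\times\Bbb Z_4$, $D_{12}\times\Bbb Z_2$ (whose quotient or subgroup structure yields $G_{24,14}$), and $Q_8\times\Bbb Z_3$ inside a larger group. Where this argument does not apply cleanly, we fall back on GAP: list the groups $G_{48,r}$ with a normal $\Bbb Z_3$ and exponent dividing $12$, and check that each either contains a subgroup of order $24$ already excluded above or appears in Table \ref{Table23} (rows (11)--(22)). The main risk is completeness of this order-$48$ case check, so we would organise it by the isomorphism type of $Q$ to make sure no group is missed.
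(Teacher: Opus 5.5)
This is essentially the paper's argument: sufficiency from the earlier lemmas, then an order-by-order elimination using Corollaries \ref{3k2} and \ref{36main}, Lemmas \ref{akhlaghi1} and \ref{nilpotent}, and Table \ref{Table23}, with a finite check at order $48$. Your explicit requirement that a surviving group of order $48$ contain no already-excluded subgroup of order $24$ is exactly what discards groups such as $S_3\times\Bbb Z_2\times\Bbb Z_4$ or $D_8\times S_3$; the paper's final sentence leaves this step implicit.

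There is one slip to fix. Corollary \ref{2,3,5} permits elements of order $8$, so your ``exponent at most $4$''/``exponent dividing $12$'' filter is not justified as stated. Instead, note that an element of order $8$, together with the normal $\Bbb Z_3$, generates either $\Bbb Z_{24}$ or $\Bbb Z_3\rtimes\Bbb Z_8=G_{24,1}$. These are excluded by Corollary \ref{order} and Table \ref{Table23}(6), respectively.
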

\begin{proof}
One direction follows from Corollary \ref{coremain}. Conversely, suppose that $G$ is a Cayley $2$-integral group not given in $(1)$-$(9)$. Then, by Corollary \ref{36main}, $|G|\neq 36$ and so $|G|\in\{18,24,48\}$.  By Table \ref{Table23}, $G$ is not a dihedral group. Also, by Table \ref{Table23}, $G\ncong H$, where  $H:= S_{4}, A_4\times\Bbb Z_2, A_4\times\Bbb Z_3, \Bbb Z_2^4\rtimes\Bbb Z_3, \Bbb Z_4^2\rtimes\Bbb Z_3$, or $ \Bbb Z_3\rtimes\Bbb Z_8$. Furthermore, $G\ncong K:=\textrm{SL}(2,3)$ by Lemma \ref{akhlaghi1}. This implies that if $|G|\neq 48$ then $G$ is one of the groups listed in the rows $(9)$-$(13)$ of Table \ref{Table23}, a contradiction. Hence we may assume that $|G|=48$. Then $G$ has no subgroup isomorphic to $H$ or $K$.Now, by Lemma \ref{akhlaghi1} and Corollary \ref{2,3,5}, the only possibilities for $G$ are the groups given in the rows $(14)$-$(22)$ of Table \ref{Table23}, a contradiction and completing the proof. 
\end{proof}
\begin{pro}\label{2,3}
	Let $G$ be a finite non-abelian $\{2,3\}$-group which is not a $p$-group. Then $G$ is Cayley $2$-integral if and only if $G$ is one of the groups given in Lemma \ref{upto48}.
%	\begin{itemize}
%		\item[(1)] $S_3$
%		\item[(2)] $G_{12,1}\cong Dic_{12}$
%		\item[(3)] $G_{12,3}=\langle a,b\mid   b^2=a^3=(ba)^3=1 \rangle\cong A_4$
%		\item[(4)] $G_{12,4}\cong D_{12}$
%		\item[(5)] $G_{18,4}=\langle a,b,c\mid  a^2=b^3=c^3=(ab)^2=(ac)^2=c^{-1}c^b=1 \rangle\cong (\Bbb Z_3\times \Bbb Z_3)\rtimes \Bbb Z_2$
%		\item[(6)] $G_{24,4}=\langle a,b,c\mid  c^3=b^2a^2=baba^{-1}=a^4=c^ac=c^{-1}c^b=(a^{-1}c)^2a^{-2}=1 \rangle \cong\Bbb Z_3\rtimes Q_8$
%		\item[(7)] $G_{24,11}=\langle a,b,c\mid  c^3=b^2a^2=baba^{-1}=a^4=c^{-1}c^a=c^{-1}c^b=1\rangle\cong Q_8 \times\Bbb Z_3 $
%		\item[(8)] $G_{24,7}=\langle a,b,c\mid b^2=c^3=a^4=c^ac=bb^a=b^cb=1 \rangle\cong\Bbb Z_2\times(\Bbb Z_3\rtimes \Bbb Z_4)$
%		\item[(9)] $G_{36,7}=\langle a,b,c\mid  b^3=c^3=a^4=b^ab=c^ac= c^{-1}c^b=1 \rangle \cong(\Bbb Z_3\times \Bbb Z_3)\rtimes \Bbb Z_4$
%	\end{itemize}
\end{pro}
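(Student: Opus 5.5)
The converse direction is immediate from Corollary \ref{coremain} (equivalently Lemma \ref{upto48}). For the forward direction I will argue by induction on $|G|$. Let $G$ be a non-abelian Cayley $2$-integral $\{2,3\}$-group, not a $p$-group. By Lemma \ref{upto48} we may assume $|G|>48$, and the aim is a contradiction. By Lemma \ref{akhlaghi1}, and since $|G|>12$, $G$ has a normal Sylow $3$-subgroup $P$. By Lemma \ref{elab}, $P\cong\Bbb Z_3^b$. Let $Q$ be a Sylow $2$-subgroup, so $G=P\rtimes Q$. By Proposition \ref{non-ab-2groups} and Proposition \ref{baseab}, $Q$ is either one of the listed non-abelian $2$-groups or $\Bbb Z_2^{r}\times\Bbb Z_4^{s}\times\Bbb Z_8^{t}$.

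\textbf{Step 1: no elements of order $8$ in $Q$.} By Corollary \ref{2,3,5}, every element has order dividing $12$ or $8$. An element of order $8$ cannot centralize a nontrivial element of $P$. So I first show $\exp(Q)\le 4$. If $y\in Q$ has order $8$, consider $\langle P_0,y\rangle$ with $P_0\le P$ a $\langle y\rangle$-invariant subgroup of order $3$ or $9$. Here $y^2$ acts on $P_0$ with order dividing $2$. If $y^4$ acts trivially on $P_0$, then $yy$-powers give an element of order $24$ unless the kernel is trivial, which is impossible. Hence this subgroup is a group of order at most $72$ that contains an element of order $24$, or else it reduces to $\Bbb Z_3\rtimes\Bbb Z_8$, which is excluded by Table \ref{Table23}(6). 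In either case we get a contradiction.

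\textbf{Step 2: use the subgroups $P_0Q_0$ of small order.} For every subgroup $H<G$ and every quotient of $G$, Lemma \ref{sbg-qoutient} together with induction and Lemma \ref{upto48} forces $H$ to be abelian, a $p$-group, or one of the nine groups in Lemma \ref{upto48}. Apply this to $H=PQ_0$, where $Q_0$ ranges over the cyclic or order-$4$ subgroups of $Q$. This gives the following facts:
\begin{enumerate}[(i)]
\item each involution of $Q$ acts on $P$ either trivially or by inversion on the noncentral part (Corollary \ref{3k2} and Lemma \ref{main3k2} exclude $H_1$ and $H_3$);
\item $|[P,q]|\le 9$ for every $q\in Q$;
\item Corollary \ref{36main} bounds $|P|$ whenever an element of order $4$ acts nontrivially.
\end{enumerate}
Combining these with Fitting's Lemma \ref{fit} applied to $Q$ acting on $P$, we get $P=C_P(Q)\times[P,Q]$. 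If $C_P(Q)\neq1$ and $[P,Q]\neq1$, then $G$ contains a subgroup of the form $\Bbb Z_3\times(\Bbb Z_3\rtimes Q_1)$. This is excluded by $G_{18,3}$, $G_{36,6}$ and Table \ref{Table23}(9),(33), or by Lemma \ref{nilpotent} when the action is trivial. So either $G$ is nilpotent, and then Lemma \ref{nilpotent} gives $G\cong Q_8\times\Bbb Z_3$, or $C_P(Q)=1$ and $|P|\le 9$.

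\textbf{Step 3: finish with $|P|\in\{3,9\}$ and $|G|>48$.} Then $|Q|\ge 8$ when $|P|=9$, and $|Q|\ge 32$ when $|P|=3$. I locate a proper subgroup or quotient of order at most $48$ that is not in the list of Lemma \ref{upto48}. For example, take $K=C_Q(P)$, which is normal in $G$. If $K\neq1$, pass to a subgroup $P\times\langle z\rangle$ with $z\in K$ an involution, together with an element acting nontrivially. This produces a group of order $24$ or $36$ that is excluded by the Table (for instance $S_3\times\Bbb Z_4$, $D_{12}\times\Bbb Z_2\cong G_{24,14}$, $G_{24,8}$, $G_{24,10}$). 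If $K=1$, then $Q$ embeds in $\Aut(P)$, which is $\Bbb Z_2$ or $GL(2,3)$, whose $2$-part is $QD_{16}$. This forces $|Q|\le16$ and $|G|\le144$. The remaining cases use subgroups of order $48$ or $72$, excluded via Lemma \ref{upto48} and Table \ref{Table23}(16),(24).

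The main obstacle is Step 3, the bookkeeping of which small subgroup witnesses failure for each configuration of $Q$ acting on $P$. I expect to have to invoke the order-$48$ rows of Table \ref{Table23} repeatedly, and possibly a GAP check for $|G|=72,96,144$.
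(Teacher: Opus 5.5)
\textbf{There is a genuine gap: Step 3, where the real work lies, is not carried out, and the one concrete mechanism you propose for it fails.} Steps 1--2 give a correct intermediate picture (once Step 1 is repaired, see below): $P\cong\Bbb Z_3^b$ is normal, and if $G$ is not nilpotent then $|P|\le 9$ and $C_P(Q)=1$.

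\textbf{Step 3, case $K=C_Q(P)\neq 1$.} The subgroup $P\langle z,q\rangle$ ($z\in K$ an involution, $q$ acting nontrivially) is often on the allowed list, so it gives no contradiction.
For $G=\textrm{Dic}_{12}\times\Bbb Z_2^3$, every such subgroup is $\textrm{Dic}_{12}$ or $\Bbb Z_2\times\textrm{Dic}_{12}\cong G_{24,7}$.
For $G=G_{36,7}\times\Bbb Z_2$, it is $G_{36,7}$ or $G$ itself. In fact every proper subgroup of $G_{36,7}\times\Bbb Z_2$ is Cayley $2$-integral, so no subgroup argument can exclude this group; only a quotient can.
Moreover, for $|P|=3$ nothing you say bounds $|Q|$ (you only know $Q/K\cong\Bbb Z_2$), so GAP checks at orders $72,96,144$ cannot close the case.

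\textbf{The missing idea is to divide out a central involution; this is what the paper's proof is built on.}
Since $K\unlhd Q$ is nontrivial, there is an involution $z\in K\cap Z(Q)$, and it is central in $G$.
Because $[P,Q]\neq 1$ and $[P,Q]\cap\langle z\rangle=1$, the quotient $G/\langle z\rangle$ is non-abelian, not a $p$-group, Cayley $2$-integral, and of order $>24$.
By induction, $G/\langle z\rangle\cong G_{36,7}$.
Then $Q$ is abelian of order $8$, and $G$ has a quotient $\Bbb Z_3\rtimes\Bbb Z_8$ or $G_{18,4}\times\Bbb Z_2$. These are excluded by Table \ref{Table23}(6) and Corollary \ref{36main}, respectively.

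The paper applies this to an involution $x\in Z(Q)$ from the outset. By Corollary \ref{3k2}, $M=P\langle x\rangle\unlhd G$ is $P\times\langle x\rangle$, $S_3$, or $G_{18,4}$. The three cases are closed by the quotient above, by $G\cong S_3\times K$, and by Lemma \ref{36-18}. This route makes your Step 1 and the $GL(2,3)$ analysis unnecessary.

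\textbf{Step 3, case $K=1$.} This case does close quickly.
For $|P|=3$ it gives $|Q|\le 2$ at once.
For $|P|=9$, an element $q\in Q$ of order $4$ would make $P\langle q\rangle$ a non-abelian group of order $36$ in which $q^2$ acts nontrivially, hence not $G_{36,7}$. So $Q$ is an elementary abelian subgroup of $GL(2,3)$, and $|Q|\le 4$.

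\textbf{Step 1 is incorrect as written.} Suppose $y$ has order $8$ and $C_P(y)=1$.
Then $y^4$ acting trivially on $P_0$ yields elements of order $6$, not $24$.
If $y$ acts on $P_0\cong\Bbb Z_3^2$ with order $8$, then $y^2$ acts with order $4$, contrary to your claim.
When $y$ acts on $P_0\cong\Bbb Z_3^2$ with order $4$ or $8$, the group $P_0\langle y\rangle$ has no element of order $24$ and is not $\Bbb Z_3\rtimes\Bbb Z_8$.
The right witness is $P_0\langle y^2\rangle$ for the order-$8$ action, or $P_0\langle y\rangle/\langle y^4\rangle$ for the order-$4$ action. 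Each is a non-abelian group of order $36$ in which an element of order $4$ acts with order $4$, so it is not $G_{36,7}$ and is excluded by Corollary \ref{36main}.
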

\begin{proof}
	Let $\Omega$ be the set of all nine groups $(1)$-$(9)$ given in Lemma \ref{upto48}. If $G \in \Omega$, then $G$ is a Cayley $2$-integral group by Lemma \ref{upto48}. Conversely, suppose that $G \notin \Omega$ is a Cayley $2$-integral group of minimal order. Let $P$ be a Sylow $3$-subgroup and $Q$ be a Sylow $2$-subgroup of $G$. By Lemma \ref{akhlaghi1}, we have that $G = P \rtimes Q$. Let $x \in Z(Q)$ be an involution. Then $M = P \rtimes \langle x \rangle$ is a normal subgroup of $G$, and so we can express $M$ as $M = P_1 \times (P_2 \rtimes \langle x \rangle)$, where $P_1 = C_P(x)$ and $P_2 = [P,x]$, according to Fitting's Lemma. Now, by Corollary \ref{3k2}, we conclude that either $M = P \times \langle x \rangle$, $M \cong S_3$, or $M \cong \mathbb{Z}_3^2 \rtimes \mathbb{Z}_2$. 
	
	First, suppose that $M = P \times \langle x \rangle$. Then $\langle x \rangle \unlhd G$, and the quotient $G/\langle x \rangle$ is either abelian or a Cayley $2$-integral group isomorphic to one of the nine groups specified. Assume first that $G \in \Omega$. Thus, we have $|G/\langle x \rangle| \leq 24$ or $G/\langle x \rangle \cong G_{36,7} \cong \mathbb{Z}_3^2 \rtimes \mathbb{Z}_4$. In the former case, $|G| \leq 48$. Therefore, by Lemmas \ref{upto48}, $G$ is one of the groups $(1)$-$(9)$, which leads to a contradiction. In the latter case, $G \cong \mathbb{Z}_3^2 \rtimes \mathbb{Z}_8$ or $(\mathbb{Z}_3^2 \rtimes \mathbb{Z}_4) \times \mathbb{Z}_2$. The former case is impossible, as $G$ has a Cayley $2$-integral factor group isomorphic to $\mathbb{Z}_3 \rtimes \mathbb{Z}_8$, which contradicts Table \ref{Table23}(6). In the latter case, $G$ contains a factor isomorphic to $(\mathbb{Z}_3^2 \rtimes \mathbb{Z}_2) \times \mathbb{Z}_2$, which must also be Cayley $2$-integral, contradicting Corollary \ref{36main}. Therefore, we may assume that $G/\langle x \rangle$ is abelian. Noting that $[P,Q]\subseteq \langle x\rangle \cap P=1$, we conclude  that $G \cong P \times Q$, and by Lemma \ref{nilpotent}, we obtain that $G \cong \mathbb{Z}_3 \times Q_8 \in \Omega$, leading to a contradiction.
	
	Now,  assume that $M \cong S_3$. Then $P \cong \mathbb{Z}_3$, and the quotient $G/C_G(P)$ is isomorphic to a subgroup of $\text{Aut}(P) \cong \mathbb{Z}_2$. Therefore, we conclude that $G/C_G(P) \cong \mathbb{Z}_2$. If $2$ divides $|C_G(P)|$, then $C_G(P) = P \times K$, where $K \neq 1$ is a Sylow $2$-subgroup of $C_G(P)$. This implies that $K \unlhd G$. Since $x \notin C_G(P)$, we have $x \notin K$, and as $x \in Z(Q)$, we get that $Q = K \times \langle x \rangle$. This implies that $G \cong S_3 \times K$. If $|K| = 2$, then $|G| = 12$, and hence $G \in \Omega$, which is a contradiction. If $|K| \geq 4$, then $Q$ has a subgroup of order $8$, leading $G$ to have a Cayley $2$-integral subgroup $L \times S_3$, where $L \leq K$ has order $4$, which is again a contradiction. Thus, we conclude that $C_G(P) = P$, which implies $G \cong S_3$, a contradiction.
	
	Finally, suppose that $M \cong \mathbb{Z}_3^2 \rtimes \mathbb{Z}_2$. Then $P \cong \mathbb{Z}_3^2$, and $G/M$ is a non-trivial $2$-group. Let $gM \in G/M$ be an involution. Then $g^2 \in M$. Let $L = \langle M, g \rangle$. Then $L = M \langle g \rangle \leq G$ is a Cayley $2$-integral group, has order $36$, and contains $M$. This contradicts Lemma \ref{36-18}.
\end{proof}

\subsection{Non-abelian Cayley $2$-integral $\{2,5\}$-groups.}
\begin{pro}\label{2,5}
	Let $G$ be a finite non-abelian $\{2,5\}$-group which is not a $p$-group. Then $G$ is Cayley $2$-integral if and only if $G$ is one of the following groups
	\begin{itemize}
		\item[(1)] $D_{10}$,
		\item[(2)] $Dic_{20}=\langle x,y\mid x^5=y^4=1, x^y=x^{-1}\rangle\cong\Bbb Z_5\rtimes\Bbb Z_4$.
	\end{itemize}
\end{pro}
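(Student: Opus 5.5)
The easy direction is already in place: $D_{10}$ is Cayley $2$-integral by Lemma \ref{dihedral}, and $Dic_{20}$ by Lemma \ref{remains}(9). For the converse, let $G$ be a non-abelian Cayley $2$-integral $\{2,5\}$-group with $|G|=2^a5^b$, $a,b\geq 1$, and fix a Sylow $5$-subgroup $P$ and a Sylow $2$-subgroup $Q$. By Lemma \ref{elab} and Lemma \ref{sbg-qoutient}, $P$ is elementary abelian. By Corollary \ref{2,3,5}, $G$ has no elements of order $20$ or $40$. The structural strategy is to show that $P\unlhd G$, that $|P|=5$, and that $Q$ is cyclic of order at most $4$. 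Then $G\cong D_{10}$ or $Dic_{20}$, since $\Aut(\Bbb Z_5)\cong\Bbb Z_4$ and $G$ is non-abelian.

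\emph{Step 1: $P$ is normal.} I would argue by minimal counterexample, as in Lemma \ref{akhlaghi1}. Take a minimal normal subgroup $N$; it is elementary abelian because $G$ is solvable by Lemma \ref{solv}. If $N$ is a $5$-group, the argument with $PN/N$ and minimality gives $P\unlhd G$. If $N$ is a $2$-group, then $P$ acts on $N$. If this action were nontrivial, Fitting's Lemma would give a Frobenius subgroup $N_1\rtimes\Bbb Z_5$ with $N_1\cong\Bbb Z_2^4$, hence a subgroup $\Bbb Z_2^4\rtimes\Bbb Z_5=G_{80,49}$, which Table \ref{Table23}(8) rules out. So $P$ centralizes $N$ and $NP=N\times P$. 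An element of order $10$ is allowed, but no element of $G$ may have order $20$. By induction $G/N$ has a normal Sylow $5$-subgroup, so $NP\unlhd G$ and $P$ is characteristic in $NP=N\times P$; hence $P\unlhd G$.

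\emph{Step 2: the shape of $Q$ and of the action.} Now $G=P\rtimes Q$. Every involution $x$ acts on $P$, and Fitting's Lemma gives $P\rtimes\langle x\rangle=C_P(x)\times([P,x]\rtimes\langle x\rangle)$. Corollary \ref{3k2} forbids non-abelian groups of order $2\cdot5^k$ with $k\geq2$ (the only survivor there has $p=3$). Hence $\langle P,x\rangle$ is either abelian, or $|P|=5$ and it is $D_{10}$. If $|P|\geq 25$, then every involution centralizes $P$. Any element $g$ of order $4$ acts on $P$ with $g^2$ central, and non-commuting would give a subgroup $\Bbb Z_5\rtimes\Bbb Z_4$ sitting inside a $5^2$-extension, which I would kill by analysing $\langle P,g\rangle$ of order $4\cdot25$ via the Fitting decomposition. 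One would need $\Bbb Z_5\times Dic_{20}$ or $\Bbb Z_5^2\rtimes\Bbb Z_4$ to be non-$2$-integral; I expect that an explicit four-element set, as in Table \ref{Table23}, does this, and that this is the \textbf{main obstacle}, requiring either a GAP computation or a representation computation like the one in Lemma \ref{main3k2}. Elements of order $8$ acting nontrivially give $\Bbb Z_5\rtimes\Bbb Z_8$ (rows (35), (36)), which is forbidden. So $|P|=5$, and $G/C_G(P)$ embeds in $\Bbb Z_4$.

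\emph{Step 3: pinning down $Q$.} We have $C_G(P)=P\times K$ with $K$ a $2$-group, and since $G$ has no elements of order $20$, every element of $K$ has order at most $2$. Moreover $K\neq 1$ is impossible beyond small cases: if $K$ contains an involution $z$ and $G/C_G(P)\neq 1$, then we get subgroups $\Bbb Z_2\times D_{10}\cong D_{20}$ (row (34)) or $\Bbb Z_2\times Dic_{20}$ (row (37)), both excluded. When $K$ has exponent $2$ and $Q=K\rtimes\langle g\rangle$, I would show that $K$ is trivial in the same way. Therefore $C_G(P)=P$ and $Q\cong G/P$ embeds in $\Aut(\Bbb Z_5)\cong\Bbb Z_4$, so $G\cong D_{10}$ or $Dic_{20}$ (the order-$20$ Frobenius group $\Bbb Z_5\rtimes\Bbb Z_4$ with faithful action being $G_{20,3}$). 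That group must also be excluded; its $\Cay(G,\{a^{\pm1},b^{\pm1}\})$ should have degree $>2$, in line with row (36). This is a further computation that I would need to supply, since it is not explicitly in the tables.
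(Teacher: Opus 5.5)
\textbf{The gap is in Step 3.} The claim that $K\neq 1$ is impossible, i.e.\ that $C_G(P)=P$, is false, and $Dic_{20}$ itself refutes it. In $Dic_{20}=\langle a,b\mid a^{10}=1,\ b^2=a^5,\ bab^{-1}=a^{-1}\rangle$ you have $P=\langle a^2\rangle$ and $C_G(P)=\langle a\rangle\cong\Bbb Z_{10}$, so $K=\langle b^2\rangle\neq 1$.

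Your argument that an involution $z\in K$ produces $D_{20}$ or $\Bbb Z_2\times Dic_{20}$ breaks exactly when the only available $z$ is $g^2$, for some $g$ of order $4$ inverting $P$. In that case $\langle P,g,z\rangle=\langle P,g\rangle\cong Dic_{20}$, and nothing is contradicted. For the same reason ``$Q=K\rtimes\langle g\rangle$'' fails, since $\langle g^2\rangle\le K\cap\langle g\rangle$. Conversely, $C_G(P)=P$ with $|P|=5$ gives only $D_{10}$ and the Frobenius group $G_{20,3}$, never $Dic_{20}$. So, as written, Step 3 either shows that $Dic_{20}$ is not Cayley $2$-integral, contradicting Lemma \ref{remains}(9), or reaches the right list through an invalid inference.

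\textbf{How to repair it.} Split into cases on $Q/K\cong G/C_G(P)$. This quotient is cyclic of order $2$ or $4$, and it is non-trivial because $G$ is non-abelian; $K$ is elementary abelian.

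\emph{Case $|Q/K|=4$.} A preimage $g$ of a generator has order $4$ or $8$, so $\langle P,g\rangle\cong G_{20,3}$ or $G_{40,3}$ (row (36)).

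\emph{Case $|Q/K|=2$.} Every $g\in Q\setminus K$ inverts $P$ and has order $2$ or $4$.
If some such $g$ is an involution, then either $K=1$ and $G\cong D_{10}$, or a non-trivial $z\in C_K(g)$ gives $D_{10}\times\Bbb Z_2\cong D_{20}$ (row (34)).
If all such $g$ have order $4$, then $g^2\in K$ and $\langle P,g\rangle\cong Dic_{20}$. Either $K=\langle g^2\rangle$, so $G\cong Dic_{20}$, or some $z\in C_K(g)\setminus\langle g^2\rangle$ exists, which gives $Dic_{20}\times\Bbb Z_2$ (row (37)).
To see that such a $z$ exists: $g$ acts on $K\cong\Bbb F_2^n$ trivially or as an involution, so $\dim C_K(g)\ge n/2$. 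The only bad case is $n=2$ with $C_K(g)=\langle g^2\rangle$. That forces $Q\cong D_8$, which has involutions outside $K$, contrary to the assumption.

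\textbf{Two further remarks.}
First, the obstacle you flag in Step 2 needs no new computation. An element $g$ of order $4$ cannot centralize a non-trivial element of $P$, since that would give an element of order $20$. So once $g^2$ centralizes $P$, $g$ inverts $P$ and $\langle g^2\rangle$ is central in $\langle P,g\rangle$. The quotient $\langle P,g\rangle/\langle g^2\rangle$ is then a non-abelian group of order $2|P|$, which Corollary \ref{3k2} excludes when $|P|\ge 25$. Hence $Q$ has exponent $2$, centralizes $P$, and $G$ would be abelian.

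Second, you are right that $G_{20,3}$ must be excluded separately, and your set does it. Take $G_{20,3}=\langle a,b\mid a^4=b^5=1,\ b^a=b^2\rangle$ and $S=\{a^{\pm1},b^{\pm1}\}$. The $4$-dimensional irreducible representation gives eigenvalues $(-1\pm\sqrt5)/2$ and $(-1\pm\sqrt{21})/2$, so the degree is $4$.

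The paper's proof is a minimal-counterexample case split on $G/N$ for a minimal normal subgroup $N$. It overlooks this group: its step ``$N=C_G(N)$ implies $G\cong D_{10}$ or $Dic_{20}$'' makes the same conflation of $Dic_{20}$ with $G_{20,3}$. Once Step 3 is repaired, your route (normal Sylow $5$-subgroup, then $|P|=5$, then analysis of $C_G(P)$) is cleaner than the paper's case analysis and handles this case correctly.
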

\begin{proof}
$D_{10}$ and $Dic_{20}$ are Cayley 2-integral by Corollary \ref{coremain}. Let $|G|=2^n5^m$, where $n,m\geq 1$, and let $G$ be a Cayley 2-integral group of minimal order such that $G \ncong D_{10}$ and $G \ncong Dic_{20}$. Let $P$ be a Sylow 2-subgroup and $Q$ be a Sylow 5-subgroup of $G$. Let $N$ be a minimal normal subgroup of $G$. Then, either $G/N \cong D_{10}$, $G/N \cong Dic_{20}$, $G/N$ is a 2-group, $G/N$ is a 5-group, or $G/N$ is abelian.

First, assume $G/N \cong D_{10}$ or $G/N \cong Dic_{20}$.

Suppose $N$ is a 5-group. Then $N \leq Q$, which means we have $G = Q \rtimes P$. Note that $Q$ is an elementary abelian 5-group, according to Lemma \ref{elab}. By Fitting's Lemma, we have $Q = [Q, P] \times C_Q(P)$. Based on the structure of $G/N$, we find that $C_Q(P) \leq N$. On the other hand, since $C_Q(P) \unlhd G$, it follows that either $C_Q(P) = 1$ or $C_Q(P) = N$. In the first case, $G = [Q, P] \rtimes P$, leading to a quotient isomorphic to $\mathbb{Z}_5^2 \rtimes \mathbb{Z}_2$ or $\mathbb{Z}_5^2 \rtimes \mathbb{Z}_4$, which is impossible by Lemma \ref{main3k2}. In the latter case, we have $G = N \times ([Q, P] \rtimes P) \cong N \times G/N$, resulting in a quotient isomorphic to $\mathbb{Z}_5 \times G/N$, which is also impossible by Lemma \ref{main3k2}.

Now let $N$ be a 2-group. Then $G$ has a proper normal subgroup $M$ such that $N < M$ and $M = QN \cong N \rtimes Q$. Thus, $M = C_N(Q) \times ([N,Q] \rtimes Q)$. As $C_N(Q) = Z(M)$ and $N$ is a minimal normal subgroup of $G$, we find that either $C_N(Q) = 1$ or $C_N(Q) = N$. In the first case, by induction, we reach a contradiction. Thus, we conclude that $M = N \times Q$. Now, by considering the group $G/Q \cong P$, we conclude that $|N| = 2$. Therefore, $P \cong \mathbb{Z}_2^2$ or $C_4$ when $G/N \cong D_{10}$, and $P \cong \mathbb{Z}_8$ or $\mathbb{Z}_2 \times \mathbb{Z}_4$ when $G/N \cong Dic_{20}$. Consequently, we have $G \in \{ \mathbb{Z}_2 \times D_{10} \cong D_{20}, Dic_{20}, \mathbb{Z}_2 \times Dic_{20}, \mathbb{Z}_5 \rtimes \mathbb{Z}_8 \}$. By Table \ref{Table23}, none of those groups are Cayley 2-integral, except for $Dic_{20}$, resulting in a final contradiction.
%%%%%%%%%%%%%%%%%%%%%%%%%%%%%%%%%%%%%

Now assume that \(G/N\) is a \(2\)-group. Then we can conclude that \(Q = N\) must be a \(5\)-group. Let \(N < M\) be a normal subgroup of \(G\) with order \(2|N|\). It follows that \(M = N \rtimes L\), where \(L \cong \mathbb{Z}_2\). By applying Fitting's Lemma, we can express \(M\) as \(M = ([N, L] \rtimes L) \times C_N(L) \cong (\mathbb{Z}_5^b \rtimes \mathbb{Z}_2) \times \mathbb{Z}_5^a\) for some integers \(a\) and \(b\). Given the minimality of \(N\), either \(a = 0\) or \(b = 0\).

If \(a = 0\), by Table \ref{Table23}(7), we find that \(b = 1\). This leads us to realize that \(G/C_G(N)\) is a subgroup of \(\mathbb{Z}_4\). If \(N < C_G(N)\), then the Sylow \(2\)-subgroup of \(C_G(N)\), which we denote as \(K\), is normal. Assume \(K_0 \leq K\) is a minimal normal subgroup of \(G\). Consequently, the group \(G/K_0\) will have an order divisible by \(10\), considering \(G\) is not abelian. Note that \(G/K_0\) is not abelian; otherwise, \([N, P] = [Q, P] \subseteq K_0 \cap Q = K_0 \cap Q = 1\) and thus \(G\) would be nilpotent. Since $G$ does not have any element of order $20$ we deduce that \(G\) is abelian. Therefore, by induction, we can assert that \(G/K_0 \cong D_{10}\) or \(Dic_{20}\), a situation we have examined previously. Hence, we may take the assumption that \(N = C_G(N)\), which implies \(G \cong D_{10}\) or \(G \cong Dic_{20}\); this leads to a contradiction. Therefore, we confirm that \(b = 0\).

As a result, \(G/L\) is a group whose order is also divisible by \(10\). If \(G/L\) is abelian, then $[P,Q]\subseteq Q\cap L=1$, and so \(G\) is nilpotent, meaning \(G = P \times Q\). Since \(G\) does not have any element of order \(20\), it also does not have any element of order \(4\), and hence \(G\) must be elementary abelian, which is a contradiction. Given the minimality of \(G\), we conclude that \(G/L \cong D_{10}\) or \(Dic_{20}\), both of which we have previously discussed.

Next, we explore the case where \(G/N\) is a \(5\)-group, implying that \(N=P\) is a \(2\)-group. In this scenario, \(G\) has a subgroup \(N < M\) with order \(5|N|\). Thus, we can express \(M\) as \(M = N \rtimes L\), where \(L \cong \mathbb{Z}_5\). Applying Fitting's Lemma again, we find \(M = ([N, L] \rtimes L) \times C_N(L) \cong (\mathbb{Z}_2^b \rtimes \mathbb{Z}_5) \times \mathbb{Z}_2^a\) for integers \(a\) and \(b\). Given the minimality of \(N\), we conclude that either \(a = 0\) or \(b = 0\). If \(b = 0\), then the order of \(G/L\) must be divisible by \(10\), as otherwise \(G=M\) is abelian. If \(G/L\) is abelian, then \([P,Q]=[N, Q]\subseteq L\cap N=L\cap P=1\), and so \(G\) is nilpotent and hence abelian, leading to a contradiction. Thus, by induction, we find that \(G/L \cong D_{10}\) or \(Dic_{20}\), as previously discussed.

We may assume \(b = 0\), which leads us to conclude that \(M\) and hence \(G\) is a Frobenius group, with \(N\) serving as the Frobenius kernel. Consequently, \(G/N\) must be cyclic, and we can express \(G\) as \(M \cong N \rtimes L \cong \mathbb{Z}_2^b \rtimes \mathbb{Z}_5\). Since \(N\) is a minimal normal subgroup, we find that \(b = 4\). However, this contradicts Table \ref{Table23}(8), which states that the group is not Cayley \(2\)-integral.

Finally, assume that \( G/N \) is abelian and its order is divisible by 10. Let \( M/N \) be the Sylow \( p \)-subgroup of \( G/N \), where \( p \) does not divide \( |N| \). Then, either \( M \) is abelian, or by induction, it is isomorphic to \( D_{10} \) or \( Dic_{20} \). Assume \( M \) is abelian. Then \( M = P \times N \) or \( M = Q \times N \), which implies that one of the Sylow subgroups not containing \( N \) is normal. On the other hand, we know that the Sylow subgroup containing \( N \) is also normal. It follows that \( G \) is nilpotent. Since \( G \) does not have any elements of order 20, we conclude that \( G \) must be abelian, which leads to a contradiction. Therefore, we may assume \( M \cong D_{10} \) or \( Dic_{20} \). Thus, \( |N| = 5 \) and \( P \) is a cyclic group of order 2 or 4. Since \( G/N \) is abelian, if \( |P| = 4 \), then \( G/N \) would have an element of order 20, which is not possible. So, we have \( |P| = 2 \), and hence \( G = Q \rtimes P \cong \mathbb{Z}_5^m \rtimes \mathbb{Z}_2 \). Thus, \( G = \mathbb{Z}_5^a \times (\mathbb{Z}_5^b \times \mathbb{Z}_2) \) for some integers \( a \) and \( b \). Following the same reasoning as discussed in the previous paragraph, we find that \( G \cong D_{10} \), leading to a contradiction.
    \end{proof}

\textbf{Proof of Theorem \ref{main}} It is enough to combine Corollary \ref{key} and Propositions \ref{baseab}, \ref{2,3} and \ref{2,5}.

%In the first case, $G\cong \Bbb Z_2\times D_{10}\cong D_{20}$ which is impossible by Lemma   \ref{dihedral}. In the later case,  $G\cong Dic_{20}$  which is impossible. 

%Now suppose that $G/N\cong Dic_{20}$. First let $N$ is a $5$-group. Then $Q$ is elementary abelian $5$-group and $P=\langle x\rangle\cong\Bbb Z_4$ acts on $Q$. Thus $G=C_Q(P)\times ([Q,P]\rtimes P)$. As $C_Q(P)<N$, we get $C_Q(P)=1$ or $N$. If $C_Q(P)=N$ then $G$ has a subgroup isomorphic to $H\cong \Bbb Z_5\times (\Bbb Z_5\rtimes\Bbb Z_4)$ which is not possible, because $H$ has an element of order $20$, a contradiction by Corollary \ref{order}. Thus $C_Q(P)=1$. Take $H=N\rtimes\langle x^2\rangle$. Then $H=C_N(x^2)\times ([N,\langle x^2\rangle]\rtimes \langle x^2\rangle)$. So by minimality of $G$, either $[N,\langle x^2\rangle]=1$ or $H\cong D_{10}$. In the first case, $x^2\in Z(G)$ and $G/\langle x^2\rangle\cong Q\rtimes\langle x^2\rangle$ is a generalized dihedral group where $|Q|\geq 25$, a contradiction. This implies that $\Bbb Z_5^2\rtimes\Bbb Z_2$ is a Cayley $2$-integral group, a contradiction by Table \ref{Table23}(8).  So $H\cong D_{10}$ and therefore $|N|=5$. Thus $G$ has a subgroup isomorphic to $D_{10}\times\Bbb Z_5$, a contradiction Table \ref{Table23}(7). Thus $N$ is a $2$-group. ???


\begin{thebibliography}{99}
	
	\bibitem{AAFW}
	A. Abdollahi, M. Arezoomand, T. Feng and S. Wang,
	On $2$-integral Cayley graphs,
	Ars Math. Contemp. \textbf{26} (2026), \#P3.06.
	
	\bibitem{AV}
	A. Abdollahi and E. Vatandoost,
	Which Cayley graphs are integral?,
	Electron. J. Combin. \textbf{16} (2009), 1--17.
	
	\bibitem{AJ}
	A. Abdollahi and M. Jazaeri,
	Groups all of whose undirected Cayley graphs are integral,
	European J. Combin. \textbf{38} (2014), 102--109.
	
	\bibitem{Azhvan}
	A. Ahmadi, J. P. Bell and B. Mohar,
	Integral Cayley graphs and groups,
	SIAM J. Discrete Math. \textbf{28(2)} (2014), 685--701.
	
	\bibitem{AT}
	M. Arezoomand and B. Taeri,
	On the characteristic polynomial of $n$-Cayley digraphs,
	Electron. J. Combin. \textbf{20(3)} (2013), \#P.57.
	
	\bibitem{Berkovich1}
	Y. Berkovich,
	\emph{Groups of Prime Power Order}, Vol. 1,
	Walter de Gruyter, Berlin, 2008.
	
%	\bibitem{Berkovich2}
%	Y. Berkovich and Z. Janko,
%	\emph{Groups of Prime Power Order}, Vol. 2,
%	Walter de Gruyter, Berlin, 2008.
	
	\bibitem{Cvet}
	D. Cvetković, M. Doob and H. Sachs,
	\emph{Spectra of Graphs: Theory and Applications},
	3rd edition, Johann Ambrosius Barth Verlag, Heidelberg--Leipzig, 1995.
	
	\bibitem{DS}
	P. Diaconis and M. Shahshahani, Generating a random permutation with random transpositions, Z. Wahsch. Verw. Gebiete, \textbf{57} (1981), 159--179.
	\bibitem{Gap}
	The GAP Group, GAP-Groups, Algorithms, and Programming, Version 4.16.0; 2026. (https://www.gap-system.org)

	
		\bibitem{HS}
	F. Harary and A. J. Schwenk,
	Which graphs have integral spectra?,
	in: \emph{Graphs and Combinatorics}
	(Proc. Capital Conf., George Washington Univ., Washington, D.C., 1973),
	Lecture Notes in Math., Vol. 406,
	Springer, Berlin (1974), 45--51.
	
	\bibitem{Herzog}
	M. Herzog,
	On finite simple groups of order divisible by three primes only,
	J. Algebra \textbf{10} (1968), 383--388.
	

	
%	\bibitem{Hof}
%	A. J. Hoffman,	On the polynomial of a graph,
%	Amer. Math. Monthly \textbf{70} (1963), 30--36.
	
	\bibitem{HLM}
	X. Huang, L. Lu and K. Monius, Splitting fields of mixed Cayley graphs over abelian groups, ¨
	J. Algebr. Comb. \textbf{58} (2023), 681--693,
	\bibitem{Issacs}
	I. M. Isaacs,
	\emph{Finite Group Theory},
	Graduate Studies in Mathematics, Vol. 92,
	American Mathematical Society, Providence, RI, 2008.
	
	\bibitem{JL}
	G. James and M. Liebeck,
	\emph{Representations and Characters of Groups},
	2nd ed.,
	Cambridge University Press, Cambridge, 2001.
	
	\bibitem{KS}
	W. Klotz and T. Sander,
	Integral Cayley graphs over abelian groups,
	Electron. J. Combin. \textbf{17} (2010), 1--17.
	
	\bibitem{LZ}
	X. Liu and S. Zhou, Eigenvalues of Cayley graphs, Electron. J. Comb. \textbf{29} (2022), research
	paper p2.9, 164 pp.
	
	\bibitem{LM}
	L. Lu and K. M\"{o}nius,
	Algebraic degree of Cayley graphs over abelian groups and dihedral groups,
	J. Algebr. Comb. \textbf{57} (2023), 585--601.
	
	\bibitem{MSS}
	K. Mönius, J. Steuding and P. Stumpf,
	Which graphs have non-integral spectra?,
	Graphs Combin. \textbf{34} (2018), 1507--1518.
	
	\bibitem{Robinson}
	D. J. S. Robinson,
	\emph{A Course in the Theory of Groups},
	2nd ed.,
	Graduate Texts in Mathematics, Vol. 80,
	Springer-Verlag, New York, 1996.
	
	\bibitem{SM}
N. Sripaisan and Y. Meemark, Algebraic degree of spectra of Cayley hypergraphs, Discrete
Appl. Math. \textbf{316} (2022), 87--94.

\bibitem{WAF}
S. Wang, M. Arezoomand and T. Feng, Algebraic degrees of quasi-abelian semi-Cayley digraphs, Discrete Math. \textbf{347} (2024) 114178.

\bibitem{WYF}
Y. Wu, J. Yang and L. Feng, Algebraic degrees of 2-Cayley digraphs over abelian groups,
Ars Math. Contemp. \textbf{24} (2024) \#P2.02.	
\end{thebibliography}
\end{document}